\definecolor{gray}{gray}{0.4}
\DeclareMathOperator{\spin}{spin}
\DeclareMathOperator{\divi}{div}
\DeclareMathOperator{\OG6}{OG6}
\DeclareMathOperator{\og}{OG10}
\DeclareMathOperator{\Hom}{Hom}
\DeclareMathOperator{\Ker}{Ker}
\DeclareMathOperator{\Sgn}{sgn}
\DeclareMathOperator{\Sign}{sign}
\DeclareMathOperator{\NS}{NS}
\DeclareMathOperator{\T}{T}
\DeclareMathOperator{\rank}{rk}
\DeclareMathOperator{\id}{id}
\DeclareMathOperator{\rk}{rk}
\DeclareMathOperator{\Aut}{Aut}
\DeclareMathOperator{\Mon}{Mon}
\DeclareMathOperator{\bL}{\mathbf{L}}
\DeclareMathOperator{\bO}{\mathrm{O}}
\DeclareMathOperator{\A}{\mathbf{A}}
\DeclareMathOperator{\D}{\mathbf{D}}
\DeclareMathOperator{\E}{\mathbf{E}}
\DeclareMathOperator{\h}{\mathbf{H}}
\DeclareMathOperator{\K}{\mathbf{K}}
\DeclareMathOperator{\U}{\mathbf{U}}
\DeclareMathOperator{\bR}{\mathbf{R}}
\DeclareMathOperator{\bLambda}{\boldsymbol{\Lambda}}
\title[Nonsymplectic automorphisms of prime order on O'Grady's sixfolds]{Nonsymplectic automorphisms of prime order \\ on O'Grady's sixfolds}
\author{Annalisa Grossi}
\address{Fakultät für Mathematik, Technische Universität Chemnitz, Reichenhainer Str. 39, 09126 Chemnitz, Germany}
\email{annalisa.grossi@math.tu-chemnitz.de}
\urladdr{https://annalisagrossi92.wixsite.com/home}
\date{\today}
\subjclass[2020]{%
	14J42 
	(%
	14J50 
	)}
\keywords{Irreducible holomorphic symplectic manifolds, nonsymplectic automorphisms, effective nonsymplectic isometries}
\begin{document}

\maketitle

\begin{abstract}	
We classify nonsymplectic automorphisms of prime order on irreducible holomorphic symplectic manifolds of O'Grady's 6-dimensional deformation type. More precisely, we give a classification of the invariant and coinvariant sublattices of the second integral cohomology group.
		
\end{abstract}
\section{Introduction}\label{Introduction}
\subsection{Background}
  Irreducible holomorphic symplectic manifolds \(X\) are simply connected compact K\"{a}hler manifolds carrying a nowhere degenerate holomorphic symplectic form \(\sigma_X\) which spans \(H^{0}(X, \Omega_X^{2})\).
 
In dimension 2, irreducible holomorphic symplectic manifolds are K3 surfaces. Fujiki \cite{fujiki1983primitively} and Beauville \cite{beauville1983varietes} found examples in higher dimensions: more precisely the Hilbert scheme of \(n\) points on a K3 surface and the generalized Kummer manifold in the sense of Beauville \cite{beauville1983varietes} are irreducible holomorphic symplectic manifolds of dimension \(2n\). Manifolds which are deformation equivalent to the Hilbert scheme and to the generalized Kummer manifold are called manifolds of K3\(^{[n]}\) type and of Kum\(_n\) type respectively. 

Mukai \cite{mukai1984symplectic} discovered a symplectic form on moduli spaces of sheaves on symplectic surfaces assuming some conditions on them. However he proved that all nonsingular irreducible holomorphic symplectic manifolds obtained in this way were a deformation of known examples. The singular ones admit a resolution of singularities which is irreducible holomorphic symplectic only in two cases discovered by O'Grady: one in dimension 6 \cite{o2000new} and one in dimension 10 \cite{o1997desingularized}. Manifolds that are deformation equivalent to O'Grady's sixfold and to O'Grady's tenfold are called manifolds of \(\OG6\) type and manifolds of \(\og\) type respectively.

\subsection{Automorphisms of irreducible holomorphic symplectic manifolds}\label{Automorphisms of irreducible holomorphic symplectic manifolds}

An automorphism of an irreducible holomorphic symplectic manifold \(X\) is \textit{symplectic} if its pullback acts trivially on \(\sigma_X\). An automorphism is nonsymplectic if its pullback acts nontrivially on the  space \(H^{2,0}(X)=\mathbb{C}\sigma_X\).
A cyclic group \(G \subset \Aut(X)\) is called \textit{symplectic} if it is generated by a symplectic automorphism.

Automorphisms of irreducible holomorphic symplectic manifolds can be classified studying the induced action on \(H^{2}(X,\mathbb{Z})\) which carries a lattice structure provided by the Beauville--Bogomolov--Fujiki quadratic form. A \textit{marking} is an isometry ~ \(\eta \colon H^{2}(X,\mathbb{Z}) \rightarrow L\) where \(L\) is a lattice; the pair \((X,\eta)\) is called a \textit{marked pair}. If \((X,\eta)\) is a marked pair an isometry \(\varphi \in \bO(L)\) is \textit{symplectic} if \(\varphi \otimes \mathbb{C}\in \bO(L \otimes \mathbb{C})\) acts trivially on \(\eta(\sigma_X)\), and it is \textit{nonsymplectic} if \(\varphi \otimes \mathbb{C}\in \bO(L \otimes \mathbb{C})\) acts nontrivially on the  space \(\mathbb{C}\eta(\sigma_X)\).
A cyclic group \(G \subset \bO(L)\) is called \textit{nonsymplectic} if \(G\) is generated by a nonsymplectic isometry.

We are interested in the image of the following representation map
\begin{equation}\label{main rep map}
\eta_*\colon \Aut(X) \rightarrow \bO(L), \quad f \mapsto \eta \circ f^*\circ \eta^{-1}.
\end{equation} If an isometry \(\varphi \in \bO(L)\) and there exists \(g \in \Aut(X)\) such that \(\eta_{*}(g)=\varphi\), then \(\varphi\) is \textit{effective}. A group \(G \subset \bO(L)\) is called \textit{effective} if its elements are effective.

The aim of this paper is to study effective nonsymplectic groups \(G \subset\bO(L)\) of prime order on manifolds of \(\OG6\) type.

The global Torelli theorem for K3 surfaces, due to Piatetski-Shapiro--Shafarevich, allows us to reconstruct automorphisms of a K3 surface \(S\) starting from Hodge isometries of \(H^{2}(S,\mathbb{Z})\) which preserve the K\"{a}hler cone. Huybrechts~\cite{huybrechts2011global}, Markman~\cite{markman2011survey} and Verbitsky~\cite{verbitsky2013mapping} (see also \cite{verbitsky2013mapping.errata}) formulated similar results of Torelli type for irreducible holomorphic symplectic manifolds.

Recently Mongardi--Rapagnetta \cite{MROG6mon} computed the monodromy group for manifolds of ~ \(\OG6\) type and due to the features of this group, the global Torelli theorem \cite[Theorem 1.3]{markman2011survey} holds in a stronger form for \(\OG6\) type manifolds, namely a necessary and sufficient condition to have a bimeromorphic map between two manifolds of \(\OG6\) type is to have a Hodge isometry of the second integral cohomology.

 Classifying finite groups of automorphisms \(G\) of a certain deformation type of irreducible holomorphic symplectic manifolds can mean one of the following:
	\begin{itemize}
		\item[(1)]classifying invariant and coinvariant sublattices of the induced action of \(G\) in \(H^2(X,\mathbb{Z})\) up to isometry;
		\item[(2)] classifying the connected components of the moduli space of pairs \((X,G)\).
	\end{itemize}
	In general, classification (2) is finer than (1).
This paper deals with level \((1)\) of classification of nonsymplectic groups \(G \subset \Aut(X)\) on manifolds of ~ \(\OG6\) type.

In the case of manifolds of K3\(^{[2]}\) type the symplectic automorphisms are treated by Camere \cite{camere2012symplectic} and Mongardi \cite{mongardi2012symplectic}; the study of nonsymplectic automorphisms was started by Beauville \cite{beauville2011antisymplectic} and continued by Ohashi--Wandel \cite{ohashi2013non}, Boissière--Camere--Mongardi--Sarti \cite{BCMS16}, Boissière--Camere--Sarti \cite{boissiere2016classification}, Camere--G. Kaputska--M. Kaputska--Mongardi \cite{camere2017verra}; furthermore Boissière--Nieper-Wi\ss{}kirchen--Sarti \cite{boissiere2013smith} describe the fixed locus of these automorphisms. Camere--Cattaneo--Cattaneo \cite{Camere-Catt-Catt} study nonsymplectic involutions of ~ K3\(^{[n]}\) type manifolds and Camere--Cattaneo \cite{camere2018non} study nonsymplectic automorphisms of ~ K3\(^{[n]}\) type manifolds, where \(n \geq 3\). Moreover Joumaah \cite{joumaah2016non}, building on a work by Ohashi--Wandel \cite{ohashi2013non}, gives a criterion to find the classification (2) in the case of involutions on manifolds of ~ K3\(^{[n]}\) type.

 The study of automorphisms of generalized Kummer manifolds was started by Mongardi--Tari--Wandel \cite{MTW18} and continued by Boissi\`{e}re--Nieper-Wi\ss{}kirchen--Tari \cite{boissiere2019some} and by Brandhorst--Cattaneo \cite{brandhorst2020prime}.
 
 Recently the author together with Onorati and Veniani \cite{GOV20} classified symplectic birational transformations on manifolds of \(\OG6\) type in the case of finite cyclic groups, hence this paper completes the classification of automorphisms of manifolds of \(\OG6\) type.
The classification of nonsymplectic automorphisms on manifolds of \(\og\) type was started by Brandhorst--Cattaneo \cite{brandhorst2020prime}, and recent progress by Onorati \cite{onorati2020monodromy} about the monodromy group and the wall divisors for this deformation class constitutes a starting point for the study of the symplectic case.

\subsection{Contents of the paper} 

In \autoref{Automorphisms of irreducible holomorphic symplectic manifolds} we give a summary of basic results about irreducible holomorphic symplectic manifolds and we introduce the main tools to approach the study of automorphisms. In \autoref{preliminaries} we give basic notions of lattice theory and we recall the properties of the second integral cohomology of an irreducible holomorphic symplectic manifold. If \(X\) is an irreducible holomorphic symplectic manifold the second integral cohomology group is equipped with an integral quadratic form called \textit{Beauville--Bogomolov--Fujiki quadratic form}. 
With this form \(H^{2}(X,\mathbb{Z})\) is a lattice of signature \((3, b_2(X)-3)\).  
If~\(X\) is a manifold of \(\OG6\) type then \(H^{2}(X,\mathbb{Z})\) is isomorphic to the rank 8 and signature \((3,5)\) lattice \(\U^{\oplus 3} \oplus [-2]^{\oplus 2}\) (see \cite{rapagnetta2007topological}) and we denote this lattice by \(\bL\) throughout the paper. If a group \(G\) acts on \(\bL\) then the invariant and coinvariant sublattices are denoted by \(\bL^G\) and \(\bL_G\) respectively.

In \autoref{Invariant and coinvariant lattices in lambda} in order to obtain the classification of invariant and coinvariant lattices of \(\bL\) we classify isometries of prime order of the smallest unimodular lattice in which \(\bL\) embeds, namely \(\bLambda=\U^{\oplus5}\). 
We denote by \(\bLambda^G\) and \(\bLambda_G\) respectively the invariant and the coinvariant sublattices of \(\bLambda\) with respect to the action of a subgroup \(G \subset \bO(\bLambda)\). Taking into account \autoref{k3lattice} we determine which pairs of \(p\)-elementary lattices can represent the invariant and the coinvariant sublattices with respect to an action of a group \(G \subset \bO(\bLambda)\) of prime order \(p\).
We give the first crucial result of this paper. 

\begin{theorem}\label{thm resume lambda}
	Let \(G \subset \bO(\bLambda)\) be a subgroup of prime order \(p\).
	If either \(p = 2\) and \(\Sgn(\bLambda_G)=(2, \rk(\bLambda_G)-2)\), or \(p=2\) and \(\Sgn(\bLambda_G)=(3, \rk(\bLambda_G)-3)\), or \(p \in \{3,5,7\}\) and \(\Sgn(\bLambda_G)=(2, \rk(\bLambda_G)-2)\), then the pair \((\bLambda^G,\bLambda_G)\) appears in \autoref{tab:Lambda} on page \(11\).
\end{theorem}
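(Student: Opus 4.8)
The plan is to translate the statement into lattice arithmetic and then constrain an \emph{arbitrary} such $G$ until its invariant and coinvariant lattices are forced into the table (this is the constraint direction, not a realization of the table entries). Let $g$ generate $G$ and write $S \colonequals \bLambda^G$ and $T \colonequals \bLambda_G$. Since $\bLambda = \U^{\oplus 5}$ is unimodular and $S = T^{\perp}$, the lattices $S$ and $T$ are mutually primitive orthogonal complements, so $\bLambda$ is an overlattice of $S \oplus T$ and the gluing theory furnishes an anti-isometry $\gamma \colon A_S \to A_T$ of discriminant groups realizing $q_S \cong -q_T$. The first task is to pin down the admissible invariants of $S$ and $T$.

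First I would record the numerical constraints forced by $g$ having prime order $p$. On $T \otimes \mathbb{Q}$ the isometry $g$ acts with no nonzero fixed vectors and minimal polynomial $\Phi_p$, the $p$-th cyclotomic polynomial; hence $T \otimes \mathbb{Q}$ is a $\mathbb{Q}(\zeta_p)$-vector space and $(p-1) \mid \rk(T)$. Writing $\Sgn(T) = (s_+, s_-)$, the hypothesis fixes $s_+ \in \{2,3\}$, and since $S \oplus T$ has finite index in $\bLambda$ we get $\Sgn(S) = (5 - s_+, 5 - s_-)$; this leaves only finitely many candidate ranks. The decisive structural input is that $S$ and $T$ are $p$-elementary: because $\Phi_p(1) = p$, one checks $p\,\bLambda \subseteq S \oplus T$, so the glue group is killed by $p$ and $A_S \cong A_T \cong (\mathbb{Z}/p\mathbb{Z})^{a}$ for some $a \ge 0$. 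This places the problem squarely in the regime governed by \autoref{k3lattice}.

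With the signatures fixed and $p$-elementarity established, I would enumerate the finitely many possibilities for the length $a = \ell(A_S)$, subject to $a \le \min(\rk S, \rk T)$ together with the sign and existence constraints from the Rudakov--Shafarevich/Nikulin classification of even $p$-elementary lattices (including, for $p = 2$, the parity invariant $\delta$ distinguishing even discriminant forms from odd ones). For each admissible triple $(\Sgn(S), a, \delta)$, Nikulin's uniqueness criterion shows the genus contains a single isometry class in the ranges permitted by the signature hypotheses, so the invariant $S$ is determined; the coinvariant $T$ is then the even $p$-elementary lattice of complementary signature and length $a$ carrying $-q_S$, again unique in this range. Listing the surviving pairs $(S,T)$ and comparing each to \autoref{tab:Lambda} closes the argument.

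The main obstacle I anticipate is the careful application of Nikulin's existence and uniqueness theorems near the boundary of their validity: one must check that each candidate $(\Sgn(S), a)$ compatible with the hypotheses is actually realized by a lattice, that the discriminant form genuinely determines the genus (so that no two distinct isometry classes collapse onto the same table entry), and---in the delicate case $p = 2$---that $\delta$ is correctly computed on both sides and compatible with the gluing $\gamma$. The signature restrictions in the statement (positive part $2$ or $3$) are precisely what keeps these enumerations finite and the uniqueness assertions valid, so tracking them through every step is essential.
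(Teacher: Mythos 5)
Your skeleton --- \(p\)-elementarity of \(S=\bLambda^G\) and \(T=\bLambda_G\) via \(p\bLambda\subseteq S\oplus T\), the divisibility \((p-1)\mid\rk(T)\) from the \(\mathbb{Q}(\zeta_p)\)-structure, signature bookkeeping, and enumeration of \(p\)-elementary genera with Nikulin/Rudakov--Shafarevich existence and uniqueness --- is the same as the paper's, but your sieve is too coarse for odd \(p\), and the missing constraints are precisely what cuts the candidate list down to \autoref{tab:Lambda}. First, your length bound \(a\le\min(\rk S,\rk T)\) is strictly weaker than the bound the paper takes from \autoref{Lemma su sottogruppi di ordine p}: the glue group \((\mathbb{Z}/p\mathbb{Z})^{a}\) satisfies \(a\le m\) where \(m(p-1)=\rk(\bLambda_G)\). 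Concretely, for \(p=3\) the lattice \(T=\U(3)^{\oplus2}\) has signature \((2,2)\) and length \(a=4\le\min(6,4)\); it exists, is unique in its genus (indefinite and \(3\)-elementary), embeds primitively in \(\bLambda\) with complement in the genus of \(\U\oplus\U(3)^{\oplus2}\), and so passes every test you list --- yet it is not in \autoref{tab:Lambda}, being excluded only by \(a\le m=2\). Likewise \(\h_5^{\oplus2}\) for \(p=5\) survives your sieve with \(a=2>m=1\). Second, even with the sharp length bound, for \(p=5\) the candidate \(T=\U^{\oplus2}\) (so \(a=0\), \(\rk(T)=4=p-1\), signature \((2,2)\), complement \(\U^{\oplus3}\)) satisfies \emph{all} of your constraints, but carries no fixed-point-free isometry of order \(5\): the paper excludes it via \autoref{discr divisibile per p alla p-2}, which forces \(d/p^{p-2}\) to be a rational square for a rank \(p-1\) lattice with a nontrivial order-\(p\) action, and \(1/125\) is not a square. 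Your proposal contains no tool playing this role, so as written it would output pairs such as \((\U^{\oplus3},\U^{\oplus2})\) for \(p=5\) that do not appear in the table, and the asserted conclusion fails.

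Two smaller points. You cite \autoref{k3lattice} but never use it; in the paper it is the mechanism (together with the Artebani--Sarti and Artebani--Sarti--Taki classifications on K3 surfaces) by which the surviving odd-\(p\) candidates are certified to actually occur, while for \(p=2\) realizability comes for free from gluing \(-\id\) on \(\bLambda_G\) with \(\id\) on \(\bLambda^G\); your purely ``constraint direction'' reading would be formally adequate only if your candidate list were contained in the table, which by the examples above it is not. Finally, your appeal to uniqueness in the genus is valid for the indefinite entries, but several table entries are definite (e.g.\ \(\bLambda_G=[2]^{\oplus2}\), \(\A_2\), \([2]^{\oplus3}\)), where the indefinite \(p\)-elementary uniqueness theorem does not apply and one must instead argue as the paper does via \cite[Theorem 1.14.2]{Nik79}.
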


 In \autoref{orders of non sympl} we determine the possible prime orders of nonsymplectic groups of isometries on manifolds of \(\OG6\) type and in \autoref{non sympl autom are effective} we find that, for manifolds of \(\OG6\) type, nonsymplectic groups \(G \subset \bO(\bL)\) of prime order are effective. In this way we obtain the classification of invariant and coinvariant sublattices of \(\bL\) with respect to effective nonsymplectic groups \(G \subset \bO(\bL)\) of prime order on a manifold of \(\OG6\) type. In \autoref{Nonsymplectic automorphisms I: a classification in dimension six} we finally come to the main result of this paper.
\begin{theorem}\label{thm resume}
	Let \(X\) be a manifold of \(\OG6\) type and let \(G \subset \bO(\bL)\) be a nonsymplectic group of prime order \(p\). Then \(G\) is an effective nonsymplectic group if and only if \(|G| \in \{2,3,5,7\}\), and the pair \((\bL^G,\bL_G)\) appears in \autoref{tab:L} on page \(17\). 
\end{theorem}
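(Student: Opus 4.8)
The plan is to reduce the classification over $\bL$ to the classification over the unimodular overlattice $\bLambda=\U^{\oplus5}$ already obtained in \autoref{thm resume lambda}, and then to upgrade the result to genuine automorphisms via effectivity. First I would pin down the admissible primes. If $G$ is nonsymplectic of prime order $p$, then $\varphi\otimes\mathbb{C}$ acts on $\eta(\sigma_X)$ by a primitive $p$-th root of unity $\zeta_p$, so the cyclotomic polynomial $\Phi_p$ divides the characteristic polynomial of $\varphi$ on $\bL_G$ and $\rk(\bL_G)$ is a positive multiple of $p-1$. Moreover the real and imaginary parts of $\eta(\sigma_X)$ span a positive-definite plane contained in $\bL_G$, while an averaged (invariant) K\"ahler class of positive square lies in $(\bL^G)_{\mathbb{R}}$; hence $\Sgn(\bL_G)=(2,\rk(\bL_G)-2)$ and $\rk(\bL^G)\ge 1$, which forces $\rk(\bL_G)\le 7$. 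The inequality $p-1\le \rk(\bL_G)\le 7$ leaves exactly $p\in\{2,3,5,7\}$, which is the content of \autoref{orders of non sympl}.

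Next I would transport the problem to $\bLambda$. Fix a primitive embedding $\bL\hookrightarrow\bLambda$; since $\bL\cong\U^{\oplus3}\oplus[-2]^{\oplus2}$ and $\bLambda\cong\U^{\oplus5}$, the orthogonal complement $\bL^{\perp}$ is a positive-definite rank-two $2$-elementary lattice, and I would extend the $G$-action to $\bLambda$ by letting it act as the identity on $\bL^{\perp}$. With this extension $\bL^{\perp}\subseteq\bLambda^G$, the coinvariant lattices agree, $\bL_G=\bLambda_G$, and the invariant lattice is recovered as $\bL^G=(\bL^{\perp})^{\perp}$ computed inside $\bLambda^G$; this is the dictionary set up in \autoref{Invariant and coinvariant lattices in lambda}. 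Because $\Sgn(\bL_G)=(2,\rk(\bL_G)-2)$, only the rows of \autoref{tab:Lambda} with signature $(2,\cdot)$ occur, and each supplies a candidate coinvariant lattice $\bLambda_G$.

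The computational heart is then to run through the admissible pairs $(\bLambda^G,\bLambda_G)$ of \autoref{thm resume lambda} and, for each, to determine the descended pair $(\bL^G,\bL_G)$. Here $\bL_G=\bLambda_G$ is immediate, while $\bL^G$ is obtained by removing a primitive copy of $\bL^{\perp}\cong[2]^{\oplus2}$ from $\bLambda^G$; I would carry this out with Nikulin's theory of discriminant forms, checking for each genus whether such a primitive embedding of $\bL^{\perp}$ exists and is unique, and reading off the invariant lattice from the gluing $\bL^G\oplus\bL_G\hookrightarrow\bL$, whose discriminant form must reproduce $q_{\bL}$. Cases in which $[2]^{\oplus2}$ fails to embed, or in which the resulting signatures are incompatible with $\bL$, are discarded; the survivors populate \autoref{tab:L}. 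I expect this step to be the main obstacle, as it requires a careful genus-by-genus existence and uniqueness analysis and the correct bookkeeping of discriminant forms, so as to avoid both spurious entries and omissions.

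Finally I would address effectivity, which promotes the abstract lattice classification to one of honest automorphisms. By \autoref{non sympl autom are effective} every nonsymplectic group of prime order in $\bO(\bL)$ is effective on some manifold of $\OG6$ type; this rests on the surjectivity of the period map together with the strong global Torelli theorem available in $\OG6$ type from the monodromy computation of Mongardi--Rapagnetta, after verifying that the isometry lies in the monodromy group and preserves a K\"ahler chamber. Combining the prime restriction, the lattice classification, and effectivity then yields the stated equivalence: a nonsymplectic group $G$ of prime order is effective precisely when $|G|\in\{2,3,5,7\}$ and the pair $(\bL^G,\bL_G)$ appears in \autoref{tab:L}.
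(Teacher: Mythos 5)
There is a genuine gap, and it concerns exactly the case that forces the paper's two-track structure: involutions acting \emph{nontrivially} on the discriminant group. Your reduction to \(\bLambda\) extends the \(G\)-action by the identity on \(\bL^{\perp}\cong[2]^{\oplus2}\), but by Nikulin's gluing criterion (\cite[Corollary 1.5.2]{Nik79}) such an extension exists only when \(G^{\sharp}\) acts trivially on \(\bL^{\sharp}\cong(\mathbb{Z}/2\mathbb{Z})^{\oplus2}\). For odd \(p\) this is automatic, but for \(p=2\) an involution may swap the two generators of \(\bL^{\sharp}\) (\(|G^{\sharp}|=2\)); then the only way to extend is to glue \(G\) with the swap \(\psi=\begin{pmatrix}0&1\\1&0\end{pmatrix}\) on \(\bR=\bL^{\perp}\), as in \autoref{signature of SgX and TgX nontrivial}. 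In that situation \(\bL_G\) is \emph{not} equal to \(\bLambda_{G'}\): one has \(\bL_G\cong(r_1-r_2)^{\perp}\subset\bLambda_{G'}\) and \(\bL^G\cong(r_1+r_2)^{\perp}\subset\bLambda^{G'}\), so \(\Sgn(\bLambda_{G'})=(3,\rk-3)\) while \(\Sgn(\bL_G)=(2,\rk-2)\). Your assertion that ``only the rows of \autoref{tab:Lambda} with signature \((2,\cdot)\) occur'' therefore discards precisely the block of \autoref{tab:Lambda} that \autoref{thm resume lambda} includes for this purpose, and your classification would omit all six rows of \autoref{tab:L} with \(|G|=2\), \(|G^{\sharp}|=2\). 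Note these cannot be recovered by your construction even in principle: their coinvariant lattices contain \([\pm4]\) summands, hence are not \(2\)-elementary, whereas any coinvariant lattice arising from a trivial extension is \(2\)-elementary by \autoref{lemma p elem}. Since the theorem is an ``if and only if,'' this omission breaks the statement.

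Two smaller points. First, in the \(|G^{\sharp}|=1\) case your expectation that the embedding of \(\bL^{\perp}\) (equivalently of \(\bL_G\) into \(\bL\)) be unique is too optimistic: the paper's \autoref{propsottogruppo} enumerates embedding subgroups \(H\subset\bL^{\sharp}\) of orders \(1,2,4\), and a single \(\bL_G\) can yield several invariant lattices (e.g.\ \(\bL_G=\U^{\oplus2}\oplus[-2]^{\oplus2}\) gives \(\bL^G\cong\U\), \([2]\oplus[-2]\), or \(\U(2)\)), so uniqueness must not be assumed. Second, the handling of the \(|G^{\sharp}|=2\) cases in the paper requires additional tools beyond discriminant-form bookkeeping: the determinant equality \(|\det(\bL_G)|=|\det(\bL^G)|\) from \cite[Lemma 3.2]{GOV20}, \autoref{lemma2} on gluing subgroups without order-\(4\) elements, and explicit matrices realizing the surviving cases; none of this is present in your outline. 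Your argument for the prime restriction and for effectivity is fine and matches the paper's use of \autoref{orders of non sympl}, \autoref{non sympl autom are effective}, the Mongardi--Rapagnetta monodromy computation, and Markman's Torelli theorem.
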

Finally we remark that in \cite{Grossi:induced} we determine if nonsymplectic  automorphisms of manifolds of \(\OG6\) type classified in \autoref{tab:L} in \autoref{thm resume} are induced \cite[Definition 3.7]{Grossi:induced}, and induced at the quotient \cite[Definition 4.2]{Grossi:induced}.

\subsection*{Acknowledgments} I wish to warmly thank Giovanni Mongardi for his supervision and for many useful hints and suggestions. Moreover I am grateful to Samuel Boissi\`{e}re, Claudio Onorati,  Alessandra Sarti, Davide Cesare Veniani, and Marc Nieper-Wi\ss{}kirchen for fruitful discussions. 
The hospitality of the University of Poitiers and the Massachusetts Institute of Technology and the financial support of G.A.H.I.A. project and INDAM are acknowledged.

\section{Preliminaries}\label{preliminaries}

In \autoref{lattices} we gather the required background and we give an overview of lattice theory and of finite quadratic forms, recalling the fundamental definitions and results which we will use throughout this paper. 

How to construct primitive embeddings of lattices is explained in \autoref{Embeddings of lattices} and in \autoref{Isometries} we recall basic results about nonsymplectic groups of isometries of lattices and \(p\)-elementary lattices.

\subsection{Lattices}\label{lattices}
Our main references for lattices are Nikulin’s paper \cite{Nik79}, but we also use \cite{conway2013sphere} and \cite[Chapter 14]{huybrechts2016lectures}. 
A lattice \(L\) is a free \(\mathbb{Z}\)-module of finite rank together with a symmetric bilinear form
\[
    ( \ , \ ) \colon L \times L \rightarrow \mathbb{Z},
\]
which we assume to be nondegenerate. A lattice \(L\) is called \textit{even} if \(x^{2} := (x, x) \in 2\mathbb{Z}\) \(\forall x \in L\). For any lattice \(L\) the \textit{discriminant group} is the finite group associated to \(L\) defined as ~ \(L^{\sharp} = L^{*}/L\), where \(L \hookrightarrow L^{*} := \Hom_{\mathbb{Z}}(L,\mathbb{Z}), \ x \longmapsto (x, \cdot)\). 
The discriminant group is a finite abelian group of order \(|{\det(L)}|\) and this number is called the \textit{discriminant} of \(L\). 
A lattice is called \textit{unimodular} if \(L^{\sharp}=\{\id\}\), and \textit{\(p\)-elementary} if the discriminant group \(L^{\sharp}\) is isomorphic to \((\mathbb{Z}/p\mathbb{Z})^{\oplus a}\) for some \(a \in \mathbb{Z}_{\geq0}\). 
The \textit{length} of the discriminant group \(L^{\sharp}\), denoted by \(l(L^{\sharp})\), is the minimal number of generators of the finite group \(L^{\sharp}\). 
The \textit{divisibility} \(\divi(v)\) of an element \(v \in L\) is the positive generator of the ideal ~ \((v,L )= \divi(v) \mathbb{Z}\). 
The pairing \(( \ , \ )\) on \(L\) induces a \(\mathbb{Q}\)-valued pairing on \(L^{*}\) and hence a pairing \(L^{\sharp} \times L^{\sharp} \rightarrow \mathbb{Q}/\mathbb{Z}\). 
If the lattice \(L\) is even, then the \(\mathbb{Q}\)-valued quadratic form on \(L^{*}\) yields
\[
    q_{L} \colon L^{\sharp} \rightarrow \mathbb{Q}/2\mathbb{Z}.
\]
The form \(q_L\) is called the \textit{discriminant quadratic form} on \(L\). There exists a natural homomorphism \(\bO(L) \rightarrow \bO(L^{\sharp})\). If \(G \subset \bO(L)\) is a subgroup of isometries then its image in \(\bO(L^{\sharp})\) is denoted by \(G^{\sharp}\). If \(g\in G \subset \bO(L)\) is an isometry, we denote by \(g^{\sharp}\) its image in \(\bO(L^{\sharp})\).

\begin{definition}\label{delta}
	Let \(L\) be a \(2\)-elementary even lattice, let \(q_{L}\) be the discriminant quadratic form on \(L\). We define 
	\[
	\delta(L)=
	\begin{cases}
	0 \ \mbox{if} \ q_L(x) \in \mathbb{Z}/2\mathbb{Z} \ \mbox{for all} \ x \in L^{\sharp} \\
	1  \ \mbox{otherwise} \\
	\end{cases}
	\]
\end{definition}

A fundamental invariant in the theory of lattices is given by the \textit{genus}.
Two lattices \(L\) and \(L'\) are in the same genus if \(L \oplus \U \cong L' \oplus \U\) or equivalently if and only if they have the same signature and discriminant quadratic form \cite[Corollary 1.9.4]{Nik79}. Sometimes we will need to know that a lattice is unique in its genus. In this paper all the lattices that we use are unique in their genus, either because they are indefinite and \(p\)-elementary \cite[Theorem 3.6.2]{Nik79}, or by an application of \cite[Theorem 1.14.2]{Nik79}. Finally if \(L\) is unique in its genus also ~ \(L(n)\) satisfies this property.

We introduce two lattices that will be useful in \autoref{Nonsymplectic automorphisms I: a classification in dimension six}:
\[
\h_{5} = \begin{pmatrix}
2 & 1 \\
1 & -2 \\
\end{pmatrix}, \ 
\K_{7} = \begin{pmatrix}
-4 & 1 \\
1 & -2 \\
\end{pmatrix},
\] 
and we recall that \(\U\) is the even unimodular lattice of rank 2 and \(\A_n\), \(\D_n\) and \(\E_n\) denote the positive definite ADE lattices. Moreover \([n]\) with \(n \in \mathbb{Z}\) denotes the rank \(1\) lattice generated by a  of square \(n\), in particular \(\A_1=[2]\).
\subsection{Embeddings of lattices}\label{Embeddings of lattices}
A morphism between two lattices \(S \rightarrow L\) is by definition a linear map that respects the quadratic forms. 
If \(S \hookrightarrow L\) has finite index then we say that \(L\) is an \textit{overlattice} of \(S\).
An injective morphism \(S \hookrightarrow L\) is called a \textit{primitive embedding} if its cokernel is torsion free and in this setting we denote by \(S^{\perp} \subset L\) the orthogonal complement.

Throughout the paper we refer to \cite[Proposition 1.15.1]{Nik79} and to \cite[Proposition 1.5.1]{Nik79} for the classification of primitive embeddings and the computation of orthogonal complements of primitive embeddings.

By \cite[Proposition 1.15.1]{Nik79} a primitive embedding \(S \hookrightarrow L\) where \(T=S^{\perp} \subset L\) is given by a subgroup \(H \subset L^{\sharp}\) which is called the \textit{embedding subgroup}, and an isometry \(\gamma \colon H \rightarrow H' \subset S^{\sharp}\) that we call the \textit{embedding isometry}. If \(\Gamma\) is the graph of \(\gamma\) in ~ \(L^{\sharp} \oplus S(-1)^{\sharp}\) then 
\[
    T^{\sharp} \cong \Gamma^{\perp}/\Gamma
\]
and 
\[
    |{\det(T)}|=|{\det (L)}| \cdot |{\det (S)}| /|H|^2.
\]
Equivalently by \cite[Proposition 1.5.1]{Nik79} we deduce that, if \(L\) is unique in its genus, a primitive embedding \(S \hookrightarrow L\) where \(T=S^{\perp} \subset L\) is given by a subgroup \(K \subset S^{\sharp}\) which is the \textit{gluing subgroup}, and an isometry \(\gamma \colon H \rightarrow H' \subset T^{\sharp}\) that we call the \textit{gluing isometry}. If \(\Gamma\) is the graph of \(\gamma\) in \(S^{\sharp} \oplus T(-1)^{\sharp}\) then
\[
    L^{\sharp} \cong \Gamma^{\perp}/\Gamma
\]
and
\[
    |{\det (L)}|=|{\det(S)}| \cdot |{\det (T)}| /|K|^2.
\]

\subsection{Isometries}\label{Isometries}
If \(L\) is a lattice and \(G \subset \bO(L)\) then the invariant sublattice of \(L\) is 
\[
L^G= \{ x \in L \mbox{ such that } g(x)=x, \forall g \in G\},
\]
and the coinvariant sublattice is 
\[
L_G = (L^{G})^{\perp}.
\]
It holds
\[
L \otimes \mathbb{Q}= (L^G \oplus L_G) \otimes \mathbb{Q}.
\]

Both the invariant and the coinvariant lattices are primitive sublattices of \(L\). In fact they can be expressed as kernels of endomorphisms. In particular, if \(G \subset \bO(L)\) is a cyclic group of order \(n\) generated by \(g\) then
\[
L^G=\Ker(g-\id), \ \ \ L_G=\Ker(\id+g+ \ldots + g^{n-1})
\]

The N\'{e}ron--Severi lattice is the \((1,1)\)--part of \(H^{2}(X,\mathbb{Z})\), i.e.\ \(\NS(X)=H^{2}(X,\mathbb{Z}) \cap H^{1,1}(X,\mathbb{C})\). The transcendental lattice \(\T(X)\) is the orthogonal complement of \(\NS(X)\) in \(H^{2}(X,\mathbb{Z})\) i.e.\ \(\T(X)=\NS(X)^{\perp}\) and it is the smallest sublattice of \(H^{2}(X,\mathbb{Z})\) such that ~ \(H^{2,0}(X)\subset \T(X) \otimes_{\mathbb{Z}} \mathbb{C}\). It holds \[H^{2}(X,\mathbb{Z}) \otimes \mathbb{Q}= (\NS(X) \oplus \T(X)) \otimes \mathbb{Q}.\]

\begin{proposition}[see, for instance, {\cite[\S 3]{Nikulin1}}]\label{inclusion} 
	If \((X,\eta)\) is a marked irreducible holomorphic symplectic manifolds with marking \(\eta \colon H^{2}(X,\mathbb{Z}) \to L\) and \(G \subset \bO(L)\) is a nonsymplectic group, then \(L^G \subset \NS(X)\) and \(\T(X) \subset L_G\).
\end{proposition}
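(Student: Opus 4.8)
The plan is to reduce everything to the single scalar identity describing how a nonsymplectic generator acts on the symplectic class, and then to read off both inclusions from the orthogonality properties of the Hodge decomposition with respect to the Beauville--Bogomolov--Fujiki form. Write $\sigma = \eta(\sigma_X) \in L\otimes\mathbb{C}$ and let $\varphi \in G$ be a nonsymplectic isometry generating $G$; by definition $\varphi$ stabilizes the line $\mathbb{C}\sigma$ and scales it by some $\lambda \neq 1$, that is $\varphi(\sigma) = \lambda\sigma$. The only geometric input I need is the characterization of $\NS(X)$: since the BBF form makes $H^{2,0}(X) = \mathbb{C}\sigma$ and $H^{0,2}(X) = \mathbb{C}\bar\sigma$ isotropic and orthogonal to $H^{1,1}(X)$, a class $x \in H^2(X,\mathbb{Z})$ lies in $\NS(X) = H^{2}(X,\mathbb{Z}) \cap H^{1,1}(X)$ if and only if $(x,\sigma) = 0$, the conjugate condition $(x,\bar\sigma) = \overline{(x,\sigma)} = 0$ being automatic as $x$ is real.

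First I would show that every invariant class is orthogonal to $\sigma$. If $x \in L^G$ then $\varphi(x) = x$, and since $\varphi$ extends to a $\mathbb{C}$-linear isometry of $L\otimes\mathbb{C}$,
\[
(x,\sigma) = (\varphi(x),\varphi(\sigma)) = (x,\lambda\sigma) = \lambda\,(x,\sigma),
\]
so $(\lambda-1)(x,\sigma) = 0$ and hence $(x,\sigma) = 0$. By the characterization of $\NS(X)$ above this is exactly the first assertion $L^G \subset \NS(X)$.

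For the second inclusion I would pass to orthogonal complements inside $L = H^2(X,\mathbb{Z})$: taking perpendiculars reverses inclusions, so $L^G \subset \NS(X)$ gives $\NS(X)^\perp \subset (L^G)^\perp$, and by the very definitions $\NS(X)^\perp = \T(X)$ and $(L^G)^\perp = L_G$, whence $\T(X) \subset L_G$. Equivalently, the displayed computation shows $\sigma \in (L^G\otimes\mathbb{C})^\perp = L_G\otimes\mathbb{C}$, so $L_G$ is a primitive sublattice whose complexification contains $H^{2,0}(X)$, and minimality of the transcendental lattice yields $\T(X) \subset L_G$ directly.

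I do not expect a real obstacle: this is the classical Nikulin computation transported to the irreducible holomorphic symplectic setting, and all the needed ingredients---primitivity of $L^G$ and $L_G$, the rational splitting $L\otimes\mathbb{Q} = (L^G\oplus L_G)\otimes\mathbb{Q}$, and the orthogonality of the Hodge decomposition---are already recorded above. The one point requiring care is the initial reduction: one must invoke that a nonsymplectic isometry stabilizes the line $\mathbb{C}\sigma$ and scales it by $\lambda\neq 1$, rather than merely moving $\sigma$ off itself, since it is precisely this eigenvalue form that makes the identity $(x,\sigma) = \lambda(x,\sigma)$ available.
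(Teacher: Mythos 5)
Your proof is correct and is exactly the classical argument that the paper relies on: the paper gives no proof of this proposition, citing Nikulin (\S 3) instead, and your eigenvalue computation \((x,\sigma)=\lambda(x,\sigma)\) for invariant \(x\), combined with the characterization \(\NS(X)=\sigma^{\perp}\cap H^{2}(X,\mathbb{Z})\) and passage to orthogonal complements, is precisely the standard argument behind that citation. The caveat you flag---that a nonsymplectic isometry preserves the line \(\mathbb{C}\sigma\) and scales it by \(\lambda\neq 1\), rather than merely moving \(\sigma\)---is legitimate within the paper's framework, since its definition of nonsymplectic isometry presupposes an action on \(\mathbb{C}\eta(\sigma_X)\) and the paper later states explicitly that nonsymplectic groups are groups of Hodge isometries.
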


Our convention is that the (real) spinor norm
\[
    \spin \colon \bO(L) \rightarrow \mathbb{R}^{\times}/(\mathbb{R}^{\times})^{2}=\{\pm1\}
\]
takes the value \(+1\) on a reflection \(\tau_{v}\) when \(v\) is a  such that \(v^{2}<0\). Moreover, the Cartan--Dieudonné theorem guarantees that \(\bO(L\otimes \mathbb{R})\) is generated by reflections in nonisotropic elements.
For more details about this choice we refer to \cite{Miranda.Morrison} where the opposite convention is used. We denote by \(\bO^{+}(L)\) the kernel of the spinor norm.
\begin{remark}
	An isometry \(g \in \bO(L)\) belongs to the kernel of the spinor norm if and only if \(g\) preserves the orientation of a positive definite subspace \(V  \subset L \otimes \mathbb{R}\) of maximal rank.
\end{remark}

\begin{lemma}\label{lemma spin}
	Let \(G \subset \bO(L)\) be a subgroup of order \(2\) and let \(\psi\) be a generator of \(G\). Then
	\[
	\spin(\psi)=(-1)^{s_{+}},
	\]
	where \((s_{+},s_{-})\) is the signature of \(L_G\).
\end{lemma}
\proof
Let \((t_+,t_-)\) be the signature of \(L^G\). 
Denote by \(\rho_v \in \bO(L \otimes \mathbb{R})\) the reflection with respect to the  \(v \in L \otimes \mathbb{R}\). 
Choose an orthonormal basis \(\{ e_1,...,e_{s_+},f_1,...,f_{s-}\} \) of \(L_G \otimes \mathbb{R}\) (so \(e_i^2 = 1 = -f_j^2\)), and observe that if \(v\) belongs to this basis then \(\rho_{v}\) sends \(v\) to \(-v\) and preserves both \(L^G\) and the other basis elements. Moreover \(\spin(\rho_{e_k})=-1\) and \(\spin(\rho_{f_l})=1\).
Since \(\psi\) acts as \(-\id\) on \(L_G \otimes \mathbb{R}\) and as \(\id\) on \(L^G \otimes \mathbb{R}\), we have that 
\(\psi=\rho_{e_{1}} \circ \cdots \circ \rho_{e_{s+}} \circ \rho_{f_{1}} \circ \cdots \circ \rho_{f_{s-}}\). Applying the spinor norm we have that 
\[
    \pushQED{\qed}
    \spin({\psi})=\spin(\rho_{e_{1}}) \cdots \spin(\rho_{e_{s_+}}) \spin(\rho_{f_{1}}) \cdots \spin(\rho_{f_{s-}})= (-1)^{s_{+}}. \qedhere
    \popQED
\]

\begin{lemma}[see, for instance, {\cite[\S 5.3]{boissiere2013smith}}]\label{lemma p elem}
	If \(\Lambda\) is a unimodular lattice and \(G \subset \bO(\Lambda)\) is a subgroup of prime order \(p\), then \(\Lambda_G\) and \(\Lambda^G\) are \(p\)-elementary lattices and \((\Lambda_G)^{\sharp} \cong (\Lambda^G(-1))^{\sharp}\).
\end{lemma}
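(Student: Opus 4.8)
The plan is to fix a generator $g$ of $G$ and to exploit the norm endomorphism $N = \id + g + \cdots + g^{p-1}$ together with the descriptions $\Lambda^G = \Ker(g-\id)$ and $\Lambda_G = \Ker N$ recalled above. Since $\Lambda$ is nondegenerate we have $\Lambda^G \cap \Lambda_G = 0$, so $\Lambda^G \oplus \Lambda_G$ is an orthogonal, finite-index sublattice of $\Lambda$, and the whole argument amounts to controlling the glue group $\Lambda/(\Lambda^G \oplus \Lambda_G)$.

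The first step is to prove the inclusion $p\Lambda \subseteq \Lambda^G \oplus \Lambda_G$. For $x \in \Lambda$ I write $px = Nx + (px - Nx)$: the identity $gN = N$ gives $Nx \in \Lambda^G$, while $px - Nx = \sum_{i=0}^{p-1}(\id - g^i)x$ lies in $\Lambda_G$ because $N(\id - g^i) = 0$. In particular $\Lambda/(\Lambda^G \oplus \Lambda_G)$ is annihilated by $p$.

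The second step deduces $p$-elementarity. Here I use the standard consequence of unimodularity that, for a primitive sublattice $S \subseteq \Lambda$, the orthogonal projection $p_S$ onto $S \otimes \mathbb{Q}$ satisfies $p_S(\Lambda) = S^{*}$ (the restriction $\Lambda = \Lambda^{*} \to S^{*}$ is surjective since $S$ is a $\mathbb{Z}$-summand, with kernel $S^{\perp}$). For $S = \Lambda^G$ the projector is exactly $\tfrac{1}{p}N$, and for $S = \Lambda_G$ it is $\id - \tfrac{1}{p}N$. Combining with the first step, $p\,(\Lambda^G)^{*} = p_{\Lambda^G}(p\Lambda) \subseteq p_{\Lambda^G}(\Lambda^G \oplus \Lambda_G) = \Lambda^G$, so $p$ kills $(\Lambda^G)^{\sharp}$; the same computation with the other projector shows $p$ kills $(\Lambda_G)^{\sharp}$. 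Hence both discriminant groups are $(\mathbb{Z}/p\mathbb{Z})$-vector spaces, i.e. $\Lambda^G$ and $\Lambda_G$ are $p$-elementary.

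For the last assertion I invoke the gluing description of overlattices of $\Lambda^G \oplus \Lambda_G$ as in \cite[Proposition 1.5.1]{Nik79}: since $\Lambda$ is unimodular, the glue group $H = \Lambda/(\Lambda^G \oplus \Lambda_G)$ is an isotropic subgroup of $(\Lambda^G)^{\sharp} \oplus (\Lambda_G)^{\sharp}$ whose two projections are injective (by primitivity of $\Lambda^G$ and $\Lambda_G$) and, by the order count $|H|^{2} = |{\det \Lambda^G}|\cdot|{\det \Lambda_G}|$, isomorphisms onto the two factors; their composite reverses the discriminant form by isotropy of $H$, which is exactly an isometry $(\Lambda_G)^{\sharp} \cong (\Lambda^G(-1))^{\sharp}$. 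The only mildly delicate points are the identification of the projectors with $\tfrac{1}{p}N$ and $\id - \tfrac{1}{p}N$ and the surjectivity $p_S(\Lambda) = S^{*}$; once these are set up the rest is a short formal computation, and I expect no serious obstacle, the content being standard lattice theory specialized to the prime-order case.
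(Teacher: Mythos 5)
Your proof is correct. Note that the paper itself offers no argument for this lemma at all --- it is quoted from the literature (Boissi\`ere--Nieper-Wi\ss{}kirchen--Sarti, \S 5.3) --- and your reconstruction is exactly the standard proof given there: the norm endomorphism \(N=\id+g+\cdots+g^{p-1}\) yielding \(p\Lambda \subseteq \Lambda^G \oplus \Lambda_G\), the fact that orthogonal projection of a unimodular lattice onto a primitive sublattice \(S\) surjects onto \(S^{*}\), and the glue-group count \(|H|^{2}=|{\det \Lambda^G}|\cdot|{\det \Lambda_G}|\) forcing both projections of \(H=\Lambda/(\Lambda^G\oplus\Lambda_G)\) to be isomorphisms whose composite is an anti-isometry. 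So you have simply supplied, correctly and completely, the details the paper delegates to its citation.
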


\begin{lemma}[Boissiére--Nieper-Wißkirchen--Sarti{\cite[Lemma 5.3]{boissiere2013smith}} Mongardi--Tari--Wandel {\cite[Lemma 1.8]{MTW18}}]\label{Lemma su sottogruppi di ordine p}
	Let \(L\) be a lattice and \(G \subset \bO(L)\) a subgroup of prime order \(p\). Then \((p - 1) \mid \rank(L_G)\) and 
	\[
		\frac{L}{L^G \oplus  L_G} \cong (\mathbb{Z}/p\mathbb{Z})^{a}.
	\]
There are natural embeddings of \(\frac{L}{L^G  \oplus L_G}\) into the discriminant groups \((L^G)^{\sharp}\) and \((L_G)^{\sharp}\). 
Moreover, if \(m (p-1)= \rk(L_G)\) then \(a\leq m\).
\end{lemma}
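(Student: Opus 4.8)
The plan is to fix a generator $g$ of $G$, so that $g^p=\id$ and, as recalled above, $L^G=\Ker(g-\id)$ and $L_G=\Ker(\id+g+\cdots+g^{p-1})$. First I would note that the restriction of $g$ to $L_G$ is annihilated by the $p$-th cyclotomic polynomial $\Phi_p(x)=1+x+\cdots+x^{p-1}$, which is irreducible over $\mathbb{Q}$. Hence $L_G\otimes\mathbb{Q}$ becomes a vector space over the field $\mathbb{Q}[x]/(\Phi_p)\cong\mathbb{Q}(\zeta_p)$, which has degree $p-1$ over $\mathbb{Q}$; therefore $\rank(L_G)=\dim_{\mathbb{Q}}(L_G\otimes\mathbb{Q})$ is a multiple of $p-1$, and I set $m=\rank(L_G)/(p-1)$.

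Next I would show that the quotient is $p$-elementary. For an arbitrary $x\in L$, the element $\sum_{i=0}^{p-1}g^i(x)$ is $g$-invariant, hence lies in $L^G$, whereas a short computation gives $(\id+g+\cdots+g^{p-1})\bigl(px-\sum_{i=0}^{p-1}g^i(x)\bigr)=0$, so $px-\sum_{i=0}^{p-1}g^i(x)\in L_G$. Adding the two contributions shows $pL\subseteq L^G\oplus L_G$, where the sum is direct because $L\otimes\mathbb{Q}=(L^G\oplus L_G)\otimes\mathbb{Q}$. Thus $H:=L/(L^G\oplus L_G)$ is a finite group killed by $p$, that is, $H\cong(\mathbb{Z}/p\mathbb{Z})^a$ for some $a\geq 0$.

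For the two embeddings I would use the standard overlattice picture. Writing $M=L^G\oplus L_G$, the inclusions $M\subseteq L\subseteq L^{*}\subseteq M^{*}$ identify $H=L/M$ with a subgroup of $M^{*}/M=(L^G)^{\sharp}\oplus(L_G)^{\sharp}$, and it remains to see that each projection is injective. If the class of $\ell\in L$ vanishes in $(L^G)^{\sharp}$, then the $L^G\otimes\mathbb{Q}$-component of $\ell$ already lies in $L^G$, so its $L_G\otimes\mathbb{Q}$-component lies in $L\cap(L_G\otimes\mathbb{Q})=L_G$ by primitivity of $L_G$; hence $\ell\in M$. The argument for $(L_G)^{\sharp}$ is symmetric.

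The bound $a\leq m$ is the step I expect to be the crux. Since $g$ preserves $L_G$ and $(g-\id)$ maps $L$ into $L\cap(L_G\otimes\mathbb{Q})=L_G$, the rule $[\ell]\mapsto[(g-\id)\ell]$ defines a homomorphism $H\to L_G/(g-\id)L_G$; it is injective, because $(g-\id)\ell\in(g-\id)L_G$ forces $(g-\id)(\ell-y)=0$ for some $y\in L_G$, so $\ell-y\in L^G$ and $\ell\in M$. Finally I would identify the order of the target with $|{\det\bigl((g-\id)|_{L_G}\bigr)}|$; as the characteristic polynomial of $g|_{L_G}$ is $\Phi_p(x)^m$ and $\prod_{k=1}^{p-1}(\zeta_p^{k}-1)=\pm\Phi_p(1)=\pm p$, this determinant equals $\pm p^{m}$. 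Therefore $p^{a}=|H|\leq|L_G/(g-\id)L_G|=p^{m}$, giving $a\leq m$. The main obstacle is exactly this last determinant evaluation, together with the verification that $(g-\id)$ carries $L$ into $L_G$; the preceding three steps are essentially formal.
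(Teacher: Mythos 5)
Your proof is correct. Note that the paper does not actually prove this lemma: it imports it from Boissi\`ere--Nieper-Wi\ss{}kirchen--Sarti and Mongardi--Tari--Wandel, so the relevant comparison is with those sources rather than with an in-paper argument. Your first three steps follow the same skeleton as the standard proofs there: the \(\mathbb{Q}(\zeta_p)\)-vector space structure on \(L_G\otimes\mathbb{Q}\) gives \((p-1)\mid\rk(L_G)\); the averaging identity \(px=\sum_i g^i(x)+\bigl(px-\sum_i g^i(x)\bigr)\) gives \(pL\subseteq L^G\oplus L_G\) (with directness of the sum from \(L^G\cap L_G=0\)); and the overlattice inclusion of \(L/(L^G\oplus L_G)\) into \((L^G)^{\sharp}\oplus(L_G)^{\sharp}\) has injective projections by primitivity of the two sublattices. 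Where you genuinely deviate is the last step: the cited proofs in effect use that \(L_G\) is a torsion-free module of rank \(m\) over the Dedekind domain \(\mathbb{Z}[\zeta_p]\), whence \(L_G/(1-\zeta_p)L_G\cong(\mathbb{Z}/p\mathbb{Z})^{m}\), whereas you inject \(L/(L^G\oplus L_G)\) into \(L_G/(g-\id)L_G\) and compute the index \([L_G:(g-\id)L_G]\) as \(|{\det((g-\id)|_{L_G})}|=p^{m}\), using that the characteristic polynomial of \(g|_{L_G}\) is \(\Phi_p(x)^{m}\) and \(\Phi_p(1)=p\). The two computations have the same content, but yours is more elementary and self-contained: it needs only the Smith normal form fact that the index of an injective endomorphism equals the absolute value of its determinant, with injectivity of \((g-\id)|_{L_G}\) coming from \(L^G\cap L_G=0\); the module-theoretic route instead yields a structural fact about \(L_G\) (its \(\mathbb{Z}[\zeta_p]\)-module structure) that is useful in other arguments but not needed for this statement.
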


\begin{proposition}[Boissi\`{e}re--Camere--Mongardi--Sarti {\cite[\S 4]{BCMS16}}]\label{discr divisibile per p alla p-2}
	Let \(L\) be a lattice with a nontrivial action of order \(p\), with rank \(p-1\), and discriminant \(d\). Then \(\frac{d}{p^{p-2}}\) is a square in ~ \(\mathbb{Q}\).
\end{proposition}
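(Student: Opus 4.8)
The plan is to exploit the cyclotomic structure that the rank hypothesis forces on $L$; throughout I take $p$ odd, where the statement has content. First I would use \autoref{Lemma su sottogruppi di ordine p}: since $(p-1)\mid\rk(L_G)$ and the action is nontrivial, $\rk(L_G)\geq p-1=\rk(L)$, which forces $L^G=0$ and $L=L_G$. Hence a generator $g$ of the group has no eigenvalue $1$, and its characteristic polynomial, being a degree $p-1$ divisor of $x^p-1$ without the factor $x-1$, equals the cyclotomic polynomial $\Phi_p$. Consequently $L$ is a torsion-free module of rank one over the Dedekind ring $\mathbb{Z}[\zeta_p]=\mathcal{O}_K$, with $K=\mathbb{Q}(\zeta_p)$, so $L$ is isomorphic to a fractional ideal $\mathfrak{a}\subset K$, and $L\otimes\mathbb{Q}\cong K$ with $g$ acting as multiplication by $\zeta_p$.

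Next I would transfer the bilinear form to a trace form. Writing $\bar{\ \cdot\ }$ for the complex conjugation $\zeta_p\mapsto\zeta_p^{-1}$ of $K$, every $g$-invariant symmetric $\mathbb{Q}$-bilinear form on $K$ can be written uniquely as $b(x,y)=\operatorname{Tr}_{K/\mathbb{Q}}(\alpha\,x\,\bar y)$: invariance under $g$ reflects $\zeta_p\bar\zeta_p=1$, and symmetry together with the Galois invariance of the trace forces $\alpha=\bar\alpha$, i.e.\ $\alpha$ lies in the maximal totally real subfield $K^+=\mathbb{Q}(\zeta_p+\zeta_p^{-1})$.

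Then I would compute the discriminant $d=\det b$ on a $\mathbb{Z}$-basis $\omega_1,\dots,\omega_{p-1}$ of $\mathfrak a$. Using the embeddings $\sigma_1,\dots,\sigma_{p-1}\colon K\hookrightarrow\mathbb{C}$ and the identity $\sigma_k(\bar z)=\overline{\sigma_k(z)}$, the Gram matrix factors as $A^{t}D\bar A$ with $A=(\sigma_k(\omega_i))$ and $D=\diag(\sigma_k(\alpha))$, giving $d=\lvert\det A\rvert^{2}\,N_{K/\mathbb{Q}}(\alpha)$. Since the embeddings occur in $(p-1)/2$ conjugate pairs, entrywise conjugation permutes the rows of $A$ with sign $(-1)^{(p-1)/2}$, so $\lvert\det A\rvert^{2}=(-1)^{(p-1)/2}\det\bigl(\operatorname{Tr}_{K/\mathbb{Q}}(\omega_i\omega_j)\bigr)=(-1)^{(p-1)/2}N(\mathfrak a)^{2}\,d_K$, where $d_K=(-1)^{(p-1)/2}p^{\,p-2}$ is the discriminant of $\mathbb{Q}(\zeta_p)$. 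The two signs cancel, yielding $d=N(\mathfrak a)^{2}\,p^{\,p-2}\,N_{K/\mathbb{Q}}(\alpha)$.

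The decisive point, which I expect to be the crux, is that $N_{K/\mathbb{Q}}(\alpha)$ is automatically a square. Because $\alpha\in K^+$ is fixed by complex conjugation, the relative norm is $N_{K/K^+}(\alpha)=\alpha\bar\alpha=\alpha^{2}$, whence $N_{K/\mathbb{Q}}(\alpha)=N_{K^+/\mathbb{Q}}(\alpha)^{2}$. Therefore $d/p^{\,p-2}=\bigl(N(\mathfrak a)\,N_{K^+/\mathbb{Q}}(\alpha)\bigr)^{2}$ is a square in $\mathbb{Q}$, as claimed. The main obstacle in a full write-up is organizing the bookkeeping of the two $(-1)^{(p-1)/2}$ factors and the discriminant of a fractional ideal; conceptually everything hinges on the totally real nature of $\alpha$, which turns the norm into a perfect square.
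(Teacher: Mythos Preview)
The paper does not give its own proof of this proposition; it is quoted verbatim from \cite[\S4]{BCMS16} and used as a black box. There is therefore nothing in the paper to compare your argument against.

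That said, your argument is correct and is essentially the standard cyclotomic proof. The reduction to $L=L_G$ via \autoref{Lemma su sottogruppi di ordine p} is valid (nontrivial action forces $L_G\neq 0$, hence $\rk(L_G)\geq p-1=\rk(L)$, and primitivity gives $L_G=L$); the identification of $L$ with a fractional ideal in $K=\mathbb{Q}(\zeta_p)$ follows because torsion-free rank-one modules over the Dedekind ring $\mathcal{O}_K$ are fractional ideals; the description of the invariant symmetric form as $\operatorname{Tr}_{K/\mathbb{Q}}(\alpha\,x\,\bar y)$ with $\alpha\in K^+$ is right (invariance pins down the conjugate-linearity, symmetry forces $\alpha=\bar\alpha$); and the determinant computation, including the cancellation of the two $(-1)^{(p-1)/2}$ factors coming from $d_K$ and from $\overline{\det A}=(-1)^{(p-1)/2}\det A$, is accurate. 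The punchline $N_{K/\mathbb{Q}}(\alpha)=N_{K^+/\mathbb{Q}}(\alpha)^2$ is exactly the reason the quotient is a square.

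One cosmetic point: the paper defines the discriminant as $|{\det(L)}|$, so strictly speaking you should observe that your formula already gives $\det B>0$. This is automatic, since on $L\otimes\mathbb{R}$ the eigenspaces of $g$ come in complex-conjugate pairs on each of which the form is definite, so the signature $(r,s)$ has both $r$ and $s$ even.
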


\section{Invariant and coinvariant lattices of \texorpdfstring{\(\bLambda\)}{Lambda}}\label{Invariant and coinvariant lattices in lambda}

The main goal of this paper is to classify effective nonsymplectic groups \(G \subset \bO(\bL)\) of prime order on manifolds of \(\OG6\) type.  In order to pursue this goal it is convenient to consider \textit{the} primitive embedding \(\bL \hookrightarrow \bLambda\) where \(\bLambda = \U^{\oplus5}\) is the smallest unimodular lattice in which \(\bL\) embeds. Such an embedding is unique up to isometry of \(\bLambda\) by \cite[Proposition 1.15.1]{Nik79}. In \autoref{Nonsymplectic groups of prime order are effective} we show that nonsymplectic groups \(G \subset \bO(\bL)\) of prime order are effective.
 In \autoref{Admissible signature of invariant and coinvariant sublattices of Lambda} we exhibit the possible values of the signature of \(p\)-elementary sublattices of \(\bLambda\). These values are related to the signature of the invariant and the coinvariant sublattices of \(\bL\) with respect to a nonsymplectic group \(G \subset \bO(\bL)\) of prime order \(p \in \{2,3,5,7\}\). Afterwards in \autoref{Existence of the actions of prime orders on Lambda} we give a criterion to determine if there exists a group \(G \subset \bO(\bLambda)\) of prime order such that the \(p\)-elementary sublattices of \(\bLambda\) are the invariant and the coinvariant sublattices with respect to the action of \(G\). Finally in \autoref{Proof of theorem 1.1} we prove \autoref{thm resume lambda}.
 \subsection{Nonsymplectic groups of prime order are effective} \label{Nonsymplectic groups of prime order are effective}
Consider the primitive embedding \(\bL \hookrightarrow \bLambda\).We call \(\bR=\bL^{\perp} \hookrightarrow \bLambda\) the \textit{residual lattice}. Then \(\bR \cong[2]^{\oplus2}\), thus it is a \(2\)-elementary lattice of signature \((2,0)\).
Given an isometry \(g \in \bO(\bL)\), by \cite[Corollary 1.5.2]{Nik79} there exists \(g' \in \bO(\bLambda)\) such that ~\(g'\) restricts to \(g\) on \(\bL\) if and only if there exists an isometry \(g'' \in \bO(\bR)\) with the following property: \(g^{\sharp}\) and \(g''^{\sharp}\) (see \autoref{lattices} for the notation) coincide through an isomorphism \(\bL^{\sharp} \cong \bR^{\sharp}\). In particular (cf. \cite[Lemma~2.12]{mongardi2012symplectic}) we have the following result, where we denote by \(\varphi^{\sharp}\) the image of \(\varphi\) in \(\bO(\bL^{\sharp})\).
 
 \begin{lemma}\label{come si estende l'azione se azione banale su Ax}
 	
 	Let \(X\) be a manifold of \(\OG6\) type. If \(\varphi \in \bO(\bL)\) is an isometry such that \(\varphi^{\sharp}=\id\) then there exists a primitive embedding \(\bL \hookrightarrow \bLambda\) and \(\varphi\) extends to an element \(\widetilde{\varphi} \in \bO(\bLambda)\) that acts trivially on \(\bL^{\perp} \subset \bLambda\).
 	
 \end{lemma}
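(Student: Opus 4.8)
The plan is to reduce the statement to a direct application of Nikulin's extension criterion stated just above, namely \cite[Corollary 1.5.2]{Nik79}. Recall that for the primitive embedding $\bL \hookrightarrow \bLambda$ the residual lattice is $\bR = \bL^{\perp} \cong [2]^{\oplus 2}$, which is $2$-elementary of signature $(2,0)$. The criterion says that an isometry $\varphi \in \bO(\bL)$ extends to some $\widetilde\varphi \in \bO(\bLambda)$ \emph{that restricts to a prescribed isometry $g''$ on $\bR$} precisely when $\varphi^{\sharp}$ and $(g'')^{\sharp}$ agree under the fixed isomorphism $\bL^{\sharp} \cong \bR^{\sharp}$ coming from the embedding (both discriminant groups are $(\mathbb{Z}/2\mathbb{Z})^{\oplus 2}$, and $\bL^{\sharp}\cong \bR(-1)^{\sharp}$ since $\bLambda$ is unimodular). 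So the whole question is to find a $g'' \in \bO(\bR)$ whose induced action $(g'')^{\sharp}$ on $\bR^{\sharp}$ matches $\varphi^{\sharp}$ on $\bL^{\sharp}$, \emph{and} in addition to arrange that the resulting $\widetilde\varphi$ acts trivially on $\bR \subset \bLambda$.

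Under the hypothesis $\varphi^{\sharp} = \id$, this becomes especially clean. First I would transport $\varphi^{\sharp} = \id$ through the isomorphism $\bL^{\sharp} \cong \bR^{\sharp}$ to conclude that the required $g''$ must satisfy $(g'')^{\sharp} = \id$ on $\bR^{\sharp}$. The natural candidate is simply $g'' = \id_{\bR}$, whose image in $\bO(\bR^{\sharp})$ is of course the identity; thus the matching condition of \cite[Corollary 1.5.2]{Nik79} is verified trivially. Therefore the criterion yields an extension $\widetilde\varphi \in \bO(\bLambda)$ with $\widetilde\varphi|_{\bL} = \varphi$ and $\widetilde\varphi|_{\bR} = g'' = \id_{\bR}$, which is exactly the assertion that $\widetilde\varphi$ acts trivially on $\bL^{\perp} = \bR \subset \bLambda$.

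The only genuine point requiring care is the identification of the discriminant forms and the sign/twist conventions: one must check that the isomorphism $\bL^{\sharp} \cong \bR^{\sharp}$ used to compare $\varphi^{\sharp}$ and $(g'')^{\sharp}$ is the one induced by the primitive embedding $\bL \hookrightarrow \bLambda$ into the unimodular lattice $\bLambda$, under which $q_{\bL} \cong -q_{\bR}$. Since $\id$ is preserved under any such isomorphism (the identity maps to the identity regardless of a sign twist on the form), this subtlety does not obstruct the argument in the case $\varphi^{\sharp}=\id$; it would only matter for nontrivial $\varphi^{\sharp}$. Hence I expect the main (and essentially only) obstacle to be bookkeeping, namely invoking the extension criterion with the correct discriminant isomorphism rather than any substantive computation. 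I would therefore present the proof as a one-paragraph invocation of \cite[Corollary 1.5.2]{Nik79}, taking $g'' = \id_{\bR}$ and noting that $\id^{\sharp} = \id = \varphi^{\sharp}$ matches through $\bL^{\sharp} \cong \bR^{\sharp}$, so that the guaranteed extension $\widetilde\varphi$ fixes $\bR$ pointwise.
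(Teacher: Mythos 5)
Your proof is correct, but it takes a different route from the paper's. You reduce the lemma to a black-box invocation of Nikulin's extension criterion \cite[Corollary 1.5.2]{Nik79}: take $g''=\id_{\bR}$, observe that $\id^{\sharp}=\id$ matches $\varphi^{\sharp}=\id$ through \emph{any} isomorphism $\bL^{\sharp}\cong\bR^{\sharp}$ (so the usual worry about which gluing anti-isometry is used, $q_{\bL}\cong -q_{\bR}$, is vacuous here), and conclude that $\varphi\oplus\id_{\bR}$ extends to $\bO(\bLambda)$. That is a complete argument, granted the existence of the primitive embedding $\bL\hookrightarrow\bLambda$ with $\bR\cong[2]^{\oplus2}$, which the paper establishes just before the lemma and which you correctly take as given. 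The paper instead proves the lemma by hand: it picks generators $[v_{1}/2],[v_{2}/2]$ of $\bL^{\sharp}$ with $v_{i}^{2}=-2$, writes $\varphi(v_{i})=v_{i}+2w_{i}$ (this is exactly the hypothesis $\varphi^{\sharp}=\id$), realizes $\bLambda$ concretely as the overlattice of $\bL\oplus\mathbb{Z}r_{1}\oplus\mathbb{Z}r_{2}$ generated by the glue vectors $\tfrac{r_{i}+v_{i}}{2}$, and then checks directly that setting $\widetilde{\varphi}(r_{i})=r_{i}$ gives a well-defined isometry of the overlattice, since $\widetilde{\varphi}\bigl(\tfrac{r_{i}+v_{i}}{2}\bigr)=\tfrac{r_{i}+v_{i}}{2}+w_{i}$ stays in it. The two arguments rest on the same gluing mechanism; what the paper's version buys is an explicit description of the embedding and of the extension (useful for the later computations with $\bR$, e.g.\ in the $|G^{\sharp}|=2$ analysis), while yours buys brevity and makes transparent that the only input is the triviality of $\varphi^{\sharp}$.
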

 \proof
 Let \([v_{1}/2]\) and \([v_{2}/2]\) be two generators of \(\bL^{\sharp}\) such that \(v_{1}^{2}=-2\) and \(v_{2}^{2}=-2\). Then \(\varphi^{\sharp}([v_{1}/2])=[v_{1}/2]\) and \(\varphi^{\sharp}([v_{2}/2])=[v_{2}/2]\) i.e. \(\varphi(v_{1})=v_{1}+2w_{1}\) and \(\varphi(v_{2})=v_{2}+2w_{2}\), with \(w_{i} \in \bL\) for \(i\in\{1,2\}\). Consider a rank 2 lattice generated by two orthogonal elements \(r_{1}\) and \(r_{2}\) of square 2; its discriminant group is also \((\mathbb{Z}/2\mathbb{Z})^{\oplus 2}\) and it is generated by \([r_{1}/2]\) and \([r_{2}/2]\) with discriminant form given by \(q(r_{1}/2)=1/2\), \(q(r_{2}/2)=1/2\) and \((r_{1}, r_{2})=0\).
 Notice that \(\bL \oplus \mathbb{Z} r_{1} \oplus \mathbb{Z} r_{2}\) has an overlattice isometric to \(\bLambda\) which is generated by \(\bL\), \(\frac{r_1+v_1}{2}\) and \(\frac{r_2+v_2}{2}\). We extend \(\varphi\) to \(\bL \oplus \mathbb{Z}r_1 \oplus \mathbb{Z}r_2\) by imposing \(\varphi(r_1)=r_1\), \(\varphi(r_2)=r_2\) and we obtain an extension \(\widetilde{\varphi}\) of \(\varphi\) on \(\bLambda\) defined as follows:
 \[
  \widetilde{\varphi}\left(\frac{r_i+v_i}{2}\right)=\frac{r_i +\varphi(v_i)}{2}.
 \]
 \endproof
 \begin{remark}\label{TG=NS}
 	If \(X\) is an irreducible holomorphic symplectic manifold and \(G\) is a cyclic group generated by a nonsymplectic isometry, then at a generic point of the moduli space of pairs \((X,G)\) the invariant lattice is the N\'{e}ron-Severi lattice and the coinvariant one is the transcendental lattice \cite[\S3]{Nik_finitegroups}. 
 \end{remark}

 \begin{proposition}\label{orders of non sympl}
 	If \(X\) is an irreducible holomorphic symplectic manifold of \(\OG6\) type and \(G\subset \bO(\bL)\) is a nonsymplectic group of prime order \(p\) then \(p\in\{ 2,3,5,7 \}\).
 \end{proposition}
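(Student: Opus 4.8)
The plan is to bound \(p\) using only two ingredients: the nontriviality of \(\bL_G\) forced by nonsymplecticity, and the rank divisibility of \autoref{Lemma su sottogruppi di ordine p}. First I would fix a generator \(g\) of \(G\). Since \(g\) has order \(p\) and acts nontrivially on the period line \(\mathbb{C}\eta(\sigma_X)\), it acts there by multiplication by a nontrivial \(p\)-th root of unity \(\zeta\); as \(p\) is prime, \(\zeta\) is in fact primitive. Hence \(\eta(\sigma_X)\) lies in the \(\zeta\)-eigenspace of \(g\) on \(\bL\otimes\mathbb{C}\). Because \(\bL_G=\Ker(\id+g+\cdots+g^{p-1})\), the space \(\bL_G\otimes\mathbb{C}\) is exactly the sum of the eigenspaces attached to the nontrivial \(p\)-th roots of unity (the roots of \(1+x+\cdots+x^{p-1}\)), so \(\eta(\sigma_X)\in\bL_G\otimes\mathbb{C}\). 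Equivalently, by \autoref{inclusion} one has \(\T(X)\subseteq\bL_G\), and \(\eta(\sigma_X)\in\T(X)\otimes\mathbb{C}\); together with its conjugate this shows that the positive-definite real plane spanned by \(\mathrm{Re}\,\eta(\sigma_X)\) and \(\mathrm{Im}\,\eta(\sigma_X)\) sits inside \(\bL_G\otimes\mathbb{R}\). In particular \(\bL_G\neq 0\), and \(\Sgn(\bL_G)=(s_+,s_-)\) with \(s_+\geq 2\); this last positivity is what will be needed later in \autoref{thm resume lambda}.

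Next I would apply \autoref{Lemma su sottogruppi di ordine p}, which gives \((p-1)\mid\rk(\bL_G)\). Since \(\bL_G\neq 0\), its rank is a \emph{positive} multiple of \(p-1\), hence \(\rk(\bL_G)\geq p-1\). On the other hand \(\rk(\bL_G)\leq\rk(\bL)=8\). Combining the two inequalities yields \(p-1\leq 8\), that is \(p\leq 9\); the only primes in this range are \(2,3,5,7\), which is the assertion.

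The argument is short, and its only substantial input is \autoref{Lemma su sottogruppi di ordine p}. The delicate point is the passage from ``\(g\) acts by a single primitive root \(\zeta\) on the period'' to ``the entire Galois orbit of \(\zeta\) contributes to \(\bL_G\)'', which forces \(\rk(\bL_G)\) to be divisible by \(\deg\Phi_p=p-1\) rather than merely positive; this is exactly the integrality/representation-theoretic content already recorded in that lemma. Once it is invoked, and once one checks that nonsymplecticity genuinely guarantees \(\bL_G\neq 0\), the bound \(p\leq 9\) is immediate, so there is no serious obstacle beyond assembling these observations correctly.
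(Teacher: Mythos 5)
Your proof is correct, but it is not the route the paper takes: the paper's entire proof of \autoref{orders of non sympl} is a one-line citation of Beauville \cite[Proposition 6]{Beauville_someremarks}, i.e.\ the classical Euler-function bound on the order of the action on \(H^{2,0}\). What you have written is, in effect, a self-contained lattice-theoretic derivation of that cited fact inside the paper's own toolkit: nonsymplecticity makes the generator \(g\) multiply \(\eta(\sigma_X)\) by a primitive \(p\)-th root of unity, so \(\eta(\sigma_X)\) lies in the kernel of \(\id+g+\cdots+g^{p-1}\) on \(\bL\otimes\mathbb{C}\), which is \(\bL_G\otimes\mathbb{C}\); hence \(\bL_G\neq 0\), and \autoref{Lemma su sottogruppi di ordine p} forces \(\rk(\bL_G)\) to be a \emph{positive} multiple of \(p-1\), so \(p-1\le\rk(\bL)=8\) and \(p\in\{2,3,5,7\}\) since \(8\) and \(9\) are not prime. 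Comparing the two: the paper's citation is shorter, but your argument has the merit of applying verbatim to nonsymplectic groups of \emph{isometries} of a marked pair, which is exactly how the proposition is stated --- effectiveness of such groups is only established afterwards in \autoref{non sympl autom are effective}, so no appeal to honest automorphisms is needed; it also makes explicit where the integrality/Galois-orbit input enters (through \autoref{Lemma su sottogruppi di ordine p}) and records the signature fact \(s_+\ge 2\) for \(\bL_G\) that the paper exploits later in \autoref{thm resume lambda}. One point worth acknowledging, which you do implicitly: your bound is the loose one \(p\le 9\) rather than \(p\le 8\) (the latter would need a nonzero invariant class, available only for effective actions), and the conclusion survives only because no prime lies in \(\{8,9\}\).
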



 \proof The result directly follows from \cite[Proposition 6]{Beauville_someremarks}.
 \endproof

The following result allows us to classify nonsymplectic automorphisms of prime order starting from their action on the second integral cohomology.
 
 \begin{proposition}\label{non sympl autom are effective}
 	If \(X\) is an irreducible holomorphic symplectic manifold of \(\OG6\) type and if \(G \subset \bO(\bL)\) is a nonsymplectic group of prime order \(p\) then \(G\) is effective.
 \end{proposition}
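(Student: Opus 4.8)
The plan is to produce, for a generator \(\varphi\) of \(G\), an actual manifold of \(\OG6\) type on which \(\varphi\) is realised by a biregular automorphism; once one element is realised, the whole of \(G=\langle\varphi\rangle\) is realised. The two engines are the surjectivity of the period map and the strong global Torelli theorem, the latter being available because \(\Mon^2(\OG6)=\bO^+(\bL)\) by Mongardi--Rapagnetta \cite{MROG6mon}. By \autoref{orders of non sympl} we already know \(p\in\{2,3,5,7\}\). Since \(G\) is nonsymplectic, \autoref{inclusion} places the period line inside \(\bL_G\otimes\mathbb{C}\), so \(\bL_G\) has at least two positive squares; combined with the signature analysis behind \autoref{thm resume lambda} this gives \(\Sgn(\bL_G)=(2,\ast)\), equivalently \(\Sgn(\bL^G)=(1,\ast)\), and this is the case I would treat.

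First I would diagonalise \(\varphi\otimes\mathbb{C}\) and isolate the eigenspace \(E_\zeta\subset\bL_G\otimes\mathbb{C}\) belonging to a primitive \(p\)-th root of unity \(\zeta\) (resp.\ to \(-1\) when \(p=2\)). Because \(\bL_G\) carries two positive squares, \(E_\zeta\) meets the period domain, so I can choose \([\sigma]\in\mathbb{P}(E_\zeta)\) with \((\sigma,\sigma)=0\), \((\sigma,\bar\sigma)>0\), and generic enough that the resulting weight-two Hodge structure has \(\NS=\bL^G\) and transcendental lattice \(\bL_G\) (cf.\ \autoref{TG=NS}). By surjectivity of the period map for \(\OG6\) type there is a marked pair \((X,\eta)\) realising \([\sigma]\), and by construction \(\varphi\) is a Hodge isometry of \(H^2(X,\mathbb{Z})\cong\bL\).

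Next I would verify \(\varphi\in\bO^+(\bL)\), i.e.\ that \(\varphi\) preserves the orientation of a maximal positive-definite subspace. For \(p=2\) this is immediate from \autoref{lemma spin}: \(\spin(\varphi)=(-1)^{s_+}=+1\) since \(s_+=2\). For odd \(p\) the isometry \(\varphi\) acts on \(\bL_G\otimes\mathbb{R}\) through rotation blocks, each of determinant \(+1\), and trivially on \(\bL^G\), so it again preserves the orientation. As \(\Mon^2(\OG6)=\bO^+(\bL)\), the map \(\varphi\) is a monodromy Hodge isometry, whence the strong Torelli theorem \cite[Theorem 1.3]{markman2011survey} furnishes a bimeromorphic map \(\psi\colon X\dashrightarrow X\) with \(\psi^*=\varphi\).

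The main obstacle is to promote \(\psi\) to a biregular automorphism, and this is exactly where the genericity chosen above is used. Since \(\varphi\) acts as the identity on \(\bL^G=\NS(X)\), it fixes every divisor class; because \(\Sgn(\bL^G)=(1,\ast)\) there is an ample class, fixed by \(\varphi\), so \(\varphi=\psi^*\) preserves the K\"ahler cone and therefore \(\psi\in\Aut(X)\). Finally \(\eta_*(\langle\psi\rangle)=\langle\varphi\rangle=G\), so \(G\) is effective. Conceptually the whole argument is steered by the signature identity \(\Sgn(\bL_G)=(2,\ast)\): it is what lets \(E_\zeta\) meet the period domain, what forces \(\varphi\in\bO^+\), and what supplies the invariant ample class needed to upgrade bimeromorphic to biregular.
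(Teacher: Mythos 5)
Your proposal runs on the same two engines as the paper --- \(\Mon^{2}(X)=\bO^{+}(\bL)\) from \cite{MROG6mon}, the spinor-norm computation of \autoref{lemma spin} for \(p=2\) together with the odd-order argument for \(p\neq 2\), and Markman's Hodge-theoretic Torelli theorem \cite[Theorem 1.3]{markman2011survey} --- but it is aimed at a different statement than the one to be proved. The paper never leaves the given marked pair \((X,\eta)\): since \(G\) is by definition a group of Hodge isometries of \(H^{2}(X,\mathbb{Z})\), it only remains to check that a generator \(\varphi\) lies in \(\bO^{+}(\bL)=\Mon^{2}(X)\) and that \(\varphi\) fixes a K\"ahler class of \(X\) (projectivity plus an invariant ample class in \(\bL^{G}\subseteq\NS(X)\)), after which \cite[Theorem 1.3]{markman2011survey} produces an automorphism of \(X\) itself. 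You instead discard \(X\): you use surjectivity of the period map to manufacture a new marked pair whose period is very general in an eigenspace of \(\varphi\), so that \(\NS=\bL^{G}\) and \(\T=\bL_{G}\), and you realize \(\varphi\) there. Since ``effective'' in this paper is defined relative to the given marked pair, what you prove is that \(G\) is effective on \emph{some} manifold of \(\OG6\) type. That existence statement is all the lattice classification in \autoref{thm resume} really consumes, but it is not the proposition as stated: if \(\NS(X)\) strictly contains \(\bL^{G}\), your construction produces no automorphism of the given \(X\) (an automorphism of a very general deformation need not specialize).

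There is also a genuine circularity at the step \(\Sgn(\bL_G)=(2,\rk(\bL_G)-2)\). \autoref{thm resume lambda} cannot ``give'' this equality: there the signature is a hypothesis of the classification, not a conclusion (the table even lists order-two isometries of \(\bLambda\) with \(\Sgn(\bLambda_G)=(3,\ast)\)). \autoref{inclusion} only yields \emph{at least} two positive squares in \(\bL_G\); excluding the third positive square of \(\bL\) requires a geometric input, which is exactly what the paper's (admittedly brief) appeal to projectivity \cite[\S 4]{Beauville_someremarks} and to an invariant ample class in \(\bL^G\subseteq\NS(X)\) supplies. The point is not cosmetic: \(-\id\in\bO(\bL)\) generates a nonsymplectic group of order \(2\) with coinvariant lattice all of \(\bL\), of signature \((3,5)\); it satisfies \(\spin(-\id)=-1\), lies outside \(\bO^{+}(\bL)\), and is never effective, since no automorphism can send a K\"ahler class to its negative. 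Your argument invokes the \((2,\ast)\) signature at three essential places --- to find a period in the eigenspace, to conclude \(\varphi\in\bO^{+}(\bL)\), and to produce the \(\varphi\)-fixed ample class --- so without an independent proof of it the argument does not close. The remaining ingredients (the genericity argument giving \(\NS=\bL^{G}\), the \(\bO^{+}\) computations, and the upgrade from bimeromorphic to biregular via a preserved K\"ahler class) are correct and standard.
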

 \proof

Since \(G\) is nonsymplectic, by \autoref{inclusion} \(\T(X) \subseteq \bL_G\) and \(\bL^G \subseteq \NS(X)\). By construction a nonsymplectic group of isometries is a group of Hodge isometries. We need to check that a K\"{a}hler class is sent to a K\"{a}hler class by the elements of \(G\). 
If \(X\) admits the action of a nonsymplectic group of automorphisms then \(X\) is projective \cite[\S 4]{Beauville_someremarks}.
 Since \(\bL^G\subseteq\NS(X)\), there exists an invariant ample class. More precisely in the nonsymplectic case the signature of the invariant lattice \(\bL^G\) is \((1,\rk(\bL^G)-1)\), and the signature of the coinvariant lattice \(\bL_G\) is \((2,\rk(\bL_G)-2)\). We know by \cite[Theorem 5.4(1)]{MROG6mon} that in the \(\OG6\) case \(\Mon^{2}(X) = \bO^{+}(\bL)\).
 If \(p \neq 2\) then \(\varphi\) preserves the orientation of the positive cone since \(p\) is an odd number and \(\varphi^{p}=\id\). Then we have \(\varphi \in \bO^{+}(\bL)\cong \Mon^{2}(X)\). If \(p=2\) then \(\spin(\varphi)=(-1)^{2}=1\) by \autoref{lemma spin}, and the signature of \(\bL_G\) is \((2, \rk(\bL_G)-2)\) so \(\varphi \in \bO^{+}(\bL)=\Mon^{2}(X)\). Using \cite[Theorem 1.3]{markman2011survey} we conclude.
 \endproof


\subsection{Admissible signature of invariant and coinvariant sublattices of \texorpdfstring{\(\bLambda\)}{Lambda}}\label{Admissible signature of invariant and coinvariant sublattices of Lambda}

\begin{proposition} \label{signature of SgX and TgX trivial}
    Let \(X\) be a manifold of \(\OG6\) type and let \(G \subset \bO(\bL)\) be a subgroup of prime order \(p\). Consider a primitive embedding
	\(\bL \hookrightarrow \bLambda\), and let \(r_1\) and \(r_2\) be the generators of \(\bR=\bL^{\perp} \subset \bLambda\). Consider \(G' \subset \bO(\bLambda)\) a group of isometries such that \(G'\) restricts to \(G\) on \(\bL\). If \(|G^{\sharp}|=1\) then \(\bL_G \cong \bLambda_{G'}\) hence \(\Sgn(\bL_G)=\Sgn(\bLambda_{G'})\) and  \(\Sgn(\bL^G)=\Sgn(\bLambda^{G'})-(2,0)\).
\end{proposition}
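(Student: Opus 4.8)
The plan is to transport the invariant/coinvariant decomposition across the primitive embedding \(\bL \hookrightarrow \bLambda\) by computing everything rationally first and then saturating. Write \(G = \langle g\rangle\) (it is cyclic, being of prime order) and let \(g'\) generate \(G'\), restricting to \(g\) on \(\bL\). Since \(g'\) preserves \(\bL\) it also preserves \(\bR=\bL^{\perp}\) and restricts there to some \(g''\in\bO(\bR)\). The first and decisive step is to arrange \(g''=\id\). As recalled before the statement, \cite[Corollary 1.5.2]{Nik79} identifies \(g^{\sharp}\) and \(g''^{\sharp}\) through the gluing isomorphism \(\bL^{\sharp}\cong\bR^{\sharp}\), and the hypothesis \(|G^{\sharp}|=1\) forces \(g^{\sharp}=\id\), hence \(g''^{\sharp}=\id\). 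For \(p\) odd this is automatic from order considerations alone: \(\bO(\bR)=\bO([2]^{\oplus2})\) is a \(2\)-group (of order \(8\)), so it has no nontrivial element of order \(p\) and \(g''=\id\). For \(p=2\), triviality on \(\bR^{\sharp}\) only pins \(g''\) down to a diagonal sign change, so here I would invoke \autoref{come si estende l'azione se azione banale su Ax}: since \(g^{\sharp}=\id\), the isometry \(g\) extends to \(\bLambda\) acting trivially on \(\bR\), and this is the extension \(g'\) (equivalently \(G'\)) we work with.

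Once \(g''=\id\), the computation is a routine orthogonal decomposition over \(\mathbb{Q}\). Using \(\bLambda\otimes\mathbb{Q}=(\bL\otimes\mathbb{Q})\oplus(\bR\otimes\mathbb{Q})\) orthogonally, with \(g'\) acting as \(g\) on the first summand and as the identity on the second, I obtain
\[
(\bLambda\otimes\mathbb{Q})^{g'} = (\bL\otimes\mathbb{Q})^{G}\oplus(\bR\otimes\mathbb{Q}) = (\bL^{G}\oplus\bR)\otimes\mathbb{Q}.
\]
Taking orthogonal complements inside \(\bLambda\otimes\mathbb{Q}\) then gives \(\bLambda_{G'}\otimes\mathbb{Q}=\bL_{G}\otimes\mathbb{Q}\), because the complement of \(\bR\otimes\mathbb{Q}\) is \(\bL\otimes\mathbb{Q}\) and, inside it, the complement of \(\bL^{G}\otimes\mathbb{Q}\) is \(\bL_{G}\otimes\mathbb{Q}\).

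It remains to upgrade these to integral statements. For the coinvariant lattice I would use saturation: \(\bLambda_{G'}\) is primitive in \(\bLambda\) by the fact recalled in \autoref{Isometries}, so it equals its rational span intersected with \(\bLambda\); since \(\bL\) is primitive in \(\bLambda\) and \(\bL_{G}\) is primitive in \(\bL\),
\[
\bLambda_{G'} = (\bL_{G}\otimes\mathbb{Q})\cap\bLambda = (\bL_{G}\otimes\mathbb{Q})\cap\bL = \bL_{G},
\]
whence \(\bL_{G}\cong\bLambda_{G'}\) and in particular \(\Sgn(\bL_{G})=\Sgn(\bLambda_{G'})\). For the invariant lattice only signatures are needed: \(\bL^{G}\oplus\bR\) is a finite-index sublattice of \(\bLambda^{G'}\), so the two have the same signature, and since \(\bR\cong[2]^{\oplus2}\) has signature \((2,0)\) I get \(\Sgn(\bLambda^{G'})=\Sgn(\bL^{G})+(2,0)\), i.e.\ \(\Sgn(\bL^{G})=\Sgn(\bLambda^{G'})-(2,0)\).

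The only genuinely delicate point is the first step when \(p=2\): the gluing condition leaves the diagonal sign changes on \(\bR\) undetermined, so the conclusion is sensitive to the choice of extension—an extension acting as \(-\id\) on \(\bR\) would shift the summand \((2,0)\) from the invariant to the coinvariant side and break both identities. This is exactly why the hypothesis \(|G^{\sharp}|=1\) must be fed through \autoref{come si estende l'azione se azione banale su Ax} to secure an extension trivial on \(\bR\); everything after that is standard saturation bookkeeping.
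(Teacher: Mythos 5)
Your proof is correct and takes essentially the same route as the paper: everything hinges on \autoref{come si estende l'azione se azione banale su Ax}, which supplies an extension of \(G\) acting trivially on \(\bR\), after which \(\bLambda_{G'}\cong\bL_G\) and the signature identities follow --- the paper's proof is exactly this one-line citation. Your rational decomposition and saturation bookkeeping, the observation that \(\bO(\bR)\) is a \(2\)-group (settling odd \(p\) automatically), and the caveat that for \(p=2\) one must work with the extension trivial on \(\bR\) rather than an arbitrary one, all make explicit what the paper leaves implicit.
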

\proof 
If \(|G^{\sharp}|=1\) then \(\bL_G \cong \bLambda_{G'}\) by \autoref{come si estende l'azione se azione banale su Ax}. 
\endproof

\begin{proposition}\label{signature of SgX and TgX nontrivial}
Let \(X\) be a manifold of \(\OG6\) type and let \(G \subset \bO(\bL)\) be a subgroup of prime order \(p\). Consider a primitive embedding
\(\bL \hookrightarrow \bLambda\), and let \(r_1\) and \(r_2\) be the generators of \(\bR=\bL^{\perp} \subset \bLambda\). Consider \(G' \subset O(\bLambda)\) a group of isometries such that \(G'\) restricts to \(G\) on \(\bL\). If \(|G^{\sharp}|=2\) then \(\bL_G \cong (r_1-r_2)^{\perp} \subset \bLambda_{G'}\) and \(\bL^G \cong (r_1+r_2)^{\perp} \subset \bLambda^{G'}\), hence \(\Sgn(\bL_G)=\Sgn(\bLambda_{G'})-(1,0)\) and  \(\Sgn(\bL^G)=\Sgn(\bLambda^{G'})-(1,0)\). 
\end{proposition}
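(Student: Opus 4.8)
The plan is to reduce everything to the $G'$-equivariant orthogonal splitting $\bLambda\otimes\mathbb{Q}=(\bL\otimes\mathbb{Q})\perp(\bR\otimes\mathbb{Q})$ and to pin down the action of $G'$ on the rank-two summand $\bR\cong[2]^{\oplus2}$. First I observe that the hypothesis $|G^{\sharp}|=2$ forces $p=2$: the image $G^{\sharp}$ of the cyclic group $G$ of prime order $p$ in $\bO(\bL^{\sharp})$ has order $1$ or $p$, so $|G^{\sharp}|=2$ gives $p=2$ and $G=\langle\varphi\rangle$ with $\varphi$ an involution satisfying $\varphi^{\sharp}\neq\id$. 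A short computation then identifies $\bO(\bL^{\sharp})$: writing $[v_1/2],[v_2/2]$ for the generators of $\bL^{\sharp}\cong(\mathbb{Z}/2\mathbb{Z})^{\oplus2}$ coming from the two copies of $[-2]$, the discriminant form equals $3/2$ on each of $[v_1/2]$ and $[v_2/2]$ and equals $1$ on $[v_1/2]+[v_2/2]$. Hence any isometry fixes $[v_1/2]+[v_2/2]$ and at most exchanges the other two classes, so $\bO(\bL^{\sharp})\cong\mathbb{Z}/2\mathbb{Z}$ and $\varphi^{\sharp}$ is this transposition.

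Next I transport this to $\bR$ through the gluing of the embedding $\bL\hookrightarrow\bLambda$. By \cite[Corollary 1.5.2]{Nik79} a generator $\varphi'$ of $G'$ restricts on $\bR$ to an involution $g''$ whose image $g''^{\sharp}$ corresponds to $\varphi^{\sharp}$ under the gluing isomorphism $\bL^{\sharp}\cong\bR^{\sharp}$, i.e.\ to the transposition of the two generators of $\bR^{\sharp}$. The involutions of $[2]^{\oplus2}$ inducing this transposition are the swap $r_1\leftrightarrow r_2$ and the anti-swap $r_1\mapsto-r_2$, $r_2\mapsto-r_1$; replacing $r_2$ by $-r_2$ if necessary, I may assume $r_1,r_2$ are orthogonal vectors of square $2$ with $g''(r_1)=r_2$ and $g''(r_2)=r_1$. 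Then $\bR^{G'}=\langle r_1+r_2\rangle$ and $\bR_{G'}=\langle r_1-r_2\rangle$ are positive-definite of rank $1$ (each isometric to $[4]$), so $\Sgn(\bR^{G'})=\Sgn(\bR_{G'})=(1,0)$.

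Finally I upgrade the rational picture to lattices. The splitting above is $G'$-equivariant, so $\bLambda^{G'}\otimes\mathbb{Q}=(\bL^G\otimes\mathbb{Q})\perp\mathbb{Q}(r_1+r_2)$ and $\bLambda_{G'}\otimes\mathbb{Q}=(\bL_G\otimes\mathbb{Q})\perp\mathbb{Q}(r_1-r_2)$. To obtain the integral identities $\bL^G=(r_1+r_2)^{\perp}\cap\bLambda^{G'}$ and $\bL_G=(r_1-r_2)^{\perp}\cap\bLambda_{G'}$, I take $x$ in the relevant right-hand side, decompose $x=\ell+b$ along the rational splitting with $b$ a rational multiple of $r_1\pm r_2$, and use $(r_1\pm r_2)^2=4$ together with $(x,r_1\pm r_2)=0$ to force $b=0$; primitivity of $\bL$ in $\bLambda$ then gives $x\in\bL$, and membership in $\bLambda^{G'}$ (resp.\ $\bLambda_{G'}$) gives $x\in\bL^G$ (resp.\ $\bL_G$). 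Adding the rank-one positive-definite contribution of $r_1\pm r_2$ yields $\Sgn(\bL^G)=\Sgn(\bLambda^{G'})-(1,0)$ and $\Sgn(\bL_G)=\Sgn(\bLambda_{G'})-(1,0)$.

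I expect the main obstacle to be this last passage from $\mathbb{Q}$ to $\mathbb{Z}$: the rational orthogonal decomposition is immediate, but the equality of lattices requires the nondegeneracy of $r_1\pm r_2$ together with primitivity of $\bL\hookrightarrow\bLambda$, and one must also dispose of the swap/anti-swap ambiguity for $g''$ by the harmless sign adjustment of a generator of $\bR$.
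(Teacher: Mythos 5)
Your proof is correct and follows essentially the same route as the paper's: both use the gluing isomorphism \(\bL^{\sharp}\cong\bR^{\sharp}\) coming from unimodularity of \(\bLambda\) to deduce that any extension of \(G\) must exchange the generators of \(\bR^{\sharp}\), so that \(r_1+r_2\) spans \(\bR^{G'}\) and \(r_1-r_2\) spans \(\bR_{G'}\), which accounts for the \((1,0)\) shift in signatures. The only difference is one of completeness, not of method: the paper constructs the specific extension \(G'=G_{|\bL}\oplus\psi_{|\bR}\) with \(\psi\) the swap and states the resulting memberships, whereas you analyze an arbitrary \(G'\) restricting to \(G\) (classifying its restriction to \(\bR\) as the swap or anti-swap) and spell out the passage from the rational splitting to the integral identities, details the paper leaves implicit.
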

\proof
We extend the action on \(\bLambda\) by the isometry \(\psi=\begin{pmatrix}
0 & 1 \\ 1 & 0
\end{pmatrix}\) on \(\bR\). Since \(\bLambda\) is unimodular, the gluing subgroup of \(\bL \oplus \bR \subset \bLambda\) is \(H = (\mathbb{Z}/2\mathbb{Z})^{\oplus2}\). By \cite[Proposition~1.5.1]{Nik79} there is an isometry \(\bL^{\sharp} \rightarrow \bR(-1)^{\sharp}\). Since \(G^{\sharp}\) acts on \(\bL^{\sharp}\) exchanging the generators, \(\psi^{\sharp}\) exchanges the generators of \(\bR^{\sharp}\). Since \(G'=G_{|\bL} \oplus \psi_{|\bR}\) this gives \((r_{1} + r_{2}) \in \Lambda^{G'}\) and \((r_1-r_2) \in \bLambda_{G'}\).
\endproof

\subsection{Existence of the actions of prime order on \texorpdfstring{\(\bLambda\)}{Lambda}}\label{Existence of the actions of prime orders on Lambda}

Now we give a classification of all the possible \(p\)-elementary sublattices \(S\) of \(\bLambda\), and their orthogonal complements \(T=S^{\perp} \subset \bLambda\), combining the constraints on the signature of \autoref{signature of SgX and TgX trivial} and of \autoref{signature of SgX and TgX nontrivial} with the content of \autoref{TG=NS}. 
More precisely, for \(p=2\) we look for \(2\)-elementary sublattices \(S\) of \(\bLambda\) with \(\Sgn(S)=(2, \rk(S)-2)\) and \(\Sgn(S)=(3, \rk(S)-3)\) and for \(p \in \{3,5,7\}\) we look for \(p\)-elementary sublattices \(S\) of \(\bLambda\) with \(\Sgn(S)=(2, \rk(S)-2)\). The pairs \((T,S)\) obtained in this way are candidates to be the invariant and the coinvariant sublattices with respect to \(G \subset \bO(\bLambda)\) of prime order \(p\). If \(p=2\) we know that all the ~ \(2\)-elementary lattices that we find correspond to an involution which acts as \(-\id\) on ~ \(\bLambda_G\). On the other hand, if \(p\) is an odd prime number we need the following criterion to determine if the pair \((T,S)\) corresponds to an invariant and a coinvariant sublattice respectively, with respect to \(G \subset \bO(\bLambda)\) of prime order \(p\). 
\begin{lemma}\label{lemma S si immerge nella K3}
If \(G \subset \bO(\bLambda)\) is a group of prime order \(p \in\{2,3,5,7\}\) and \(\Sgn(\bLambda_G)=(2, \rk(\bLambda_G)-2)\) then there exists a primitive embedding \(\bLambda_G \hookrightarrow H^{2}(X,\mathbb{Z})\) where \(X\) is a K3 surface.
\end{lemma}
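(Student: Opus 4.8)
The plan is to reduce the statement to Nikulin's existence criterion for primitive embeddings of an even lattice into an even unimodular lattice, applied to the K3 lattice $\U^{\oplus3}\oplus\E_8(-1)^{\oplus2}$. This is the unique even unimodular lattice of signature $(3,19)$ and is realized as $H^{2}(X,\mathbb{Z})$ for every K3 surface $X$, so it suffices to produce an abstract primitive embedding $\bLambda_G\hookrightarrow\U^{\oplus3}\oplus\E_8(-1)^{\oplus2}$. For this I would invoke \cite[Corollary~1.12.3]{Nik79}: an even lattice $S$ of signature $(s_{+},s_{-})$ with discriminant group $S^{\sharp}$ admits a primitive embedding into an even unimodular lattice of signature $(l_{+},l_{-})$ whenever $s_{+}\le l_{+}$, $s_{-}\le l_{-}$ and $l(S^{\sharp})\le (l_{+}-s_{+})+(l_{-}-s_{-})$.

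First I would record the relevant invariants of $S:=\bLambda_G$. As a coinvariant lattice it is a primitive sublattice of the even lattice $\bLambda=\U^{\oplus5}$, hence even, and by \autoref{lemma p elem} it is moreover $p$-elementary; in any case the general bound $l(S^{\sharp})\le\rk(S)$ holds. The hypothesis gives $\Sgn(S)=(2,\rk(S)-2)$, and combining this with the rational orthogonal decomposition $\Sgn(\bLambda^{G})+\Sgn(\bLambda_G)=(5,5)$ forces the negative index $\rk(S)-2\le 5$, so that $\rk(S)\le 7$ and therefore $l(S^{\sharp})\le 7$.

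Next I would check the three conditions of the criterion against $(l_{+},l_{-})=(3,19)$. The signature inequalities $s_{+}=2\le 3$ and $s_{-}=\rk(S)-2\le 5\le 19$ hold with room to spare, and the length condition reads
\[
l(S^{\sharp})\le (3-2)+\bigl(19-(\rk(S)-2)\bigr)=22-\rk(S);
\]
since $l(S^{\sharp})\le\rk(S)\le 7\le 15\le 22-\rk(S)$, this holds as well, and strictly. The existence of the primitive embedding then follows directly from \cite[Corollary~1.12.3]{Nik79}, and composing with any isometry $\U^{\oplus3}\oplus\E_8(-1)^{\oplus2}\cong H^{2}(X,\mathbb{Z})$ for a K3 surface $X$ finishes the argument. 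Note that the value of the prime $p$ plays no role beyond fixing the rank range: the signature hypothesis $\Sgn(\bLambda_G)=(2,\rk(\bLambda_G)-2)$ does all the work, so the proof is uniform in $p$.

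There is essentially no obstacle here beyond bookkeeping the invariants. The one place where Nikulin's criterion can fail to apply cleanly is the boundary case $l(S^{\sharp})=(l_{+}-s_{+})+(l_{-}-s_{-})$, where one must additionally control the genus of the orthogonal complement of $S$. Because $\rk(\bLambda_G)$ is bounded well below the rank $22-\rk(\bLambda_G)$ of that complement, all of the inequalities above are strict and this boundary case never arises; the embedding therefore exists unconditionally.
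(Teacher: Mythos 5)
Your proof is correct and follows essentially the same route as the paper's: both bound \(\rk(\bLambda_G)\le 7\) via the signature \((5,5)\) of \(\bLambda\), then apply \cite[Corollary 1.12.3]{Nik79} to the K3 lattice of signature \((3,19)\), checking the signature inequalities and the length inequality \(l(\bLambda_G^{\sharp})\le\rk(\bLambda_G)\le 7 < 22-\rk(\bLambda_G)\), whose strictness avoids any boundary-case subtlety. The only cosmetic difference is that the paper also records the congruence \(19-3\equiv 0 \pmod 8\) explicitly, which in your formulation is implicit in the existence of the concrete target lattice \(\U^{\oplus3}\oplus\E_8(-1)^{\oplus2}\).
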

\proof
In our assumptions the lattice \(\bLambda_G\) is a \(p\)-elementary lattice with \(\Sgn(\bLambda_G)=(2,\rk(\bLambda_G)-2)\). Since \(\bLambda_{G}\) is a sublattice of \(\bLambda\) and \(\Sgn(\bLambda)=(5,5)\), we have \(\rk(\bLambda_G) \leq 7\). The signature of the lattice \(H^{2}(X,\mathbb{Z})\) is \((3,19)\), hence we can apply \cite[Corollary 1.12.3]{Nik79} to prove that there exists a primitive embedding of \(\bLambda_G\) in the unimodular lattice \(H^2(X,\mathbb{Z})\). We have \(19-3=16\equiv 0 \ (8)\), moreover \(3-2 \geq 0\) and \(19-(\rk(\bLambda_G)-2) \geq 0\) since \((\rk(\bLambda_G)-2)\leq 5 \) by assumption. The inequality \(22-\rk(\bLambda_G) > l(\bLambda_G^{\sharp})\) is verified since \(22-\rk(\bLambda_G)\geq 15\) and \(l(\bLambda_G^{\sharp}) \leq 7\).
\endproof
\begin{definition}
Let \(X\) be a K3 surface. We denote by \(\mathbf{M}\) the isometry class of \(H^{2}(X,\mathbb{Z})\).
\end{definition}

\begin{proposition}\label{k3lattice}	Let \(p \in \{2,3,5,7\}\) and let \(S\) be a \(p\)-elementary lattice such that there exists a primitive embedding \(S \hookrightarrow \bLambda\) and such that \(\Sgn(S)=(2,\rk(S)-2)\). If there exists a group \(G \subset \bO(\bLambda)\) of prime order \(p\) such that \(S\cong \bLambda_G\) then there exists a K3 surface \(X\) with a marking \(H^{2}(X,\mathbb{Z}) \to \mathbf{M}\) and a nonsymplectic group \(G' \subset \bO(\mathbf{M})\) of prime order \(p\) such that \(S \cong \mathbf{M}_{G'}\).
	\end{proposition}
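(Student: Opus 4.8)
The plan is to transport the order-$p$ isometry already living on $S$ to the K3 lattice $\mathbf{M}$ and then to produce a compatible period. Write $g$ for a generator of $G$, so that $g$ restricts to an isometry of $S\cong\bLambda_{G}$ of order $p$ with $S^{\langle g\rangle}=0$, since a coinvariant lattice carries no nonzero invariant vector. The first and most important step is the discriminant-theoretic observation that $g$ acts trivially on $S^{\sharp}$. Indeed $\bLambda^{G}\oplus\bLambda_{G}$ has finite index in the unimodular lattice $\bLambda$, so by \cite[Proposition~1.5.1]{Nik79} the overlattice $\bLambda$ is the graph of a group isomorphism $\phi\colon(\bLambda^{G})^{\sharp}\to(\bLambda_{G})^{\sharp}$ (this is the isomorphism recorded in \autoref{lemma p elem}). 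Now $g$ fixes $\bLambda^{G}$ pointwise, preserves $\bLambda_{G}$, and preserves $\bLambda$, hence it preserves this graph; since $g^{\sharp}$ is the identity on $(\bLambda^{G})^{\sharp}$ and $\phi$ is surjective, it follows that $g^{\sharp}=\id$ on $(\bLambda_{G})^{\sharp}=S^{\sharp}$.

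Next I would extend $g$ to $\mathbf{M}$. By \autoref{lemma S si immerge nella K3} there is a primitive embedding $S\hookrightarrow\mathbf{M}$; put $T=S^{\perp}\subset\mathbf{M}$. The pair $(g,\id_{T})$ glues to an isometry of $\mathbf{M}$ precisely when it respects the gluing subgroup, and because $g^{\sharp}=\id$ on $S^{\sharp}$ and $\id_{T}$ acts trivially on $T^{\sharp}$, the compatibility hypothesis of \cite[Corollary~1.5.2]{Nik79} holds automatically. This produces $g'\in\bO(\mathbf{M})$ with $g'|_{S}=g$ and $g'|_{T}=\id$. Being nontrivial of order dividing the prime $p$, the isometry $g'$ has order exactly $p$; set $G'=\langle g'\rangle$. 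Since $g'$ fixes $T$ and has no nonzero invariants on $S$, we get $(\mathbf{M}\otimes\mathbb{Q})^{g'}=T\otimes\mathbb{Q}$, whence $\mathbf{M}^{G'}=T$ (as $T$ is primitive) and therefore $\mathbf{M}_{G'}=T^{\perp}=S$.

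It remains to find a K3 surface making $G'$ genuinely nonsymplectic, and this is where the hypothesis $\Sgn(S)=(2,\rk(S)-2)$ enters. On $S\otimes\mathbb{C}$ the isometry $g'$ has only eigenvalues that are primitive $p$-th roots of unity when $p$ is odd, and the eigenvalue $-1$ when $p=2$, because $S^{g}=0$. For $p$ odd two eigenspaces are orthogonal for the $\mathbb{C}$-bilinear form unless their eigenvalues are complex conjugate, so each eigenspace $V_{\zeta}$ is isotropic; decomposing $S\otimes\mathbb{R}$ into the real planes attached to the conjugate pairs $\{\zeta^{j},\bar\zeta^{j}\}$ and matching signatures, the two-dimensional positive part is supported on a single such pair, so the Hermitian form $v\mapsto(v,\bar v)$ is positive on a line in exactly one eigenspace $V_{\zeta_{0}}$ with $\zeta_{0}\neq1$; choosing $\sigma$ on that line gives $\sigma^{2}=0$ and $\sigma\bar\sigma>0$. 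For $p=2$ one has $g'=-\id$ on $S$, and the positive plane yields an isotropic $\sigma$ with $\sigma\bar\sigma>0$ directly, automatically a $(-1)$-eigenvector. In either case the surjectivity of the period map provides a marked K3 surface $(X,\eta)$ with $\eta(\sigma_{X})=\sigma$, and since $g'(\sigma)$ is a nontrivial multiple of $\sigma$, the group $G'$ acts nontrivially on $\mathbb{C}\eta(\sigma_{X})$; thus $G'$ is nonsymplectic and $S\cong\mathbf{M}_{G'}$, as required.

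I expect the main obstacle to be the extension step: one must guarantee that $g$ prolongs to $\mathbf{M}$ while fixing $T$, and this is exactly why the triviality of $g^{\sharp}$ on $S^{\sharp}$ — itself a consequence of the unimodularity of $\bLambda$ — is the technical heart of the argument. The signature bookkeeping that confines the period to a single nontrivial eigenspace is the second delicate point, but it is forced cleanly by the $(2,\ast)$ signature.
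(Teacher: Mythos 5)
Your proof is correct and takes essentially the same route as the paper's: triviality of the induced action on \((\bLambda_G)^{\sharp}\) (which the paper gets by citing \cite[Corollary 1.5.2]{Nik79} where you argue directly via the graph of the discriminant isomorphism), the primitive embedding \(S \hookrightarrow \mathbf{M}\) from \autoref{lemma S si immerge nella K3}, gluing \(g\) with \(\id\) on the orthogonal complement to obtain \(G' \subset \bO(\mathbf{M})\) with \(\mathbf{M}_{G'} \cong S\), and surjectivity of the period map to produce the K3 surface. Your explicit eigenvalue/signature analysis for choosing the period is a more careful rendering of the step the paper dispatches with a brief appeal to the Torelli theorem, but it is the same argument, not a different one.
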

\proof
Let \(G \subset \bO(\bLambda)\) be a group of prime order \(p\) and let \(\bLambda_G\) be the coinvariant lattice, a \(p\)-elementary lattice such that \((\bLambda_G)^{\sharp} \cong (\bLambda^G)^{\sharp}\). Since the action on \((\bLambda^G)^{\sharp}\) is trivial then also the action on \((\bLambda_G)^{\sharp}\) is trivial by \cite[Corollary 1.5.2]{Nik79}. By \autoref{lemma S si immerge nella K3} there exists a primitive embedding ~\(\bLambda_G \hookrightarrow \mathbf{M}\). Recalling the construction of the gluing subgroup in \autoref{Embeddings of lattices}, since \(\mathbf{M}\) is unimodular, the gluing subgroup \(K\) coincide with \((\bLambda_G)^{\sharp}\). Since the action on \((\bLambda_G)^{\sharp}\) is trivial, then also the action on the discriminant group of the orthogonal complement \(\bLambda_G^{\perp}\subset \mathbf{M}\) is trivial due to the anti-isometry between \((\bLambda_G)^{\sharp}\) and \((\bLambda_G^{\perp})^{\sharp}\). In order to define a group \(G' \subset \bO(\mathbf{M})\) such that \(G'\) restricts to \(G\) on \(\bLambda_G\) and such that \(\bLambda_G \cong \mathbf{M}_{G'}\) we need an isometry of \(\bLambda_G^{\perp} \subset \mathbf{M}\) such that the induced action on \((\bLambda_G^{\perp})^{\sharp}\) is trivial. We show now that this isometry is \(\id_{|\bLambda_G^{\perp}}\). Indeed the action of \(G\) on \(\bLambda_G\) extends to an action on \(\mathbf{M}\) by gluing \(G_{|\bLambda_G}\) and \(\id_{|\bLambda_G^{\perp}}\) since the action of both on the gluing subgroups is trivial.
In this way we get \(\bLambda_G \cong \mathbf{M}_{G'}\) by construction. On the complex space \(\mathbf{M}\otimes \mathbb{C}\) we have a weight-two Hodge structure, where \((\mathbf{M}\otimes \mathbb{C})^{2,0}\oplus (\mathbf{M} \otimes \mathbb{C})^{0,2}=\mathbf{M}_{G'}\otimes\mathbb{C}\) is of signature \((2,\rk(S)-2)\). By Torelli theorem there exists a K3 surface \(X\) such that  \(H^{2,0}(X) \oplus H^{0,2}(X) = \mathbf{M}_{G'} \otimes \mathbb{C}\) and the action on the integral cohomology is nonsymplectic as we want.
\endproof

For \(p=5,7\) Artebani--Sarti--Taki \cite{AST11} and for \(p=3\) Artebani--Sarti \cite{AS3} find a classification of invariant and coinvariant sublattices with respect to a nonsymplectic group of prime order \(p\) on a K3 surface.
As a consequence using \autoref{k3lattice} we can determine which \(p\)-elementary sublattices \(\bLambda_G\) classified in \autoref{tab:Lambda} are the coinvariant sublattices with respect to an isometry of prime order \(p\) of \(\bLambda\). If this is the case there exists a group \(G \subset \bO(\bLambda)\) of prime order \(p\) such that \((\bLambda^G, \bLambda_G)\) are the invariant and the coinvariant sublattices. 
\subsection{Proof of \autoref{thm resume lambda}}\label{Proof of theorem 1.1}
 
In the following we prove \autoref{thm resume lambda}, dividing it into five cases.
\begin{proposition}
	If the order of \(G \subset \bO(\bLambda)\) is \(2\) and if \(\Sgn(\bLambda_G)=(2,\rk(\bLambda_G)-2)\) then there are fourteen possible pairs of invariant and coinvariant lattices \((\bLambda^G,\bLambda_G)\) with respect to the action of \(G\) on \(\bLambda\). 
\end{proposition}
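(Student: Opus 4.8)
The plan is to reduce the statement to a finite enumeration of pairs of $2$-elementary lattices and then to apply Nikulin's classification. Write $S:=\bLambda_G$ and $T:=\bLambda^G$. Since $\bLambda$ is unimodular and $G$ has prime order $2$, \autoref{lemma p elem} gives that $S$ and $T$ are $2$-elementary with $S^{\sharp}\cong T(-1)^{\sharp}$; in particular they share the length $a:=l(S^{\sharp})=l(T^{\sharp})$ and the invariant $\delta:=\delta(S)=\delta(T)$ of \autoref{delta}. As $\bLambda\otimes\mathbb{Q}=(\bLambda^G\oplus\bLambda_G)\otimes\mathbb{Q}$ has signature $(5,5)$ and $\Sgn(S)=(2,\rk(S)-2)$ by hypothesis, we get $\Sgn(T)=(3,\rk(T)-3)$. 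Putting $r:=\rk(S)$, so that $\rk(T)=10-r$ and $\Sgn(T)=(3,7-r)$, the two signature conditions force $2\le r\le 7$.

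First I would note that, for $p=2$, the correspondence is exact in both directions. As recalled just before \autoref{lemma S si immerge nella K3}, any pair $(T,S)$ of $2$-elementary lattices with $S^{\sharp}\cong T(-1)^{\sharp}$ and $\Sgn(T)+\Sgn(S)=(5,5)$ comes from a genuine involution: gluing $-\id_S$ with $\id_T$ along an anti-isometry $q_S\cong -q_T$ is legitimate precisely because $-\id$ acts trivially on the $2$-elementary group $S^{\sharp}$, and the resulting even unimodular lattice of signature $(5,5)$ is isometric to $\bLambda$. Hence it suffices to count such pairs up to isometry; since every lattice occurring is unique in its genus (\cite[Theorem 3.6.2]{Nik79} for the indefinite ones, \cite[Theorem 1.14.2]{Nik79} for the definite lattices arising at $r=2$ and $r=7$), each admissible pair contributes exactly one entry to \autoref{tab:Lambda}.

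The enumeration then ranges over the triples $(r,a,\delta)$ for which a $2$-elementary lattice of signature $(2,r-2)$ \emph{and} one of signature $(3,7-r)$ both exist with the common invariants $(a,\delta)$. Here I would invoke Nikulin's existence conditions for even $2$-elementary lattices: the parity relation $a\equiv\rk\pmod 2$; the fact that $\delta=0$ forces $a$ to be even and the signature to be divisible by $4$, which—since the signature of $S$ equals $4-r$—singles out $r=4$; and the Milgram/Gauss-sum compatibility between the signature mod $8$ and the discriminant form. The anti-isometry $q_S\cong -q_T$ required for the gluing is then automatic, since $S$ and $T$ have opposite signatures and share $(a,\delta)$. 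Running through $r=2,3,4,5,6,7$ and listing the admissible $(a,\delta)$ yields $1,2,5,3,2,1$ pairs respectively, hence fourteen in total.

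I expect the main obstacle to be the careful bookkeeping of the signature-mod-$8$ constraint, which is exactly what eliminates several a priori possible triples: the unimodular option $a=0$ at $r\in\{2,6\}$ (no even unimodular lattice has signature $\pm2\pmod 8$), and the case $a=1$ at $r=7$, where a length-one discriminant form has signature $\pm1\pmod 8$ and so cannot be carried by a definite lattice of signature $(3,0)$. A second delicate point occurs at $r=4$, $a=4$, $\delta=0$: the two a priori distinct rank-four discriminant forms of signature $0\pmod 8$ coincide, by the standard relation $u\oplus u\cong v\oplus v$ of finite quadratic forms, so they determine a single genus rather than two. Once these points are settled, the tally of fourteen is mechanical.
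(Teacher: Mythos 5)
Your proposal is correct and follows essentially the same route as the paper: both arguments reduce to classifying pairs of \(2\)-elementary lattices with complementary signatures \((2,r-2)\) and \((3,7-r)\) and matching invariants \((a,\delta)\) via Nikulin's existence/uniqueness results, with realization for \(p=2\) handled by gluing \(-\id\) on \(\bLambda_G\) with \(\id\) on \(\bLambda^G\) (a point the paper notes in \autoref{Existence of the actions of prime orders on Lambda} rather than inside the proof). Your rank-by-rank tally \(1,2,5,3,2,1\) for \(r=2,\dots,7\) agrees exactly with \autoref{tab:Lambda}, and your explicit treatment of the exclusions (\(a=0\) at \(r\in\{2,6\}\), \(a=1\) at \(r=7\), \(\delta=0\) only at \(r=4\), and \(u\oplus u\cong v\oplus v\)) just makes explicit what the paper delegates to \cite[Theorem 3.6.2]{Nik79} and \cite[Corollary 1.13.5]{Nik79}.
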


\begin{proof}
	Since \(|G|=2\) then \(\bLambda^G\) and \(\bLambda_G\) are \(2\)-elementary lattices. It holds that \(\rk(\bLambda^G)=10-\rk(\bLambda_G)\), hence \(a \leq 10-\rk(\bLambda_G)\). We use \autoref{Lemma su sottogruppi di ordine p} to bound the number \(a\) that occurs there. For each possible value of \(a\) we apply \cite[Theorem 3.6.2]{Nik79} and \cite[Corollary 1.13.5]{Nik79} and we obtain the classification in \autoref{tab:Lambda}.
\end{proof}

\begin{proposition}\label{p=2 non trivial}
If the order of \(G \subset \bO(\bLambda)\) is \(2\) and if \(\Sgn(\bLambda_G)=(3,\rk(\bLambda_G)-3)\) then there are thirteen possible pairs of invariant and coinvariant lattices \((\bLambda^G,\bLambda_G)\) with respect to the action of \(G\) on \(\bLambda\).
\end{proposition}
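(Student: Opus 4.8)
The plan is to run the same lattice-theoretic argument as in the preceding proposition, the only change being the positive part of the signature. Since $|G|=2$ and $\bLambda=\U^{\oplus5}$ is unimodular, \autoref{lemma p elem} guarantees that both $\bLambda^G$ and $\bLambda_G$ are even $2$-elementary lattices and that $(\bLambda_G)^{\sharp}\cong(\bLambda^G(-1))^{\sharp}$. In particular the two discriminant groups have the same length $a$, and since the invariant $\delta$ of \autoref{delta} is preserved by the $(-1)$-twist, $\bLambda^G$ and $\bLambda_G$ also share the same value of $\delta$.

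First I would pin down the numerical range of the invariants. From $\Sgn(\bLambda)=(5,5)$ and the hypothesis $\Sgn(\bLambda_G)=(3,\rk(\bLambda_G)-3)$ one gets $\rk(\bLambda^G)=10-\rk(\bLambda_G)$ and $\Sgn(\bLambda^G)=(2,8-\rk(\bLambda_G))$; nondegeneracy of both signatures then forces $3\le\rk(\bLambda_G)\le 8$. \autoref{Lemma su sottogruppi di ordine p} gives $\bLambda/(\bLambda^G\oplus\bLambda_G)\cong(\mathbb{Z}/2\mathbb{Z})^{a}$ together with embeddings of this group into each discriminant group, so $0\le a\le\min(\rk(\bLambda^G),\rk(\bLambda_G))$.

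Next I would enumerate the admissible triples $(r,a,\delta)$ with $r=\rk(\bLambda_G)$. Because $\bLambda$ is unimodular and $\bLambda^G,\bLambda_G$ are mutually orthogonal primitive sublattices, the discriminant forms are automatically anti-isometric, $q_{\bLambda_G}\cong-q_{\bLambda^G}$; hence a triple is realized precisely when \emph{both} an even $2$-elementary lattice of invariants $\big((3,r-3),a,\delta\big)$ and one of invariants $\big((2,8-r),a,\delta\big)$ exist. For each candidate I would test Nikulin's existence conditions \cite[Corollary 1.13.5]{Nik79}, discarding the tuples that violate them (in particular the constraint $\Sgn(q)\equiv t_{+}-t_{-}\pmod 8$, the congruence $t_{+}-t_{-}\equiv 0\pmod 4$ forced by $\delta=0$, and the boundary conditions that appear when $a$ equals $0$, $1$, or the rank). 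Each surviving triple determines a single genus, and uniqueness in the genus---via \cite[Theorem 3.6.2]{Nik79} in the indefinite case and \cite[Theorem 1.14.2]{Nik79} in the definite endpoint $r=3$---yields exactly one pair $(\bLambda^G,\bLambda_G)$. As recalled before the statement, for $p=2$ every such pair is effectively realized by the involution acting as $\id$ on $\bLambda^G$ and as $-\id$ on $\bLambda_G$, so no further obstruction arises.

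The main obstacle is the bookkeeping: one must run through all ranks $3\le r\le 8$ and all pairs $(a,\delta)$, check the Nikulin conditions on \emph{both} members of each candidate pair simultaneously, and verify that exactly thirteen survive. Since the count differs from the fourteen obtained in the $(2,\cdot)$ case, tracking where a case is gained or lost---typically at the extremal values of $a$ and at the positive-definite endpoint $r=3$, where the definite classification replaces the indefinite one---is the delicate part that requires the careful comparison recorded in \autoref{tab:Lambda}.
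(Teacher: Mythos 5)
Your proposal is correct and follows essentially the same route as the paper: the paper's proof likewise bounds the invariants via \(\rk(\bLambda^G)=10-\rk(\bLambda_G)\) and \autoref{Lemma su sottogruppi di ordine p}, then applies Nikulin's existence criterion \cite[Corollary 1.13.5]{Nik79} and uniqueness in the genus \cite[Theorem 3.6.2]{Nik79} (with \cite[Theorem 1.14.2]{Nik79} covering the definite endpoint, as noted in \autoref{lattices}), and records the resulting thirteen pairs in \autoref{tab:Lambda}, using that for \(p=2\) every pair is realized by the involution acting as \(-\id\) on \(\bLambda_G\). Your additional bookkeeping (matching \((a,\delta)\) on both members of the pair and checking the parity constraints) is exactly the computation the paper leaves implicit.
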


\begin{proof}
	Recall that \(\rk(\bLambda^G)=10-\rk(\bLambda_G)\), hence \(a \leq 10-\rk(\bLambda_G)\). The result is a direct application of \autoref{Lemma su sottogruppi di ordine p}, \cite[Theorem 3.6.2]{Nik79} and \cite[Corollary 1.13.5]{Nik79}. The classification is summarized in \autoref{tab:Lambda}.
\end{proof}

\begin{proposition}	If the order of \(G \subset \bO(\bLambda)\) is \(3\) and if \(\Sgn(\bLambda_G)=(2,\rk(\bLambda_G)-2)\) then there are five possible pairs of invariant and coinvariant lattices ~ \((\bLambda^G,\bLambda_G)\) with respect to the action of \(G\) on \(\bLambda\).
\end{proposition}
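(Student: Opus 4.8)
The plan is to mirror the two preceding propositions, replacing the involution analysis by the constraints available for the odd prime $p=3$. Throughout I write $r=\rk(\bLambda_G)$ and let $a$ be the common length of the discriminant groups: by \autoref{lemma p elem} both $\bLambda^G$ and $\bLambda_G$ are $3$-elementary and $(\bLambda_G)^{\sharp}\cong(\bLambda^G(-1))^{\sharp}$, so they share the same $a$ and opposite discriminant forms. Since $\bLambda^G$ and $\bLambda_G$ are mutually orthogonal and span $\bLambda\otimes\mathbb{Q}$, one has $\rk(\bLambda^G)=10-r$ and $\Sgn(\bLambda^G)=(5,5)-(2,r-2)=(3,7-r)$.

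First I would pin down the admissible ranks. By \autoref{Lemma su sottogruppi di ordine p} the difference $p-1=2$ divides $r$, and writing $r=2m$ one gets $a\le m$; moreover $a\le\rk(\bLambda^G)=10-r$. The hypothesis $\Sgn(\bLambda_G)=(2,r-2)$ together with the primitive embedding $\bLambda_G\hookrightarrow\bLambda$ of signature $(5,5)$ forces $r-2\le 5$, hence $r\in\{2,4,6\}$, i.e.\ $m\in\{1,2,3\}$ with $a\le m$. For the borderline rank $r=2=p-1$ I would invoke \autoref{discr divisibile per p alla p-2}: the discriminant $3^{a}$ must satisfy that $3^{a-1}$ is a square, forcing $a=1$.

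Next, for each pair $(r,a)$ I would decide existence and uniqueness of the $3$-elementary lattice $\bLambda_G$ of signature $(2,r-2)$ and length $a$, and then read off $\bLambda^G$ from the anti-isometry of discriminant forms. The lattices $\bLambda^G$ of signature $(3,7-r)$ are always indefinite, hence unique in their genus by \cite[Theorem 3.6.2]{Nik79}; the same theorem applies to $\bLambda_G$ in the indefinite cases $r=4,6$, while for $r=2$ the lattice $\bLambda_G$ is the positive definite $\A_2$, unique by direct inspection. Existence is governed by \cite[Corollary 1.13.5]{Nik79}, the decisive ingredient being the congruence $\Sgn(\bLambda_G)\equiv\Sgn(q_{\bLambda_G})\pmod 8$ between the signature and the Gauss--Milgram signature of the discriminant form. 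Evaluating this for the two rank-one $3$-elementary forms discards $a=1$ when $r=4$ and $a\in\{0,2\}$ when $r=6$, leaving exactly $(r,a)\in\{(2,1),(4,0),(4,2),(6,1),(6,3)\}$. Note that $(4,0)$ yields the unimodular pair $\bLambda_G\cong\U^{\oplus 2}$, $\bLambda^G\cong\U^{\oplus 3}$. This produces five candidate pairs, to be recorded in \autoref{tab:Lambda}.

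Finally, I would confirm that each candidate actually arises as $(\bLambda^G,\bLambda_G)$ for some order-$3$ subgroup $G\subset\bO(\bLambda)$. Since the action on the discriminant group is trivial (again \autoref{lemma p elem}), \autoref{k3lattice} transports the question to K3 surfaces: each $\bLambda_G$ embeds primitively into $\mathbf{M}$, and I would match it against the classification of coinvariant lattices of nonsymplectic automorphisms of order $3$ on K3 surfaces of Artebani--Sarti \cite{AS3}. The one case needing care is the unimodular $\bLambda_G\cong\U^{\oplus 2}$: it does carry a fixed-point-free order-$3$ isometry, for instance the one induced on the index-$3$ even unimodular overlattice of $\A_2\oplus\A_2(-1)$ by the product of the hexagonal rotations (which act trivially on the discriminant group, hence extend). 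I expect the genuine obstacle to be exactly this realizability bookkeeping, peculiar to odd $p$ — checking via \autoref{k3lattice} that every numerically admissible $3$-elementary lattice, including the unimodular one, underlies an actual order-$3$ action — rather than the lattice enumeration itself, which is a routine application of Nikulin's theorems.
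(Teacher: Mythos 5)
Your proposal is correct and follows essentially the same route as the paper: the rank/length bounds from \autoref{Lemma su sottogruppi di ordine p} and \autoref{discr divisibile per p alla p-2}, the enumeration of admissible $3$-elementary lattices via Nikulin's existence/uniqueness results (your explicit Milgram congruence checks are exactly what the paper's citations to \cite[Corollary 1.13.5]{Nik79} and \cite[\S1]{RS81} encapsulate), and realizability via \autoref{k3lattice} together with the Artebani--Sarti classification \cite{AS3}. Your enumeration $(r,a)\in\{(2,1),(4,0),(4,2),(6,1),(6,3)\}$ matches the paper's five pairs in \autoref{tab:Lambda}, and your explicit $\A_2\oplus\A_2(-1)$ overlattice construction for the unimodular case is a harmless (and sound) addition where the paper simply relies on \cite{AS3}.
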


\begin{proof}
	The result is a direct application of \autoref{Lemma su sottogruppi di ordine p}, the Theorem in \cite[\S1]{RS81}, \cite[Theorem 1.13.3]{Nik79}, \cite[Corollary 1.13.5]{Nik79} and \autoref{discr divisibile per p alla p-2}. The coinvariant sublattices that we find for \(p=3\) are \(\bLambda_G =\U^{\oplus2} \oplus \A_2(-1)\), \(\bLambda_G=\U \oplus \A_2(-1) \oplus \U(3)\), \(\bLambda_G=\U^{\oplus2}\), \(\bLambda_G=\U \oplus \U(3)\) and \(\bLambda_G=\A_2\) and
	due to the classification of Artebani--Sarti \cite{AS3}, they are coinvariant sublattices with respect to an isometry of order 3 on \(\bLambda\).  The classification is summarized in \autoref{tab:Lambda}. 
\end{proof}

\begin{proposition}
	If the order of \(G \subset \bO(\bLambda)\) is \(5\) and if \(\Sgn(\bLambda_G)=(2,\rk(\bLambda_G)-2)\) then there is one possible pair of invariant and coinvariant lattices \((\bLambda^G,\bLambda_G)\) with respect to the action of \(G\) on \(\bLambda\).
\end{proposition}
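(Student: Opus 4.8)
The plan is to read off the rank, signature and discriminant group of $\bLambda_G$ from the numerical constraints, to pin down its discriminant form, and then to invoke uniqueness in the genus to identify the pair $(\bLambda^G,\bLambda_G)$ explicitly; realizability is then checked through the K3 classification via \autoref{k3lattice}.

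First I would fix the rank. Since $\bLambda=\U^{\oplus 5}$ has signature $(5,5)$ and $\Sgn(\bLambda_G)=(2,\rk(\bLambda_G)-2)$, the negative part satisfies $\rk(\bLambda_G)-2\le 5$, so $\rk(\bLambda_G)\le 7$. By \autoref{Lemma su sottogruppi di ordine p} we have $(p-1)\mid\rk(\bLambda_G)$, that is $4\mid\rk(\bLambda_G)$; as $G$ is nontrivial this forces $\rk(\bLambda_G)=4$ and hence $\Sgn(\bLambda_G)=(2,2)$. Writing $m(p-1)=\rk(\bLambda_G)$ gives $m=1$, so the same lemma bounds the exponent $a$ of the discriminant group by $a\le 1$; since $\bLambda_G\neq 0$ we get $a=1$. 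By \autoref{lemma p elem} the lattice $\bLambda_G$ is therefore $5$-elementary with $(\bLambda_G)^{\sharp}\cong\mathbb{Z}/5\mathbb{Z}$ and $|{\det(\bLambda_G)}|=5$.

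Next I would identify $\bLambda_G$. It is even of rank $4=p-1$ with $\Sgn=(2,2)$, so its discriminant is $d=+5$ (the sign of $\det$ being $(-1)^{2}$), and \autoref{discr divisibile per p alla p-2} together with the signature constraint pins down the discriminant quadratic form on $\mathbb{Z}/5\mathbb{Z}$ uniquely (equivalently, the congruence $\Sgn\equiv 0\ (8)$ forces the Gauss sum of $q_{\bLambda_G}$ to single out the square-class form). Since $\bLambda_G$ is indefinite and $5$-elementary it is unique in its genus by \cite[Theorem 3.6.2]{Nik79}, and $\U\oplus\h_5$ realises exactly this signature and this discriminant form, whence $\bLambda_G\cong\U\oplus\h_5$. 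The invariant lattice then has $\rk(\bLambda^G)=10-4=6$ and $\Sgn(\bLambda^G)=(3,3)$, and by \autoref{lemma p elem} it satisfies $(\bLambda^G)^{\sharp}\cong(\bLambda_G(-1))^{\sharp}$; as $-1$ is a square modulo $5$ this is again the same form, so the same genus argument gives $\bLambda^G\cong\U^{\oplus2}\oplus\h_5$. This leaves the single candidate pair $(\bLambda^G,\bLambda_G)=(\U^{\oplus2}\oplus\h_5,\ \U\oplus\h_5)$, which is the one recorded in \autoref{tab:Lambda}.

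Finally, to confirm that this pair is genuinely attained I would apply \autoref{k3lattice}: the lattice $\U\oplus\h_5$ is $5$-elementary of signature $(2,2)$ and embeds primitively in $\bLambda$, and it occurs in the Artebani--Sarti--Taki classification \cite{AST11} as the coinvariant lattice of a nonsymplectic automorphism of order $5$ of a K3 surface, so it does arise as $\bLambda_G$ for a suitable $G\subset\bO(\bLambda)$ of order $5$. The only genuinely delicate point is the uniqueness of the discriminant form in the second step; everything else is forced by the rank and signature bookkeeping, and once that form is fixed the uniqueness in the genus and the identification of both $\bLambda_G$ and $\bLambda^G$ follow at once.
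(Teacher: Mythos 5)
Your overall strategy matches the paper's: rank and signature bookkeeping forces \(\rk(\bLambda_G)=4\), Nikulin's classification results identify the unique candidate pair \((\U^{\oplus2}\oplus\h_{5},\,\U\oplus\h_{5})\), and realizability is confirmed through \autoref{k3lattice} and the Artebani--Sarti--Taki classification, exactly as in the paper. The details you supply (the signature-mod-8 argument pinning down the discriminant form, uniqueness in the genus for indefinite \(p\)-elementary lattices, and the remark that \(-1\) is a square mod \(5\) when passing to \(\bLambda^G\)) are a faithful expansion of the paper's terse list of citations.

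However, one step is wrong as stated: the exclusion of \(a=0\) by the argument ``since \(\bLambda_G\neq 0\) we get \(a=1\).'' The invariant \(a\) of \autoref{Lemma su sottogruppi di ordine p} is the length of \(\bLambda/(\bLambda^G\oplus\bLambda_G)\), equivalently (since \(\bLambda\) is unimodular) the length of \((\bLambda_G)^{\sharp}\); it vanishes precisely when \(\bLambda_G\) is unimodular, which is perfectly compatible with \(\bLambda_G\neq 0\). The paper's own \autoref{tab:Lambda} exhibits this phenomenon: for \(p=3\) (and for \(p=2\)) the nonzero coinvariant lattice \(\bLambda_G\cong\U^{\oplus2}\) occurs with \(a=0\). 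So for \(p=5\) you must genuinely rule out the candidate \(\bLambda_G\cong\U^{\oplus2}\), i.e.\ the pair \((\U^{\oplus3},\U^{\oplus2})\); otherwise you would be left with two candidate pairs instead of one. The correct tool is precisely \autoref{discr divisibile per p alla p-2}, which you invoke only later and for a different purpose: since \(\bLambda_G\) has rank \(p-1=4\) and carries a fixed-point-free action of order \(5\), its discriminant \(d\) must satisfy that \(d/5^{3}\) is a square in \(\mathbb{Q}\); this fails for \(d=1\) and holds for \(d=5\), forcing \(a=1\). (Note in addition that this proposition constrains only the discriminant, not the discriminant quadratic form; the form itself is determined, as you say parenthetically, by the signature modulo 8.) With this repair your argument is complete and coincides with the paper's proof.
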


\begin{proof}
Recall that \(\rk(\bLambda_G)\) has to be a multiple of \(4\) and, due to the assumptions on the signature, \(\rk(\bLambda_G) \leq 7\). The result is a direct application of \autoref{Lemma su sottogruppi di ordine p}, the Theorem in \cite[\S1]{RS81}, \cite[Corollary 1.13.5]{Nik79} and \autoref{discr divisibile per p alla p-2}.  
In this case \(\bLambda_G=\U \oplus \h_{5}\) and \(\bLambda^G=\U^{\oplus 2} \oplus \h_{5}\). By \autoref{k3lattice} we check that this pair corresponds to invariant and coinvariant lattices \((\bLambda^G,\bLambda_G)\) with respect to the action of a group \(G \subset \bO(\bLambda)\) of order \(5\). The classification is summarized in \autoref{tab:Lambda}.
\end{proof}

\begin{proposition}
	If the order of \(G \subset \bO(\bLambda)\) is \(7\) and if \(\Sgn(\bLambda_G)=(2,\rk(\bLambda_G)-2)\) then there is one possible pair of invariant and coinvariant lattices \((\bLambda^G,\bLambda_G)\) with respect to the action of \(G\) on \(\bLambda\).
\end{proposition}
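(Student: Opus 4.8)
The plan is to determine the pair $(\bLambda^G,\bLambda_G)$ by successively constraining its rank, its discriminant group, and finally its isometry class. First I would fix the rank of $\bLambda_G$. By \autoref{Lemma su sottogruppi di ordine p} we have $(p-1)\mid \rk(\bLambda_G)$, so $6 \mid \rk(\bLambda_G)$; on the other hand the hypothesis $\Sgn(\bLambda_G)=(2,\rk(\bLambda_G)-2)$ together with the primitive embedding $\bLambda_G \hookrightarrow \bLambda$ and $\Sgn(\bLambda)=(5,5)$ forces $\rk(\bLambda_G)-2 \leq 5$, i.e.\ $\rk(\bLambda_G)\leq 7$. Since also $\rk(\bLambda_G)\geq 2$, the only possibility is $\rk(\bLambda_G)=6$, whence $\rk(\bLambda^G)=10-6=4$ and $\Sgn(\bLambda^G)=(5,5)-(2,4)=(3,1)$.

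Next I would determine the discriminant groups. By \autoref{lemma p elem} both $\bLambda_G$ and $\bLambda^G$ are $7$-elementary, with $(\bLambda_G)^{\sharp}\cong (\bLambda^G(-1))^{\sharp}$; write $(\bLambda_G)^{\sharp}\cong (\mathbb{Z}/7\mathbb{Z})^{a}$. Since $\rk(\bLambda_G)=6=1\cdot(p-1)$, \autoref{Lemma su sottogruppi di ordine p} gives $a\leq 1$. To exclude $a=0$ I would apply \autoref{discr divisibile per p alla p-2}, which is available precisely because $\rk(\bLambda_G)=6=p-1$: it forces $\det(\bLambda_G)/7^{5}$ to be a square in $\mathbb{Q}$. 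If $a=0$ the lattice $\bLambda_G$ would be even unimodular of signature $(2,4)$ and $\det(\bLambda_G)=\pm1$, but $\pm1/7^{5}$ is not a square; hence $a=1$. Since $\Sgn(\bLambda_G)=(2,4)$ fixes the sign of the determinant we get $\det(\bLambda_G)=7$, and then $7/7^{5}=1/7^{4}$ is indeed a square, consistently.

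It remains to identify $\bLambda_G$ and $\bLambda^G$ up to isometry and to check that the pair is realized. The lattice $\bLambda_G$ is even, $7$-elementary, indefinite, of signature $(2,4)$ with $\det=7$; by the classification of Rudakov--Shafarevich \cite[\S1]{RS81} together with the uniqueness of indefinite $p$-elementary lattices \cite[Theorem 3.6.2]{Nik79} (and \cite[Corollary 1.13.5]{Nik79}), such a lattice is unique in its genus, the discriminant form being pinned down by the signature modulo $8$. One checks that $\bLambda_G \cong \U^{\oplus 2}\oplus \K_{7}$ (note $\det(\K_{7})=7$ and $\Sgn(\K_{7})=(0,2)$), and correspondingly, using $(\bLambda^G)^{\sharp}\cong (\bLambda_G(-1))^{\sharp}$ and the signature $(3,1)$, that $\bLambda^G\cong \U\oplus\K_{7}(-1)$. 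Finally, to guarantee that this pair actually occurs as invariant and coinvariant sublattices of an order $7$ isometry of $\bLambda$, I would invoke \autoref{k3lattice}: it suffices to exhibit a K3 surface carrying a nonsymplectic automorphism of order $7$ whose coinvariant lattice is $\U^{\oplus 2}\oplus \K_{7}$, which is provided by the classification of Artebani--Sarti--Taki \cite{AST11}.

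The main obstacle I anticipate is this last step. The rank and discriminant-group bookkeeping is forced by the lemmas above and leaves essentially no freedom; the delicate point is verifying that the unique candidate $\U^{\oplus 2}\oplus \K_{7}$ is genuinely realized as a coinvariant lattice, which requires matching it against the tables for order $7$ nonsymplectic automorphisms of K3 surfaces through \autoref{k3lattice} rather than merely establishing abstract existence of the lattice.
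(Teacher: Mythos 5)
Your proof is correct and takes essentially the same route as the paper: rank forced to $6$ by \autoref{Lemma su sottogruppi di ordine p} together with the signature bound from the embedding in \(\bLambda\), length \(a=1\) via \autoref{discr divisibile per p alla p-2}, identification of the unique \(7\)-elementary genus by the Theorem in \cite[\S1]{RS81} and \cite[Corollary 1.13.5]{Nik79}, and realization through \autoref{k3lattice} matched against the Artebani--Sarti--Taki tables \cite{AST11}. Note that your \(\bLambda^G \cong \U \oplus \K_7(-1)\) agrees with \autoref{tab:Lambda} and is the correct form (it must have signature \((3,1)\)); the paper's proof text writes \(\U \oplus \K_7\), which is a sign typo.
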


\begin{proof}

Recall that \(\rk(\bLambda_G)\) has to be a multiple of \(6\) and, due to the assumptions on the signature, \(\rk(\bLambda_G) \leq 7\). The result is a direct application of \autoref{Lemma su sottogruppi di ordine p}, the Theorem in \cite[\S1]{RS81}, \cite[Corollary 1.13.5]{Nik79} and \autoref{discr divisibile per p alla p-2}. We obtain a unique coinvariant lattice \(\bLambda_G =\U^{\oplus2} \oplus \K_7\) and the orthogonal complement is \(\bLambda^G=\U \oplus \K_7\).
 By \autoref{k3lattice} we check that this pair corresponds to the action of a group \(G \subset \bO(\bLambda)\) of order \(7\). The classification is summarized in \autoref{tab:Lambda}.
\end{proof}

In \autoref{tab:Lambda} in the column \(\delta\) we indicate whether the \(2\)-elementary quadratic form of the discriminant group of the lattice is integer valued, \(\delta=0\), or not, \(\delta=1\) (cf \autoref{delta}). Moreover \(a\) is the length of the discriminant group of \(\bLambda_G\) and of \(\bLambda^G\) since \(\bLambda\) is unimodular.

\begin{center}
	\begin{longtable}{lllllll}
		
		\caption{Pairs \((\bLambda^G,\bLambda_G)\) for \(G \subset \bO(\bLambda)\) of prime order \(p=2\) and \(\Sgn(\bLambda_G)=(2, \rk(\bLambda_G)-2)\), or \(p=2\) and \(\Sgn(\bLambda_G)=(3, \rk(\bLambda_G)-3)\), or \(p \in \{3,5,7\}\) and \(\Sgn(\bLambda_G)=(2, \rk(\bLambda_G)-2)\).}
	\label{tab:Lambda} \\
		
		\toprule
	 No. & \(|G|\) &  \(\bLambda_G\) & \(\bLambda^G\) & \(\Sgn(\bLambda_G)\) & \(a\) & \(\delta\)  \\

		\midrule
		\endfirsthead
		
		\multicolumn{7}{c}%
		{\tablename\ \thetable{}, follows from previous page} \\
		\midrule
	No. & \(|G|\) &  \(\bLambda_G\) & \(\bLambda^G\) & \(\Sgn(\bLambda_G)\) & \(a\) & \(\delta\)  \\
		\midrule
		\endhead
	
		\multicolumn{7}{c}{Continues on next page} \\
		\endfoot
		
		\bottomrule
		\endlastfoot

		\(1\) & \(2\) & \(\U^{\oplus 2} \oplus [-2 ] ^{\oplus 3 }\)& \([ 2 ] ^{\oplus 3 }\)& \((2,5)\) & \(3\) & \(1\) \\
		\(2\) & \(2\) & \(\U \oplus [ -2 ] ^{\oplus 3 } \oplus [2 ]\) & \([ 2 ] ^{\oplus 3} \oplus [ -2 ]\) &  \((2,4)\) & \(4\) & \(1\) \\
		\(3\) & \(2\)  & \(\U^{\oplus 2} \oplus [ -2 ] ^{\oplus 2 }\) & \(\U \oplus [ 2 ] ^{\oplus 2}\)&  \((2,4)\) & \(2\) & \(1\) \\
		\(4\) & \(2\)  & \([2]^{\oplus 2} \oplus [ -2 ] ^{\oplus 3}\) & \([ -2]^{\oplus 2} \oplus [2 ] ^{\oplus 3}\)  & \((2,3)\) & \(5\) & \(1\) \\
		\(5\) & \(2\)  & \(\U \oplus  [ -2 ] ^{\oplus 2 } \oplus [ 2 ]\) & \(\U \oplus [2 ] ^{\oplus 2 } \oplus [-2 ]\) &  \((2,3)\) & \(3\) & \(1\) \\
		\(6\) & \(2\) & \(\U^{\oplus 2} \oplus [ -2 ]\) & \(\U^{\oplus 2} \oplus [ 2 ]\) &  \((2,3)\) & \(1\) & \(1\) \\
		\(7\) & \(2\)  & \([ 2 ] ^{\oplus 2} \oplus [ -2 ] ^{\oplus 2}\) & \(\U \oplus [ 2 ] ^{\oplus 2} \oplus [ -2 ]^{\oplus 2}\) &  \((2,2)\) & \(4\) & \(1\) \\
		\(8\) & \(2\) & \(\U(2)^{\oplus 2}\)  & \(\U \oplus \U(2)^{\oplus 2}\)&  \((2,2)\) & \(4\) & \(0\) \\
		\(9\) & \(2\) & \(\U \oplus [ 2 ] \oplus[ -2 ]\) & \(\U^{\oplus 2} \oplus [ 2 ] \oplus [ -2 ]\) &  \((2,2)\) & \(2\) & \(1\) \\
		\(10\) & \(2\) & \(\U \oplus \U(2)\) & \(\U^{\oplus 2} \oplus \U(2)\) &  \((2,2)\) & \(2\) & \(0\) \\
		\(11\) & \(2\) & \(\U ^{\oplus 2}\)& \(\U^{\oplus 3}\) &  \((2,2)\) & \(0\) & \(0\) \\
		\(12\) & \(2\) & \([2 ]^{\oplus 2} \oplus [ -2 ]\) & \(\U^{\oplus 2} \oplus [-2]^{\oplus 2} \oplus [2]\) &  \((2,1)\) & \(3\) & \(1\) \\
	    \(13\) & \(2\) & \(\U \oplus [2]\) & \(\U^{\oplus 3} \oplus [ -2 ]\) &  \((2,1)\) & \(1\) & \(1\) \\
		\(14\) & \(2\) & \([ 2 ]^{ \oplus 2}\)& \(\U^{\oplus 3} \oplus [-2 ] ^{\oplus 2}\) &  \((2,0)\) & \(2\) & \(1\) \\
		\midrule
		\(1\) & \(2\) & \(\U^{\oplus 3} \oplus [ -2] ^{\oplus 2}\) & \([2]^{\oplus 2}\) & \((3,5)\) & \(2\) & \(1\) \\
		\(2\) & \(2\) & \(\U^{\oplus 2} \oplus [2] \oplus [-2]^{\oplus2}\) & \([2]^{\oplus2} \oplus [-2]\) & \((3,4)\) & \(3\) & \(1\) \\
		\(3\) & \(2\) & \(\U  \oplus [ 2 ] ^{\oplus 2 } \oplus [ -2] ^{\oplus 2}\) & \([ 2 ] ^{\oplus 2 } \oplus [ -2 ] ^{\oplus 2}\) &  \((3,3)\) & \(4\) & \(1\) \\
		\(4\) & \(2\) & \(\U \oplus \U(2)^{\oplus 2}\) & \(\U(2)^{\oplus2}\) &  \((3,3)\) & \(4\) & \(0\) \\
		\(5\) & \(2\) & \(\U^{\oplus 2} \oplus [ 2 ] \oplus [ -2 ]\) & \(\U  \oplus [ 2 ] \oplus [ -2 ]\) &  \((3,3)\) & \(2\) & \(1\) \\
		\(6\) & \(2\) & \(\U^{\oplus 2} \oplus \U(2)\) & \(\U \oplus \U(2)\) &  \((3,3)\) & \(2\) & \(0\) \\
    	\(7\) & \(2\) & \(\U ^{ \oplus 3}\)& \(\U^{\oplus 2}\) &  \((3,3)\) & \(0\) & \(0\) \\
		\(8\) & \(2\) & \([ -2 ] ^{\oplus 2} \oplus[ 2 ] ^{\oplus 3}\)& \([ 2 ] ^{\oplus 2} \oplus [ -2]^{\oplus 3}\) &  \((3,2)\) & \(5\) & \(1\) \\
		\(9\) & \(2\)  & \(\U \oplus [ -2 ]  \oplus [ 2 ] ^{\oplus 2}\) & \(\U \oplus [2 ] \oplus [ -2 ] ^{\oplus 2}\) &  \((3,2)\) & \(3\) & \(1\) \\
		\(10\) & \(2\)  & \(\U^{\oplus 2} \oplus [2] \)  & \(\U^{\oplus 2} \oplus [-2 ]\) &  \((3,2)\) & \(1\) & \(1\) \\
		\(11\) & \(2\) & \(\U(2)\oplus [2 ] ^{\oplus 2}\) & \(\U \oplus \U(2)\oplus [-2] ^{\oplus 2}\) &  \((3,1)\) & \(4\) & \(1\) \\
		\(12\) & \(2\)  & \(\U \oplus [ 2 ]^{\oplus 2}\)& \(\U^{\oplus 2} \oplus [ -2] ^{\oplus 2}\) &  \((3,1)\) & \(2\) & \(1\) \\
		\(13\) & \(2\) & \([ 2 ]^{\oplus 3}\)& \(\U^{\oplus 2} \oplus  [ -2 ] ^{\oplus 3}\) &  \((3,0)\) & \(3\) & \(1\) \\
		\midrule
		\(1\) & \(3\) & \(\U^{\oplus 2} \oplus \A_{2}(-1)\) & \(\U \oplus \A_{2}\) & \((2,4)\) & \(1\) &-- \\
		\(2\) & \(3\) & \(\U \oplus \U(3) \oplus \A_{2}(-1)\) & \(\U(3) \oplus \A_{2}\) & \((2,4)\) & \(3\) &-- \\
		\(3\) & \(3\) & \(\U^{\oplus 2} \) & \(\U^{\oplus3}\) & \((2,2)\) & \(0\) &-- \\
		\(4\) & \(3\) & \(\U \oplus \U(3) \) & \(\U^{\oplus 2} \oplus \U(3)\) & \((2,2)\) & \(2\) &-- \\
		\(5\) & \(3\)  & \(\A_{2}\) & \(\U^{\oplus 3} \oplus \A_{2}(-1)\) & \((2,0)\) & \(1\) & --  \\
		\midrule
		\(1\) & \(5\) & \(\U \oplus \h_{5}\) & \(\U^{\oplus 2} \oplus \h_{5}\) & \((2,2)\) & \(1\) & --\\
		\midrule
	    \(1\) & \(7\) & \(\U^{\oplus 2} \oplus \K_{7}\) & \(\U \oplus \K_{7}(-1)\) & \((2,4)\) & \(1\) &-- \\
	\end{longtable}
\end{center}

\section{Invariant and coinvariant lattices of \texorpdfstring{\(\bL\)}{L}}\label{Nonsymplectic automorphisms I: a classification in dimension six}

 To classify effective nonsymplectic groups of isometries \(G \subset \bO(\bL)\) means to classify invariant and coinvariant sublattices \((\bL^G,\bL_G)\) of \(\bL\). As we have already explained in \autoref{Introduction} there are two levels of classification and to classify group actions is finer than classifying pairs \((\bL^G,\bL_G)\).
The classification in \((1)\) is equivalent to counting the different primitive embeddings of \(\bL_G\) in \(\bL\). 
In \autoref{ord2trivial} and in \autoref{ord2nt} we classify pairs \((\bL^G, \bL_G)\) for effective nonsymplectic groups \(G \subset \bO(\bL)\) of order \(2\) and in \autoref{ord357} we classify pairs \((\bL^G, \bL_G)\) for effective nonsymplectic groups \(G \subset \bO(\bL)\) of order \(3, 5, 7\). In \autoref{proof of thoerem 1.2} we prove \autoref{thm resume}.

\subsection{Order 2, trivial action on the discriminant group}\label{ord2trivial}

\begin{lemma}\label{lemmabrand}
	If \(L\) is a lattice and \(G \subset \bO(L)\) is a group of order \(2\) generated by \(\varphi\), then  \(L_{\varphi}=L^{-\varphi}\).
\end{lemma}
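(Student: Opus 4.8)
The plan is to reduce the statement entirely to the kernel descriptions of invariant and coinvariant sublattices recalled in \autoref{Isometries}. There, for a cyclic group of order \(n\) generated by \(g\), it is recorded that \(L^{\langle g\rangle}=\Ker(g-\id)\) and \(L_{\langle g\rangle}=\Ker(\id+g+\cdots+g^{n-1})\). Specializing to our situation, where \(\varphi\) has order \(2\), this gives immediately
\[
L_{\varphi}=\Ker(\id+\varphi).
\]
So the whole lemma becomes the verification of an equality of kernels of two endomorphisms of \(L\), and no genus-theoretic or embedding input is needed.

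First I would observe that \(-\varphi\) is again an isometry of \(L\): for all \(x,y\in L\) we have \(((-\varphi)(x),(-\varphi)(y))=(\varphi(x),\varphi(y))=(x,y)\). Moreover \((-\varphi)^2=\varphi^2=\id\), so \(-\varphi\) generates a group of order dividing \(2\), and the kernel formula for invariant lattices applies to it as well. Applying that formula to \(-\varphi\) and simplifying yields
\[
L^{-\varphi}=\Ker\bigl((-\varphi)-\id\bigr)=\Ker\bigl(-(\varphi+\id)\bigr)=\Ker(\varphi+\id)=\Ker(\id+\varphi).
\]
Comparing with the expression for \(L_\varphi\) above gives \(L_\varphi=L^{-\varphi}\), which is the claim.

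There is essentially no obstacle here; the only point requiring a word of care is the degenerate case \(\varphi=-\id\), where \(-\varphi=\id\) so that \(L^{-\varphi}=L\). This case is consistent, since then \(L_\varphi=\Ker(\id+\varphi)=\Ker(0)=L\) as well. The genuine content of the argument is the identification \(L_\varphi=(L^\varphi)^\perp=\Ker(\id+\varphi)\), i.e.\ that the orthogonal-complement definition of the coinvariant lattice agrees with the kernel of \(\id+\varphi\); but this is exactly the formula cited in \autoref{Isometries}, so I would simply invoke it rather than reprove it.
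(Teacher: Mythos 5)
Your proof is correct and is essentially the same as the paper's: both reduce the claim to the kernel descriptions \(L_\varphi=\Ker(\id+\varphi)\) and \(L^{-\varphi}=\Ker((-\varphi)-\id)\) and observe that these kernels coincide because negating an endomorphism does not change its kernel. The extra remarks you add (that \(-\varphi\) is again an isometry of order dividing \(2\), and the degenerate case \(\varphi=-\id\)) are harmless elaborations of the same one-line argument.
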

\proof
We have \(L_{\varphi}=\Ker (\varphi +\id)=\Ker(-\varphi -\id)=\Ker((-\varphi)-\id)=L^{-\varphi}.\)
\endproof

\begin{lemma}\label{lemma simon}
	If \(G \subset \bO(\bL)\) is a group such that \(|G|=2\) and \(|G^{\sharp}|=1\) then \(\bL_G\) and \(\bL^{G}\) are \(2\)-elementary lattices.
\end{lemma}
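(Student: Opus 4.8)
The plan is to reduce both statements to a single assertion about the coinvariant lattice of an involution acting trivially on the discriminant group, and then to feed that assertion into the unimodular overlattice machinery already developed. First I record two elementary reductions. Since \(\bL = \U^{\oplus 3}\oplus[-2]^{\oplus 2}\), its discriminant group is \(\bL^{\sharp}\cong(\mathbb{Z}/2\mathbb{Z})^{\oplus 2}\), on which \(-\id\) acts as the identity because every element has order dividing \(2\). Writing \(\varphi\) for a generator of \(G\), it follows that the isometry \(-\varphi\in\bO(\bL)\) also satisfies \((-\varphi)^{\sharp}=(-\id)^{\sharp}\circ\varphi^{\sharp}=\id\). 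Moreover, by \autoref{lemmabrand} applied to the isometry \(-\varphi\) one has \(\bL^{G}=\bL^{\varphi}=\bL_{-\varphi}\), whereas \(\bL_{G}=\bL_{\varphi}\) by definition. Hence it suffices to establish the following single claim: for any involution \(\psi\in\bO(\bL)\) with \(\psi^{\sharp}=\id\), the coinvariant lattice \(\bL_{\psi}\) is \(2\)-elementary. Applying the claim once to \(\psi=\varphi\) and once to \(\psi=-\varphi\) then yields that \(\bL_{G}\) and \(\bL^{G}\) are both \(2\)-elementary.

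To prove the claim I pass to the unimodular lattice \(\bLambda=\U^{\oplus 5}\). Because \(\psi^{\sharp}=\id\), \autoref{come si estende l'azione se azione banale su Ax} provides a primitive embedding \(\bL\hookrightarrow\bLambda\) and an extension \(\widetilde{\psi}\in\bO(\bLambda)\) acting trivially on the residual lattice \(\bR=\bL^{\perp}\). Set \(G'=\langle\widetilde{\psi}\rangle\), a subgroup of \(\bO(\bLambda)\) of prime order \(2\). Since \(|\psi^{\sharp}|=1\), \autoref{signature of SgX and TgX trivial} furnishes an isometry \(\bL_{\psi}\cong\bLambda_{G'}\). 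On the other hand \(\bLambda\) is unimodular, so \autoref{lemma p elem} (with \(p=2\)) guarantees that \(\bLambda_{G'}\) is \(2\)-elementary. Therefore \(\bL_{\psi}\) is \(2\)-elementary, which is exactly the claim.

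Finally I dispose of the degenerate case of the reduction. The identity \(\bL^{G}=\bL_{-\varphi}\) invokes \autoref{lemmabrand} for the group \(\langle-\varphi\rangle\), which has order \(2\) precisely when \(\varphi\neq-\id\); if instead \(\varphi=-\id\), then directly \(\bL_{G}=\bL\) and \(\bL^{G}=0\), both trivially \(2\)-elementary, so the statement holds there as well. The only genuinely load-bearing inputs are \autoref{signature of SgX and TgX trivial} and \autoref{lemma p elem}; the step that most deserves care is the observation that \(-\id\) acts trivially on \(\bL^{\sharp}\), since this is what both makes the symmetric replacement \(\bL^{G}=\bL_{-\varphi}\) legitimate and simultaneously ensures that \(-\varphi\) still has trivial discriminant action, so that the coinvariant claim can be applied verbatim to it. Everything else is formal.
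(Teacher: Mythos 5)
Your proof is correct and takes essentially the same approach as the paper: both arguments rest on the identity \(\bL^{\varphi}=\bL_{-\varphi}\) from \autoref{lemmabrand} together with extending an involution having trivial discriminant action across \(\bR\) by the identity so that \autoref{lemma p elem} applies in the unimodular lattice \(\bLambda\); your trivial extension of \(-\varphi\) is precisely the paper's isometry \(-\varphi'=-\varphi\oplus\id_{\bL^{\perp}}\). The only differences are presentational: you package the argument as a single symmetric claim applied to both \(\varphi\) and \(-\varphi\) (justified by observing that \(-\id\) acts trivially on the \(2\)-torsion group \(\bL^{\sharp}\)), and you explicitly treat the degenerate case \(\varphi=-\id\), which the paper's assertion that \(-\varphi'\) has order \(2\) silently passes over.
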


\proof 
The lattice \(\bL_G\) is \(2\)-elementary by \autoref{come si estende l'azione se azione banale su Ax} and by \autoref{lemma p elem} using the primitive embedding \(\bL \hookrightarrow \bLambda\). If \(\varphi\) is a generator of \(G\), then
\(\varphi'= \varphi \oplus -\id_{\bL^{\perp}}\) is an extension of \(\varphi\) to \(\bLambda\) (cf \cite[Corollary 1.5.2]{Nik79}). 
The isometry \(-\varphi'=-\varphi \oplus \id_{\bL^{\perp}}\) is an isometry of order \(2\) of \(\bLambda\) hence by \autoref{lemma p elem} \(\bLambda_{-\varphi'}\) is a ~\(2\)-elementary lattice. Moreover by construction \(\bLambda_{-\varphi'} \cong \bL_{-\varphi}\) and by \autoref{lemmabrand}  \(\bL_{-\varphi} \cong \bL^{\varphi}\) so \(\bL^G \cong \bL^{\varphi}\) is \(2\)-elementary.
\endproof

\begin{proposition}\label{propsottogruppo}
	Let \(G \subset \bO(\bL)\) be a subgroup such that \(|G|=2\) and \(|G^{\sharp}|=1\). Let \(H \subset \bL^{\sharp}\) be the embedding subgroup of a primitive embedding \(\bL_G \hookrightarrow \bL\). 
	\begin{itemize}
		\item[(a)] If \(H = \{\id\}\) then \(l((\bL^G)^{\sharp})=l((\bL_G)^{\sharp})+2\).
		\item[(b)] If \(H=\mathbb{Z}/2\mathbb{Z}\) then \(l((\bL^G)^{\sharp})=l((\bL_G)^{\sharp})\).
		\item[(c)] If \(H= (\mathbb{Z}/2\mathbb{Z})^{\oplus2}\) then \(l((\bL^G)^{\sharp})= l((\bL_G)^{\sharp})-2\).
	\end{itemize}
\end{proposition}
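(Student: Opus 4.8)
The plan is to deduce all three identities from the single determinant formula for primitive embeddings recorded in \autoref{Embeddings of lattices} (i.e.\ \cite[Proposition 1.15.1]{Nik79}), after reducing everything to powers of $2$. The key preliminary input is that the three relevant lattices are $2$-elementary: by \autoref{lemma simon} the hypotheses $|G|=2$ and $|G^{\sharp}|=1$ guarantee that $\bL_G$ and $\bL^G$ are $2$-elementary, while $\bL=\U^{\oplus3}\oplus[-2]^{\oplus2}$ is $2$-elementary with $\bL^{\sharp}\cong(\mathbb{Z}/2\mathbb{Z})^{\oplus2}$, so $|\det(\bL)|=4$. For any $2$-elementary lattice $M$ one has $M^{\sharp}\cong(\mathbb{Z}/2\mathbb{Z})^{l(M^{\sharp})}$ and hence $|\det(M)|=2^{\,l(M^{\sharp})}$; this is what turns the determinant formula into a linear relation among the lengths.

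First I would observe that, since $\bL^G=(\bL_G)^{\perp}\subset\bL$, the embedding $\bL_G\hookrightarrow\bL$ is exactly the primitive embedding whose orthogonal complement is $\bL^G$. Setting $S=\bL_G$, $L=\bL$ and $T=S^{\perp}=\bL^G$ in Nikulin's notation, with $H\subset\bL^{\sharp}$ the embedding subgroup, the formula
\[
|\det(\bL^G)|=\frac{|\det(\bL)|\cdot|\det(\bL_G)|}{|H|^2}
\]
applies directly. Substituting the three determinants as powers of $2$ and taking $\log_2$ yields the single identity
\[
l((\bL^G)^{\sharp})=2+l((\bL_G)^{\sharp})-2\log_2|H|.
\]

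Finally I would distinguish the three possible orders of a subgroup $H\subset\bL^{\sharp}\cong(\mathbb{Z}/2\mathbb{Z})^{\oplus2}$, namely $|H|\in\{1,2,4\}$, so that $\log_2|H|\in\{0,1,2\}$. Plugging these values into the displayed identity gives $l((\bL^G)^{\sharp})=l((\bL_G)^{\sharp})+2$ in case (a), $l((\bL^G)^{\sharp})=l((\bL_G)^{\sharp})$ in case (b), and $l((\bL^G)^{\sharp})=l((\bL_G)^{\sharp})-2$ in case (c), which is the assertion. I do not expect a genuine obstacle here: the content is a one-line computation with the determinant formula, and the only points requiring care are the preliminary reduction to $2$-elementary lattices (supplied by \autoref{lemma simon}) and the identification of $\bL^G$ with the orthogonal complement $T$ that appears in Nikulin's statement, after which the case analysis on $|H|$ is immediate.
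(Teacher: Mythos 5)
Your proof is correct and follows essentially the same route as the paper's own argument: both reduce to the determinant formula of \cite[Proposition 1.15.1]{Nik79} after invoking \autoref{lemma simon} to ensure \(\bL\), \(\bL_G\), \(\bL^G\) are \(2\)-elementary, so that determinants become powers of \(2\) and the formula turns into the stated linear relation among lengths, settled by the case analysis \(|H|\in\{1,2,4\}\). The only difference is presentational: you make explicit the identification \(\bL^G=(\bL_G)^{\perp}\) and the identity \(|{\det(M)}|=2^{\,l(M^{\sharp})}\), which the paper leaves implicit.
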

\proof
Recall that an embedding \(\bL_{G} \hookrightarrow \bL\) is given by an isometry between a subgroup of \((\bL_G)^{\sharp}\) and a subgroup of \(\bL^{\sharp}\). 
We know that \(\bL^{\sharp} \cong (\mathbb{Z}/2\mathbb{Z})^{\oplus2}\) hence there are three choices for the the embedding subgroup \(H \subset \bL^{\sharp}\). 
By \autoref{lemma simon} the lattices \(\bL\), \(\bL_G\) and \(\bL^G\) are \(2\)-elementary, so the formula 
\[
    |{\det(\bL^G)}|=|{\det(L)}| \cdot |{\det(\bL_G)}| /|H|^2
\]
gives 
\[
    l((\bL^G)^{\sharp})=l((\bL_G)^{\sharp})+2-2n,
\]
where \(H \cong (\mathbb{Z}/2\mathbb{Z})^{\oplus n}\). Replacing \(n\) with \(0,1,2\) we obtain the statements \((a)\), \((b)\), \((c)\) respectively.
\endproof
\begin{remark}
By Proposition \cite[Theorem 3.6.2]{Nik79} if \(\delta=0\) then \(\Sgn(\bL^G)=(1,1)\) or \(\Sgn(\bL^G)=(1,5)\).
\end{remark}

Here we show how to use \autoref{propsottogruppo} to compute the only possible primitive embeddings of the coinvariant sublattices \(\bL_G=\bLambda_G\) in \(\bL\) in order to obtain the invariant sublattices \(\bL^G\). We do it in one case, the same strategy is applied for all the others. The first case of \autoref{tab:Lambda} is \(\bLambda_G=\bL_G=\U^{\oplus2} \oplus [-2]^{\oplus
 3}\). Using \autoref{propsottogruppo} we know that if the embedding subgroup \(H\) is equal to the identity then \(l((\bL^G)^{\sharp})\) is equal to \(l((\bL_G)^{\sharp})+2\), hence the lattice \(\bL^G\) has rank \(1\) and \(l((\bL^G)^{\sharp})=3+2=5\), hence this case cannot happen by \cite[Theorem 1.10.1.(2)]{Nik79}.
If the embedding subgroup \(H\) is equal to \(\mathbb{Z}/2\mathbb{Z}\) then \(l((\bL^G)^{\sharp})\) is equal to\(l((\bL_G)^{\sharp})\), hence the lattice \(\bL^G\) has rank \(1\) and \(l((\bL^G)^{\sharp})\) is equal to \(3\), hence neither this case can happen. As last choice we have the embedding subgroup \(H\) equal to\( (\mathbb{Z}/2\mathbb{Z})^{\oplus2}\) and the following relation holds: \(l((\bL^G)^{\sharp})= l((\bL_G)^{\sharp})-2\). In this case the lattice \(\bL^G\) has rank \(1\) and \(l((\bL^G)^{\sharp})\) is equal to \(1\). This case can happen and computing the embedding we get \(\bL^G=[2]\). Starting from the classification in \autoref{tab:Lambda} we classify invariant and coinvariant sublattices of \(G\) in \(\bL\) in \autoref{tab:L}.


\subsection{Order 2, nontrivial action on the discriminant group}\label{ord2nt}
Consider an embedding \(\bL \hookrightarrow \bLambda\) and a group of isometries \(G \subset \bO(\bLambda)\) such that \(|G|=2\) and \(|G^{\sharp}|=2\).
By \autoref{lemma p elem} we computed the thirteen possible isometry classes of \(\bLambda_G\) in \autoref{tab:Lambda}.

Let \(r_{1}\) and \(r_{2}\) be two orthogonal generators of \(\bR\). If \(|G^{\sharp}|=2\) then by \autoref{signature of SgX and TgX nontrivial} ~\(\bR_G \cong [4]\) is generated by the  \((r_1-r_2)\) and \(\bR^G\cong[4]\) is generated by the  \((r_1+r_2)\). By \cite[Proposition 1.15.1]{Nik79} we compute the possible primitive embeddings ~\(\bR_G\cong[4] \hookrightarrow \bLambda_G\) in order to find \(S\cong(\bR_G)^{\perp} \subset \bLambda_G\). The results of this computation are the possible isometry classes of \(S\) summarized in \autoref{tab:L_G}.

\begin{center}
	\begin{longtable}{lll}
		
		\caption{Orthogonal complements of \([4] \hookrightarrow \bLambda_G\).} 
		\label{tab:L_G}\\
		\toprule
	No. & \(\bLambda_G\) & \(S=[4]^{\perp_{\Lambda_G}}\) \\

		\midrule
		\endfirsthead
		
		\multicolumn{3}{c}%
		{\tablename\ \thetable{}, follows from previous page} \\
		\midrule
	No. & \(\bLambda_G\) & \(S=[4]^{\perp_{\Lambda_G}}\) \\
		\midrule
		\endhead

		\multicolumn{3}{c}{Continues on next page} \\
		\endfoot
		
		\bottomrule
		\endlastfoot

		\(1\) & \(\U^{\oplus3} \oplus [-2]^{\oplus2}\) & \(\U^{\oplus2} \oplus [-2]^{\oplus2} \oplus [-4]\) \\
		\midrule
		\(2\) & \(\U^{\oplus2} \oplus [2] \oplus [-2]^{\oplus2}\) & \(\U \oplus [2] \oplus \A_3(-1)\) \\
		\(3\) & \(\U^{\oplus2} \oplus [2] \oplus [-2]^{\oplus2}\) & \(\U \oplus [2] \oplus [-2]^{\oplus2} \oplus [-4]\) \\
		\midrule
		\(4\) & \(\U \oplus [2]^{\oplus2} \oplus [-2]^{\oplus2}\) & \([2]^{\oplus2} \oplus [-2]^{\oplus2} \oplus [-4]\) \\
		\(5\) & \(\U \oplus [2]^{\oplus2} \oplus [-2]^{\oplus2}\) & \(\U \oplus [-2]^{\oplus2} \oplus [4]\) \\
		\midrule
		\(6\) & \(\U \oplus \U(2)^{\oplus2}\) & \(\U(2)^{\oplus2} \oplus [-4]\) \\
		\(7\) & \(\U \oplus \U(2)^{\oplus2}\) & \(\U \oplus \U(2)\oplus [-4]\) \\
		\midrule
		\(8\) & \(\U^{\oplus2} \oplus[2] \oplus [-2]\) & \(\U  \oplus [2] \oplus [-2] \oplus [-4]\) \\
		\midrule
		\(9\) & \(\U^{\oplus 2} \oplus \U(2)\) & \(\U\oplus \U(2) \oplus [-4]\)  \\
		\(10\) & \(\U^{\oplus 2} \oplus \U(2)\) & \(\U^{\oplus2} \oplus [-4]\)  \\
		\midrule
		\(11\) & \(\U^{\oplus3}\) & \(\U^{\oplus2} \oplus [-4]\) \\
		\midrule
		\(12\) & \([2]^{\oplus3} \oplus  [-2]^{\oplus2}\) & \( [2] \oplus [-2]^{\oplus 2} \oplus  [4]\) \\
		\midrule
		\(13\) & \(\U\oplus [2]^{\oplus2}  \oplus [-2]\) & \( [2]^{\oplus2} \oplus [-2] \oplus [-4]\) \\
		\(14\) & \(\U  \oplus [2]^{\oplus2} \oplus [-2]\) & \(\U \oplus[-2] \oplus [4]\) \\
		\midrule
		\(15\) & \(\U^{\oplus2} \oplus [2]\) & \(\U \oplus[2] \oplus [-4]\) \\
		\midrule
		\(16\) & \(\U(2) \oplus [2]^{\oplus2}\) & \(\U(2) \oplus [4]\) \\
		\(17\) & \(\U(2) \oplus [2]^{\oplus 2}\) & \([2]^{\oplus2} \oplus [-4]\) \\
		\midrule
		\(18\) & \(\U \oplus [2]^{\oplus2}\) & \([2]^{\oplus2} \oplus [-4]\) \\
		\(19\) & \(\U \oplus [2]^{\oplus2}\) & \(\U \oplus [4]\) \\
		\midrule
		\(20\) & \([2]^{\oplus3}\) & \([2] \oplus [4]\) \\

	\end{longtable}	
\end{center}

Then for each \(S\) we obtain \(T= S^{\perp}\) by computing the primitive embeddings \(S \hookrightarrow \bL\) by \cite[Proposition 1.15.1]{Nik79}. All the pairs \((T,S)\) are summarized in \autoref{tab:invariante e coinvariante azione non banale su L}. Only when \(S\cong \U(2)^{\oplus2} \oplus [-4]\) and when \(S\cong \U \oplus [2] \oplus \A_3(-1)\) we do not find any primitive embedding \(S \hookrightarrow \bL\).

By \cite[Lemma 3.2]{GOV20} we know that if \(|G|=2\) and \(|G^{\sharp}|=2\) then \(|{\det (\bL_G)}|=|{\det (\bL^G)}|\). This result excludes the candidate cases 1, 2, 3, 5, 11, 13, 17, 19, 21, 23, 25, 27 of \autoref{tab:invariante e coinvariante azione non banale su L}.
\begin{lemma}\label{lemma2} In the case \(|G|=2\) and \(|G^{\sharp}|=2\), if \(|{\det (\bL_G)}|=|{\det( \bL^G)}|\) the gluing subgroup of \(\bL_G \hookrightarrow \bL\) contains no elements of order \(4\). 
\end{lemma}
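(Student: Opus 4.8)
The plan is to identify the gluing subgroup $K$ of the primitive embedding $\bL_G \hookrightarrow \bL$ with the glue group of the overlattice $\bL \supseteq \bL_G \oplus \bL^G$, and then to read off its group structure from \autoref{Lemma su sottogruppi di ordine p}. The subtlety to keep in mind — and the reason the statement is not vacuous — is that $K$ is a subgroup of $(\bL_G)^{\sharp}$, and in several of the candidate lattices of \autoref{tab:L_G} this discriminant group genuinely contains elements of order $4$ (produced by the summands $[\pm 4]$); the content of the lemma is that $K$ nonetheless meets none of them. This is the only real obstacle, and it is overcome by locating $K$ inside a purely $2$-torsion quotient.

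First I would recall the description of primitive embeddings from \autoref{Embeddings of lattices}. By \cite[Proposition 1.5.1]{Nik79}, the embedding $\bL_G \hookrightarrow \bL$ with orthogonal complement $\bL^G$ corresponds to the glue group $\Gamma = \bL/(\bL_G \oplus \bL^G) \subset (\bL_G)^{\sharp} \oplus (\bL^G)^{\sharp}$, which — the embedding being primitive — is the graph of an isometry from the gluing subgroup $K \subset (\bL_G)^{\sharp}$ onto a subgroup of $(\bL^G)^{\sharp}$. The projection of $\Gamma$ to the first factor is therefore a bijection onto $K$, so $K \cong \Gamma = \bL/(\bL_G \oplus \bL^G)$ as abelian groups. (The only point requiring care is this identification, which hinges on the primitivity of $\bL^G$, guaranteed because invariant lattices are primitive.)

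Next I would apply \autoref{Lemma su sottogruppi di ordine p} to the group $G$, which has prime order $p = 2$: it yields $\bL/(\bL_G \oplus \bL^G) \cong (\mathbb{Z}/2\mathbb{Z})^{a}$ for some $a \geq 0$. Combined with the previous step this gives $K \cong (\mathbb{Z}/2\mathbb{Z})^{a}$, so $K$ is $2$-elementary and every nonzero element of $K$ has order exactly $2$. In particular $K$ contains no element of order $4$, which is the assertion.

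Finally, to situate the hypothesis, I would note that the determinant formula of \autoref{Embeddings of lattices} reads $|{\det(\bL)}| = |{\det(\bL_G)}| \cdot |{\det(\bL^G)}| / |K|^2$; with $|{\det(\bL)}| = 4$ and the assumed equality $|{\det(\bL_G)}| = |{\det(\bL^G)}|$ (supplied in this setting by \cite[Lemma 3.2]{GOV20}) this pins down $|K| = |{\det(\bL_G)}|/2$. This is consistent with the $2$-elementary structure just obtained, and it is what turns the lemma into an effective test when sieving the candidate pairs of \autoref{tab:invariante e coinvariante azione non banale su L}.
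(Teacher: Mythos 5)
Your proof is correct, and it takes a genuinely different route from the paper's. You argue directly: by the correspondence recalled in \autoref{Embeddings of lattices}, and using primitivity of \(\bL^G\) (so that the glue group projects injectively to \((\bL_G)^{\sharp}\)), the gluing subgroup \(K\) is isomorphic as an abelian group to \(\bL/(\bL_G \oplus \bL^G)\), which is \((\mathbb{Z}/2\mathbb{Z})^{a}\) by \autoref{Lemma su sottogruppi di ordine p} with \(p=2\); in fact that lemma even records the natural embedding of \(\bL/(\bL^G \oplus \bL_G)\) into \((\bL_G)^{\sharp}\), which is exactly your identification. The paper instead argues by contradiction and uses the hypothesis \(|G^{\sharp}|=2\): if the gluing subgroup \(H\) contained an element of order \(4\), then the orthogonal complement \(H^{\perp}\oplus (H')^{\perp}\) inside \((\bL_G)^{\sharp}\oplus(\bL^G)^{\sharp}\) would consist only of \(2\)-torsion classes; since \(\bL^{\sharp}\cong \Gamma^{\perp}/\Gamma\) is generated by such classes, and the involution acts as \(-\id\) on \((\bL_G)^{\sharp}\) and as \(\id\) on \((\bL^G)^{\sharp}\) --- hence trivially on all \(2\)-torsion --- the induced action on \(\bL^{\sharp}\) would be trivial, contradicting \(|G^{\sharp}|=2\). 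Two points of comparison. First, your argument never invokes \(|G^{\sharp}|=2\) or \(|{\det(\bL_G)}|=|{\det(\bL^G)}|\), so you actually prove the stronger, unconditional statement that for \emph{any} order-\(2\) group the gluing subgroup is \(2\)-elementary; the hypotheses in the lemma are thus superfluous for its truth and only matter for its use as a sieve, where, as you observe, they pin down \(|K|=|{\det(\bL_G)}|/2\) and turn the lemma into an effective test on the candidates of \autoref{tab:invariante e coinvariante azione non banale su L}. Second, what the paper's longer route buys is that it makes visible the mechanism actually used in the case-by-case exclusions --- compatibility of a gluing with a nontrivial action on the discriminant group --- whereas your two-step argument is shorter, cleaner, and complete as a proof of the stated lemma.
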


\proof Let \(H\) be the gluing subgroup and suppose that it contains an element of order \(4\). If \(H'\) is the image of \(H\) in \((\bL^G)^{\sharp}\) then \(H \oplus H'\) contains an element of order \(4\) which is also the unique element of order \(4\) in \((\bL_G)^{\sharp} \oplus (\bL^G)^{\sharp}\). Hence \(H^{\perp}\oplus (H')^{\perp} \subset (\bL_G)^{\sharp} \oplus (\bL^G)^{\sharp}\) does not contain elements of order \(4\) and, in particular, it contains only elements of order two. Since \(\bL^{\sharp}\) is generated by elements in \(H^{\perp}\oplus (H')^{\perp}\) that are not in \(H \oplus H'\), \(G\) acts trivially on these elements and this is a contradiction.
\endproof

We compute the gluing subgroups for the possible \(S\) and \(T\) in \autoref{tab:invariante e coinvariante azione non banale su L} and \autoref{lemma2} excludes the cases 4, 6, 8, 9, 14, 15, 16, 18, 22. In case 25 we have \((S, T) \cong ([2]^{\oplus2}\oplus[-4],[4] \oplus \D_4(-1))\). These two pairs of lattices do not admit a gluing subgroup in \(\bL\) since the discriminant form of \(S^{\sharp}\) and the discriminant form of \(T(-1)^{\sharp}\) do not admit any isometric subgroup in both cases. In these two cases the lattices \((T,S)\) can not be invariant and coinvariant sublattices with respect to an action of a group \(G \subset \bO(\bL)\) such that \(|G|=2\) and \(|G^{\sharp}|=2\).
For the cases left i.e.\ 7, 10, 12, 20, 24, 26 we exhibit an isometry that generates a group \(G \subset \bO(\bL)\) such that ~\(|G|=2\), \(|G^{\sharp}|=2\) and it holds that \(\bL_G=S\) and \(\bL^G=T\), i.e. \((T,S)\) are invariant and coinvariant sublattices with respect to the action of \(G\) on \(\bL\). We denote these cases by \(\clubsuit\) in \autoref{tab:invariante e coinvariante azione non banale su L}. For these cases we give in \autoref{tab:H2 azione non banale su Ax} an example of ~ \(G\subset \bO(\bL)\) written with respect to the standard basis of \(\bL \cong \U^{\oplus 3} \oplus [-2]^{\oplus2}\) with ~\(|G|=2\), ~\(|G^{\sharp}|=2\) and \((\bL_G, \bL^G)\) as invariant and coinvariant sublattices \((T,S)\).

\begin{center}
	\begin{longtable}{llll}
		\caption{Orthogonal complements of \(S \hookrightarrow \bL\).}
		\label{tab:invariante e coinvariante azione non banale su L}\\
	
		\toprule
		No. & \(S\) & \(T\) & \(\clubsuit\) \\

		\midrule
		\endfirsthead
		
		\multicolumn{4}{c}%
		{\tablename\ \thetable{}, follows from previous page} \\
		\midrule
		No. & \(S\) & \(T\) & \(\clubsuit\) \\
		\midrule
		\endhead
		
		\midrule
		\multicolumn{4}{c}{Continues on next page} \\
		\endfoot
		
		\bottomrule
		\endlastfoot

		\(1\) & \(\U^{\oplus2}  \oplus [-2]^{\oplus2} \oplus [-4]\) & \([4]\) & --\\
		\(2\) & \(\U \oplus [2] \oplus [-2]^{\oplus2} \oplus [-4]\) & \([4] \oplus [-2]\) & --\\
		\(3\)  & \([2]^{\oplus2} \oplus [-2]^{\oplus2} \oplus [-4]\) & \([-2]^{\oplus 2} \oplus [4]\) & --\\
		\(4\) & \(\U \oplus [-2]^{\oplus2}  \oplus [4]\) & \([2] \oplus [-2]\oplus [-4]\) & --\\
		\(5\) & \(\U \oplus [-2]^{\oplus2}  \oplus [4]\) & \(\U\oplus [-4]\) &--\\
		\(6\) & \(\U \oplus [-2]^{\oplus2}  \oplus [4]\) & \(\U(2) \oplus [-4]\) &--\\
		\(7\) & \(\U \oplus \U(2) \oplus [-4]\) & \(\U(2) \oplus [-4]\) & \(\clubsuit\) \\
		\(8\) & \(\U \oplus \U(2) \oplus [-4]\) & \([-2]^{\oplus2} \oplus [4]\) & -- \\
		\(9\) & \(\U \oplus [2] \oplus [-2] \oplus [-4]\) & \([-2]^{\oplus 2} \oplus [4]\) & -- \\
		\(10\) & \(\U \oplus [2] \oplus [-2] \oplus [-4]\) & \([2] \oplus [-2] \oplus [-4]\) & \(\clubsuit\) \\
		\(11\) & \(\U^{\oplus2} \oplus [-4]\) & \([-2]^{\oplus2} \oplus [4]\) & -- \\
		\(12\) & \(\U^{\oplus2} \oplus [-4]\) & \(\U \oplus [-4]\) & \(\clubsuit\) \\
		\(13\) & \([2] \oplus [-2]^{\oplus 2} \oplus [4]\) & \(\U \oplus [-2] \oplus [-4]\) & -- \\
		\(14\) & \([2] \oplus [-2]^{\oplus 2} \oplus [4]\) & \([2] \oplus [-2]^{\oplus2} \oplus [-4]\) & --\\
		\(15\) & \([2]^{\oplus2} \oplus [-2]  \oplus [-4]\) & \([-2]^{\oplus3} \oplus [4]\) & --\\
		\(16\) & \([2]^{\oplus2} \oplus [-2] \oplus [-4]\) & \(\U \oplus [-2] \oplus [-4]\) & --\\
		\(17\) & \(\U \oplus[-2] \oplus [4]\) & \( [2] \oplus [-2]^{\oplus2} \oplus [-4]\) & --\\
		\(18\) & \(\U \oplus[-2] \oplus [4]\) & \(\U \oplus [-2] \oplus [-4]\) & --\\
		\(19\) & \(\U \oplus[2] \oplus [-4]\) & \([-2]^{\oplus 3} \oplus [4]\) & --\\
		\(20\) & \(\U \oplus[2] \oplus [-4]\) & \(\U \oplus [-2] \oplus [-4]\) & \(\clubsuit\) \\
		\(21\) & \(\U(2) \oplus [4]\) & \(\U(2) \oplus[-2]^{\oplus2} \oplus[-4]\) & --\\
		\(22\) & \(\U(2) \oplus [4]\) & \(\U\oplus [-2]^{\oplus2} \oplus [-4]\) & --\\
		\(23\) & \([2]^{\oplus2} \oplus [-4]\) & \([-2]^{\oplus4} \oplus [4]\) & --\\
		\(24\) & \([2]^{\oplus2} \oplus [-4]\) & \(\U\oplus [-2]^{\oplus2} \oplus [-4]\) & \(\clubsuit\) \\
		\(25\) & \(\U \oplus [4]\) & \(\U \oplus[-2]^{\oplus2} \oplus [-4]\) & --\\
		\(26\) & \(\U \oplus [4]\) & \(\U \oplus \A_{3}(-1)\) & \(\clubsuit\) \\
		\(27\) & \([2] \oplus [4]\) & \(\U \oplus [-2]^{\oplus3} \oplus[-4]\) & --\\

	\end{longtable}	
\end{center}

\begin{center}
	\begin{longtable}{llll}
			\caption{Invariant and coinvariant sublattices of nonsymplectic groups \(G \subset \bO(\bL)\) of order \(2\) and \(|G^{\sharp}|=2\) on manifolds of \(\OG6\) type.} \\
           \label{tab:H2 azione non banale su Ax} \\
	
		\toprule
		No. & \(\bL_G\) & \(\bL^G\) & example  \\

		\midrule
		\endfirsthead
		
		\multicolumn{4}{c}%
		{\tablename\ \thetable{}, follows from previous page} \\
		\midrule
		No. & \(\bL_G\) & \(\bL^G\) & example \\
		\midrule
		\endhead
		
		\midrule
		\multicolumn{4}{c}{Continues on next page} \\
		\endfoot
		
		\bottomrule
		\endlastfoot

		\(1\) & \(\U \oplus \U(2) \oplus [-4]\) & \(\U(2) \oplus [-4]\) & {\tiny \(\begin{pmatrix} -1 & 0 & 0 & 0 & 0 & 0 & 0 & 0 \\ 0 & -1 & 0 & 0 & 0 & 0 & 0 & 0 \\ 0 & 0 & 0 & 0 & 1 & 0 & 0 & 0 \\ 0 & 0 & 0 & 0 & 0 & 1 & 0 & 0 \\ 0 & 0 & 1 & 0 & 0 & 0 & 0 & 0 \\ 0 & 0 & 0 & 1 & 0 & 0 & 0 & 0 \\ 0 & 0 & 0 & 0 & 0 & 0 & 0 & 1 \\ 0 & 0 & 0 & 0 & 0 & 0 & 1 & 0 \end{pmatrix}\)} \\
		\(2\) & \(\U \oplus [2] \oplus [-2] \oplus [-4]\) & \([2] \oplus [-2] \oplus [-4]\) &   {\tiny \(\begin{pmatrix} -1 & 0 & 0 & 0 & 0 & 0 & 0 & 0 \\ 0 & -1 & 0 & 0 & 0 & 0 & 0 & 0 \\ 0 & 0 & 0 & 1 & 0 & 0 & 0 & 0 \\ 0 & 0 & 1 & 0 & 0 & 0 & 0 & 0 \\ 0 & 0 & 0 & 0 & 0 & -1 & 0 & 0 \\ 0 & 0 & 0 & 0 & -1 & 0 & 0 & 0 \\ 0 & 0 & 0 & 0 & 0 & 0 & 0 & 1 \\ 0 & 0 & 0 & 0 & 0 & 0 & 1 & 0 \end{pmatrix}\)}  \\
		\(3\) & \(\U^{\oplus2} \oplus [-4]\) & \(\U \oplus [-4]\) &  {\tiny \(\begin{pmatrix} -1 & 0 & 0 & 0 & 0 & 0 & 0 & 0 \\ 0 & -1 & 0 & 0 & 0 & 0 & 0 & 0 \\ 0 & 0 & -1 & 0 & 0 & 0 & 0 & 0 \\ 0 & 0 & 0 & -1 & 0 & 0 & 0 & 0 \\ 0 & 0 & 0 & 0 & 1 & 0 & 0 & 0 \\ 0 & 0 & 0 & 0 & 0 & 1 & 0 & 0 \\ 0 & 0 & 0 & 0 & 0 & 0 & 0 & 1 \\ 0 & 0 & 0 & 0 & 0 & 0 & 1 & 0 \end{pmatrix}\)} \\
	   \(4\) & \(\U \oplus[2] \oplus [-4]\) & \(\U \oplus [-2] \oplus [-4]\) & {\tiny \(\begin{pmatrix} 1 & 0 & 0 & 0 & 0 & 0 & 0 & 0 \\ 0 & 1 & 0 & 0 & 0 & 0 & 0 & 0 \\ 0 & 0 & -1 & 0 & 0 & 0 & 0 & 0 \\ 0 & 0 & 0 & -1 & 0 & 0 & 0 & 0 \\ 0 & 0 & 0 & 0 & 0 & -1 & 0 & 0 \\ 0 & 0 & 0 & 0 & -1 & 0 & 0 & 0 \\ 0 & 0 & 0 & 0 & 0 & 0 & 0 & 1 \\ 0 & 0 & 0 & 0 & 0 & 0 & 1 & 0 \end{pmatrix}\)} \\
	    \(5\) & \([2]^{\oplus2} \oplus [-4]\) & \(\U\oplus [-2]^{\oplus2} \oplus [-4]\) & {\tiny \(\begin{pmatrix} 1 & 0 & 0 & 0 & 0 & 0 & 0 & 0 \\ 0 & 1 & 0 & 0 & 0 & 0 & 0 & 0 \\ 0 & 0 & 0 & -1 & 0 & 0 & 0 & 0 \\ 0 & 0 & -1 & 0 & 0 & 0 & 0 & 0 \\ 0 & 0 & 0 & 0 & 0 & -1 & 0 & 0 \\ 0 & 0 & 0 & 0 & -1 & 0 & 0 & 0 \\ 0 & 0 & 0 & 0 & 0 & 0 & 0 & 1 \\ 0 & 0 & 0 & 0 & 0 & 0 & 1 & 0 \end{pmatrix}\)} \\
		\(6\) & \(\U \oplus [4]\) & \(\U \oplus \A_{3}(-1)\) &  {\tiny \(\begin{pmatrix} 1 & 0 & 0 & 0 & 0 & 0 & 0 & 0 \\ 0 & 1 & 0 & 0 & 0 & 0 & 0 & 0 \\ 0 & 0 & -1 & 0 & 0 & 0 & 0 & 0 \\ 0 & 0 & 0 & -1 & 0 & 0 & 0 & 0 \\ 0 & 0 & 0 & 0 & -1 & -2 & -1 & -1 \\ 0 & 0 & 0 & 0 & -2 & -1 & -1 & -1 \\ 0 & 0 & 0 & 0 & 2 & 2 & 2 & 1 \\ 0 & 0 & 0 & 0 & 2 & 2 & 1 & 2 \end{pmatrix}\)} \\
\end{longtable}
\end{center}	

\subsection{Order 3, 5, 7.} \label{ord357}
If \(G \subset \bO(\bL)\) and \(|G|=p\) where \(p \geq 3\) we have \(|G^{\sharp}|=1\) so by \autoref{signature of SgX and TgX trivial} we have \(\bL_G \cong \bLambda_G\) and \(\bL^G \cong \bR^{\perp} \subset \bLambda^G\). 
\begin{proposition}\label{prop p >=3}
		If \(S\) is a \(p\)-elementary lattice with \(p \in \{3,5,7\}\) and if there exists a primitive embedding \(S \hookrightarrow \bL\) then the embedding is unique up to isometry of \(\bL\).
\end{proposition}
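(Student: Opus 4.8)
The plan is to combine Nikulin's classification of primitive embeddings \cite[Proposition 1.15.1]{Nik79} with the arithmetic incompatibility between the discriminant groups of \(\bL\) and of \(S\). Recall that \(\bL^{\sharp}\cong(\mathbb{Z}/2\mathbb{Z})^{\oplus2}\) is \(2\)-elementary, while \(S^{\sharp}\cong(\mathbb{Z}/p\mathbb{Z})^{\oplus a}\) is \(p\)-elementary with \(p\) odd. By \cite[Proposition 1.15.1]{Nik79} a primitive embedding \(S\hookrightarrow\bL\) with orthogonal complement \(T=S^{\perp}\) is encoded by an embedding subgroup \(H\subset\bL^{\sharp}\) together with an embedding isometry \(\gamma\colon H\to H'\subset S^{\sharp}\). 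The first step is to observe that \(\gamma\) is in particular a group isomorphism between a subgroup of a \(2\)-group and a subgroup of a \(p\)-group; since \(\gcd(2,p)=1\) this forces \(H=H'=\{0\}\). Thus the gluing is trivial and the discriminant form of the complement is determined, namely \(q_T\cong q_{\bL}\oplus(-q_S)\), with \(\Sgn(T)=\Sgn(\bL)-\Sgn(S)=(1,7-\rk(S))\) for the lattices \(S\) of \autoref{tab:Lambda}, all of which have \(\Sgn(S)=(2,\rk(S)-2)\).

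Once the gluing is pinned down, the only remaining freedom in the embedding up to \(\bO(\bL)\) is the isometry class of \(T\) together with the identification \(q_T\cong q_{\bL}\oplus(-q_S)\); the latter is absorbed, again by \cite[Proposition 1.15.1]{Nik79}, as soon as \(\bO(T)\to\bO(T^{\sharp})\) and \(\bO(\bL)\to\bO(\bL^{\sharp})\) are surjective. Surjectivity of \(\bO(\bL)\to\bO(\bL^{\sharp})\) holds by \cite[Theorem 1.14.2]{Nik79} since \(\bL\) is indefinite with \(\rk(\bL)=8\ge l(\bL^{\sharp})+2\). Hence the second step reduces the statement to showing that \(T\) is unique in its genus and that \(\bO(T)\to\bO(T^{\sharp})\) is surjective.

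The third step is a finite verification over the \(p\)-elementary lattices \(S\) occurring in \autoref{tab:Lambda} for \(p\in\{3,5,7\}\). Here \(\rk(T)=8-\rk(S)\) and \(T^{\sharp}\cong(\mathbb{Z}/2\mathbb{Z})^{\oplus2}\oplus(\mathbb{Z}/p\mathbb{Z})^{\oplus a}\), so \(l(T^{\sharp})=\max(2,a)\). Whenever \(\rk(T)\ge l(T^{\sharp})+2\) the lattice \(T\) is indefinite and \cite[Theorem 1.14.2]{Nik79} yields at once both its uniqueness in the genus and the surjectivity of \(\bO(T)\to\bO(T^{\sharp})\); this settles every \(S\) of rank at most \(4\), as well as \(S=\A_2\).

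The main obstacle is the family of lattices \(S\) of rank \(6\), for which \(\rk(T)=2\) and the numerical criterion \(\rk(T)\ge l(T^{\sharp})+2\) breaks down. I would dispose of these by hand. A rank-\(2\) lattice has discriminant group generated by at most two elements, so whenever the forced form \(q_{\bL}\oplus(-q_S)\) needs three generators the complement cannot exist: this happens for \(S=\U\oplus\U(3)\oplus\A_2(-1)\) (where \(a=3\)), so that no primitive embedding \(S\hookrightarrow\bL\) exists and the statement holds vacuously. In the two borderline cases \(S=\U^{\oplus2}\oplus\A_2(-1)\) and \(S=\U^{\oplus2}\oplus\K_7\), where \(\rk(T)=2=l(T^{\sharp})\), I would write \(T\) down explicitly as an even indefinite binary lattice of signature \((1,1)\) and of the prescribed determinant and discriminant form, and then check directly, through the classical reduction theory of indefinite binary forms, that its genus consists of a single class and that \(\bO(T)\twoheadrightarrow\bO(T^{\sharp})\). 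This binary genus computation, outside the range of Nikulin's general criterion, is the delicate point; with it in hand, \cite[Proposition 1.15.1]{Nik79} gives the asserted uniqueness of the primitive embedding.
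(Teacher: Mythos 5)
Your first step is exactly the paper's proof: since \(\bL^{\sharp}\cong(\mathbb{Z}/2\mathbb{Z})^{\oplus2}\) is a \(2\)-group while \(S^{\sharp}\) is a \(p\)-group with \(p\) odd, the embedding subgroup of \cite[Proposition 1.15.1]{Nik79} must vanish, so the gluing is trivial and \(q_{S^{\perp}}\cong q_{\bL}\oplus(-q_{S})\) is forced. The paper stops there and simply concludes by \cite[Proposition 1.15.1]{Nik79}. You go further, and rightly so: in Nikulin's classification an embedding is encoded not only by the gluing data but also by the isometry class of \(T=S^{\perp}\) together with the identification \(q_{T}\cong q_{\bL}\oplus(-q_{S})\), so uniqueness up to \(\bO(\bL)\) additionally requires \(T\) to be unique in its genus and \(\bO(T)\to\bO(T^{\sharp})\) to be surjective (what is really needed is surjectivity onto the automorphisms of the \(p\)-primary part, the \(2\)-primary ambiguity being absorbed on the \(\bL\)-side). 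The paper's two-line proof silently relies on this, so your version is a genuine tightening of the written argument. Your bookkeeping is also correct: \(l(T^{\sharp})=\max(2,a)\) because the \(2\)- and \(p\)-parts are coprime, \cite[Theorem 1.14.2]{Nik79} then covers every table entry with \(\rk(T)\geq 4\) (including \(\U\oplus\U(3)\) and \(\U\oplus\h_{5}\)), and the nonexistence of any embedding of \(\U\oplus\U(3)\oplus\A_{2}(-1)\), where \(l(T^{\sharp})=3>2=\rk(T)\), agrees with what the paper finds in \autoref{ord357}.

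Two caveats. First, your verification runs only over the lattices of \autoref{tab:Lambda}, whereas the proposition as stated concerns \emph{every} \(p\)-elementary \(S\) admitting a primitive embedding into \(\bL\); the paper's argument, whatever its gaps, is uniform in \(S\), while yours is not (for general \(S\) the complement need not have signature \((1,1)\), and your rank-\(2\) discussion assumes it does). Second, the two borderline cases are precisely where your added requirement has content, and you leave them as a plan ("I would check directly") rather than a proof. The facts are true and the check is short, so it should be carried out: for \(T\cong[-2]\oplus[6]\) with basis \(u,w\), one has \(\bO(T^{\sharp})\cong\mathbb{Z}/2\mathbb{Z}\times\mathbb{Z}/2\mathbb{Z}\), and the Pell isometry \(u\mapsto 2u+w\), \(w\mapsto 3u+2w\), together with \(-\id\), hits all of it; moreover every even lattice of signature \((1,1)\) and determinant \(-12\) is \(2\)-times a primitive binary form of discriminant \(12\), so the only classes are \([2]\oplus[-6]\) and \([-2]\oplus[6]\), whose discriminant forms already differ on the \(3\)-part, whence the genus of \([-2]\oplus[6]\) has a single class. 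The same argument with the Pell isometry \(u\mapsto 8u+3w\), \(w\mapsto 21u+8w\) disposes of \(T\cong[-2]\oplus[14]\). With these verifications inserted, and the scope either restricted to \autoref{tab:Lambda} or extended to arbitrary \(S\), your argument is complete and in fact more solid than the one printed in the paper.
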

\proof
If \(S \hookrightarrow \bL\) is a primitive embedding then the embedding subgroup \(H\) is such that \(H \subset S^{\sharp} \cong (\mathbb{Z}/p\mathbb{Z})^{\oplus a}\) and \(H \subset \bL^{\sharp} \cong (\mathbb{Z}/2\mathbb{Z})^{\oplus2}\). Since \(p \geq 3\) then \(H= \{\id\}\) hence the isometry \(\gamma=\id\) which means that \(\Gamma^{\perp}/ \Gamma \cong (S^{\perp})^{\sharp} \cong S^{\sharp} \oplus \bL^{\sharp}\) and \(q_{S^{\perp}} \cong q_{\bL} -q_{S}\). By \cite[Proposition 1.15.1]{Nik79} we conclude. 
\endproof
We know that \(\bL \hookrightarrow \bLambda\) is an embedding such that if \(G \subset \bO(\bL)\) and \(|G|=p\) with ~\(p \geq 3\) then \(|G^{\sharp}|=1\). Due to this fact it is possible to extend the action on \(\bLambda\) trivially on ~ \(\bL_G^{\perp} \cong \bL^G\) which means that  \(\bL_G \cong \bLambda_G\) is a \(p\)-elementary lattice. We compute an orthogonal complement \(\bL_G^{\perp}\) of a  primitive embedding \(\bL_G \hookrightarrow \bL\) and by \autoref{prop p >=3} all the other primitive embeddings are equivalent with respect to the level \((1)\) of the classification described in \autoref{Automorphisms of irreducible holomorphic symplectic manifolds}.

\begin{remark}
	 In cases \(|G|=3\) and \(|G|=7\) we can use \cite[Theorem 2.9]{BPV} to show that there is a unique embedding \(\bL_G \hookrightarrow \bL\) up to isometry of \(\bL\). In fact we have \(\rk(\bL_G) \leq 2\) (or \(\rk(\bL^G)\leq 2\)) and \(\U^{\oplus3} \subset \bL\). We obtain the level \((1)\) of the classification explained in \autoref{Automorphisms of irreducible holomorphic symplectic manifolds}.
\end{remark}

If \(|G|=3\) the only possible coinvariant sublattices that we find in \autoref{tab:Lambda} are ~ \(\bL_G \cong \U^{\oplus2} \oplus \A_2(-1)\), \(\bL_G\cong\U \oplus \A_{2}(-1) \oplus \U(3)\), \(\bL_G\cong\U^{\oplus2}\), \(\bL_G\cong\U \oplus \U(3)\) and \(\bL_G \cong \A_2\). By \autoref{prop p >=3} we know that the primitive embedding ~ \(\bL_G \hookrightarrow \bL\), if there exists, is unique up to isometry of \(\bL\). The embedding is given by choosing \(H=\{\id\}\) as embedding subgroup. If \(\bL_G \cong \U^{\oplus2} \oplus \A_2(-1)\) then \(\bL^G \cong [-2] \oplus [6]\). If \(\bL_G\cong\U \oplus \A_{2}(-1) \oplus \U(3)\) then there are no primitive embeddings of this lattice in \(\bL\); in fact if such an embedding exists, then the length of the discriminant group of the orthogonal complement is greater than the rank. If \(\bL_G\cong\U^{\oplus2}\) then \(\bL^{G}\cong\U \oplus [-2]^{\oplus2}\). If \(\bL_G\cong\U \oplus \U(3)\) then \(\bL^G\cong\U(3) \oplus [-2]^{\oplus 2}\) and if \(\bL_G \cong \A_2\) then ~ \(\bL^G \cong \U \oplus \A_2(-1) \oplus [-2]^{\oplus2}\).

If \(|G|=5\) the only possible coinvariant sublattice that we find in \autoref{tab:Lambda} is \(\bL_G \cong \U \oplus \h_5\). By \autoref{prop p >=3} we know that the primitive embedding \(\bL_G \hookrightarrow \bL\) is unique up to isometry of \(\bL\). The embedding is given by choosing the trivial embedding subgroup \(H=\{\id\}\) and we obtain \(\bL^G \cong [-2] \oplus [-10] \oplus \U\).

If \(|G|=7\) the only possible coinvariant sublattice that we find in \autoref{tab:Lambda} is \(\bL_G \cong \U \oplus \K_7\). By \autoref{prop p >=3} we know that the primitive embedding \(\bL_G \hookrightarrow \bL\) is unique up to isometry of \(\bL\). The embedding is given by choosing the trivial embedding subgroup \(H=\{\id\}\) and we obtain \(\bL^G \cong [-2] \oplus [14]\).

\subsection{Proof of \autoref{thm resume}}\label{proof of thoerem 1.2}
The computations in \autoref{ord2trivial}, \autoref{ord2nt}, \autoref{ord357} are the proof of \autoref{thm resume}. In fact the pairs \((\bL^G, \bL_G)\) are invariant and coinvariant sublattices with respect to effective nonsymplectic groups \(G \subset \bO(\bL)\) of prime order on a manifold \(X\) of \(\OG6\) type. Then there exist \(G'\subset \Aut(X)\) such that \(G=\eta_{*}(G')\) where \(\eta_{*}\) is the representation map recalled in equation \eqref{main rep map}. In this way nonsymplectic groups \(G \subset \Aut(X)\) of prime order on manifolds \(X\) of \(\OG6\) type are completely classified.
\endproof

\begin{center}
	\begin{longtable}{lllll}
		
	\caption{Invariant and coinvariant sublattices of effective nonsymplectic groups \(G \subset \bO(\bL)\) on manifolds of \(\OG6\) type.}
		\label{tab:L} \\
		
		\toprule
	No. & \(|G|\) & \(|G^{\sharp}|\) & \(\bL_G\) & \(\bL^G\)  \\

	\midrule
	\endfirsthead
	
	\multicolumn{5}{c}%
	{\tablename\ \thetable{}, follows from previous page} \\
	\midrule
		No. & \(|G|\) & \(|G^{\sharp}|\) & \(\bL_G\) & \(\bL^G\)  \\
	\midrule
	\endhead
	
	\midrule
	\multicolumn{5}{c}{Continues on next page} \\
	\endfoot
	
	\bottomrule
	\endlastfoot
	
	\(1\) & \(2\) & \(1\) & \(\U^{\oplus 2} \oplus [ -2 ]^{\oplus 3 }\)& \([2]\) \\
	\(2\) & \(2\)  & \(1\) & \(\U \oplus [2] \oplus [-2] ^{\oplus 3 }\) & \([2] \oplus [-2]\) \\
	\(3\) & \(2\) & \(1\) & \(\U^{\oplus 2} \oplus [-2 ] ^{\oplus 2 }\) & \(\U\) \\
	\(4\) & \(2\) & \(1\)  & \(\U^{\oplus 2} \oplus [-2 ] ^{\oplus 2 }\) & \([ 2 ] \oplus [-2]\) \\
	\(5\) & \(2\) & \(1\) & \(\U^{\oplus 2} \oplus [-2 ] ^{\oplus 2 }\) & \(\U(2)\) \\
	\(6\) & \(2\) & \(1\)  & \([2]^{\oplus2} \oplus [-2]^{\oplus3}\) & \([2] \oplus[-2]^{\oplus2}\) \\
	\(7\) & \(2\) & \(1\) & \(\U \oplus [-2] ^{\oplus 2 } \oplus [2]\) & \(\U \oplus [-2]\) \\
	\(8\) & \(2\) & \(1\) & \(\U \oplus [-2] ^{\oplus 2 } \oplus [2]\) & \([2] \oplus [-2]^{\oplus 2}\)  \\
	\(9\) & \(2\) & \(1\) & \(\U^{\oplus 2} \oplus [-2]\) & \([-2] ^{\oplus 2} \oplus [2]\) \\
	\(10\) & \(2\) & \(1\) & \(\U^{\oplus 2} \oplus [-2]\) & \(\U \oplus [-2]\) \\
	\(11\) & \(2\) & \(1\) & \([2] ^{\oplus 2} \oplus [-2] ^{\oplus 2}\) & \(\U \oplus [-2] ^{\oplus 2}\) \\
  \(12\) & \(2\) & \(1\)  & \([2] ^{\oplus 2} \oplus[-2] ^{\oplus 2}\) & \([-2] ^{\oplus 3} \oplus [2]\)  \\
	\(13\) & \(2\) & \(1\) & \(\U(2)^{\oplus 2}\)  & \(\U(2) \oplus [-2]^{\oplus2}\) \\
			\(14\) & \(2\) & \(1\)  & \(\U \oplus [2] \oplus [-2]\) & \([2] \oplus [-2]^{\oplus 3}\)  \\
			\(15\) & \(2\) & \(1\) & \(\U \oplus [2] \oplus [-2]\) & \(\U \oplus [-2] ^{\oplus 2}\) \\
			
			\(16\) & \(2\) & \(1\) & \(\U \oplus \U(2)\) & \(\U(2) \oplus [-2] ^{\oplus 2}\)  \\
			\(17\) & \(2\) & \(1\) & \(\U \oplus \U(2)\) & \(\U \oplus [-2] ^{ \oplus 2}\)  \\
			
			\(18\) & \(2\) & \(1\) & \(\U ^{\oplus 2}\)& \(\U \oplus [-2] ^{\oplus 2}\)  \\
			
			\(19\) & \(2\) & \(1\)  & \([2]^{\oplus 2} \oplus [-2]\) & \([-2]^{\oplus 4} \oplus [2]\) \\
			\(20\) & \(2\) & \(1\) & \([2]^{\oplus 2} \oplus [-2]\) & \(\U \oplus [-2]^{\oplus 3}\) \\
			
			\(21\) & \(2\) & \(1\) & \(\U \oplus [2]\) & \(\U \oplus [-2] ^{\oplus 3}\)  \\
			
			\(22\) & \(2\) & \(1\) & \([2]^{ \oplus 2}\)& \(\U \oplus[-2] ^{\oplus 4}\) \\
			\(23\) & \(2\) & \(1\) & \([2]^{ \oplus 2}\)& \(\U \oplus \D_{4}(-1)\)  \\
			\(24\) & \(2\)  & \(1\) & \([2]^{\oplus2}\) & \(\U(2) \oplus \D_{4}(-1)\)  \\
			\midrule 
	
		\(1\) & \(2\)  & \(2\) & \(\U \oplus \U(2) \oplus [-4]\) & \(\U(2) \oplus [-4]\) \\
			\(2\) & \(2\)  & \(2\) & \(\U \oplus [2] \oplus [-2] \oplus [-4]\) & \([2] \oplus [-2] \oplus [-4]\) \\
			\(3\) & \(2\)  & \(2\) & \(\U^{\oplus2} \oplus [-4]\) & \(\U \oplus [-4]\) \\
			\(4\) & \(2\)  & \(2\) & \(\U \oplus [2] \oplus [-4]\) & \(\U \oplus [-2] \oplus [-4]\) \\
			\(5\) & \(2\)  & \(2\) & \([2]^{\oplus2}\oplus [-4]\) & \(\U \oplus [-2]^{\oplus2} \oplus [-4]\) \\
			\(6\) & \(2\)  & \(2\) & \(\U \oplus [4]\) & \(\U \oplus \A_3(-1)\) \\
			\midrule
			
			\(1\) & \(3\) & \(1\) & \(\U^{\oplus2} \oplus \A_{2}(-1)\) & \([-2] \oplus [6]\) \\
			\(2\)  & \(3\) & \(1\) & \(\U^{\oplus2}\) & \(\U \oplus[-2]^{\oplus2}\) \\
			\(3\) & \(3\) & \(1\) & \(\U \oplus \U(3)\) & \(\U(3) \oplus [-2]^{\oplus2}\) \\
			\(4\) & \(3\) & \(1\) & \(\A_{2}\) & \(\U \oplus \A_{2}(-1) \oplus [-2]^{\oplus 2}\) \\
			\midrule

			\(1\) & \(5\) & \(1\) & \(\U \oplus \h_{5}\) & \([-2] \oplus [-10] \oplus \U\) \\
			\midrule
			
			\(1\) & \(7\) & \(1\) & \(\U^{\oplus 2} \oplus \K_{7}\) & \([-2] \oplus [14]\)  \\
			
	\end{longtable}
\end{center}

\bibliographystyle{amsplain}
\bibliography{Biblio}
\end{document}